\newtheorem{thm}{Theorem}
\newtheorem{lem}[thm]{Lemma}
\newtheorem{prop}[thm]{Proposition}
\newtheorem{cor}[thm]{Corollary}
\newtheorem{defe}[thm]{Definition}
\theoremstyle{remark}
\newtheorem{rem}[thm]{Remark}
\newtheorem{exam}[thm]{Example}
\newtheorem{ass}[thm]{Assumption}
\newtheorem{ntn}[thm]{Notation}
\newcommand\myurl[1]{\url{#1}}
\newcommand{\nc}{\newcommand}
\nc{\ssec}{\subsection}
\nc{\on}{\operatorname}
\nc {\cA}{\mathcal{A}}
\nc {\cG} {\mathcal{G}}
\nc {\cK}{\mathcal{K}}
\nc {\cC} {\mathcal{C}}
\nc {\cL} {\mathcal{L}}
\nc {\cE} {\mathcal{E}}
\nc {\cM} {\mathcal{M}}
\nc {\cO}{\mathcal{O}}
\nc {\cF}{\mathcal{F}}
\nc {\cZ}{\mathcal{Z}}
\nc {\bZ}{\mathbb{Z}}
\nc {\uG} {\underline{G}}
\nc {\cB}{\mathcal{B}}
\nc{\rat}{\mathrm{rat}}
\nc {\fk}{\mathfrak{k}}
\nc {\fI}{\mathfrak{i}}
\nc {\fg} {\mathfrak{g}}
\nc {\fl} {\mathfrak{l}}
\nc {\fn} {\mathfrak{n}}
\nc {\cP} {\mathcal{P}}
\nc {\fz} {\mathfrak{z}}
\nc {\fc}{\mathfrak{c}}
\nc {\fd}{\mathfrak{d}}
\nc {\fh}{\mathfrak{h}}
\nc {\fp}{\mathfrak{p}}
\nc {\ft}{\mathfrak{t}}
\nc{\tg} {\mathtt{g}}
\nc {\tP}{\mathcal{P}}
\nc {\hfg} {\widehat{\fg}}
\nc {\hG} {\check{G}}
\nc {\bGm} {\mathbb{G}_m}
\nc{\bC}{\mathbb{C}}
\nc{\bV}{\mathbb{V}}
\nc{\bP}{\mathbb{P}}
\nc{\bA}{\mathbb{A}}
\nc {\bQ}{\mathbb{Q}}
\nc {\bR}{\mathbb{R}}
\nc{\Sl}{\mathfrak{sl}}
\nc{\Gl}{\mathfrak{gl}}
\nc {\So}{\mathfrak{so}}
\nc{\ra}{\rightarrow}
\nc {\tU}{\tilde{U}}
\nc {\tSym}{\widetilde{Sym}}
\nc {\Bun}{\mathrm{Bun}}
\nc {\Fun}{\mathrm{Fun}}
\nc {\crit}{\mathrm{crit}}
\nc {\Ind}{\mathrm{Ind}}
\nc {\Vac}{\mathrm{Vac}}
\nc {\gr}{\mathrm{gr}}
\nc {\ad}{\mathrm{ad}}
\nc {\Sym}{\mathrm{Sym}}
\nc {\Ram}{\mathrm{Ram}}
\nc {\Op}{\mathrm{Op}}
\nc {\Hitch}{\mathrm{Hitch}}
\nc {\fb}{\mathfrak{b}}
\nc{\cDt}{\mathcal{D}^\times}
\nc{\tp}{\mathtt{p}}
\nc {\tk}{\mathtt{k}}
\nc{\cN}{\mathcal{N}}
\nc {\cY}{\mathcal{Y}}
\nc {\loc}{\mathrm{loc}}
\nc {\Res}{\mathrm{Res}}
\nc {\Hom}{\mathrm{Hom}}
\nc {\End}{\mathrm{End}}
\nc {\Perm}{\mathrm{Perm}}
\nc{\Dec}{\mathrm{Dec}}
\nc {\reg}{\mathrm{reg}}
\nc {\GL}{\mathrm{GL}}
\nc{\SL}{\mathrm{SL}}
\nc{\fm}{\mathfrak{m}}
\nc {\fa}{\mathfrak{a}}
\nc {\Spec}{\mathrm{Spec}}
\nc {\cD}{\mathcal{D}}
\nc {\nilp}{\mathrm{nilp}}
\nc {\tN}{\widetilde{\cN}}
\nc {\tO}{\widetilde{\cO}}
\nc {\Sp}{\mathrm{Sp}}
\nc{\cV}{\mathcal{V}}
\nc {\RS}{\mathrm{RS}}
\nc {\ord}{\mathrm{ord}}
\nc {\fu}{\mathfrak{u}}
\nc {\sign}{\mathrm{sign}}
\begin{document} 
\title{On the image of the parabolic Hitchin map} 

\begin{abstract} 
We determine the image of the  (strongly) parabolic Hitchin map for all parabolics in classical groups and $G_2$. Surprisingly, we find that the image is isomorphic to an affine space in all cases, except for certain ``bad parabolics'' in type $D$, where the image can be singular. \end{abstract} 

\author{David Baraglia} 
\author{Masoud Kamgarpour}

\subjclass[2010]{17B67, 17B69, 22E50, 20G25}

\address{Department of Mathematics, University of Adelaide} 
\email{david.baraglia@adelaide.edu.au}

\address{School of Mathematics and Physics, The University of Queensland} 
\email{masoud@uq.edu.au}

\maketitle

\tableofcontents

\section{Introduction} \label{s:intro}

The Hitchin map on the moduli space of Higgs bundles \cite{Hitchin1, Hitchin} lies at the crossroad of geometry, Lie theory, and mathematical physics. It has found remarkable applications in non-abelian Hodge theory \cite{Simpson90, Simpson} and the Langlands program \cite{BD, Ngo}. The original construction of Hitchin was on compact Riemann surfaces. The generalisation to non-compact surfaces has been developed in, e.g., \cite{SimpsonNonCompact, Boalch, BKV}. The relevant objects in the non-compact case are \emph{parabolic} Higgs bundles.\footnote{To be more precise, one should consider \emph{parahoric Higgs bundles}; cf. \cite{BKV}. In this text, however, we restrict ourselves to the parabolic case.}
The aim of this paper is to determine the image of the Hitchin map  on the moduli space of parabolic Higgs bundles.  We start by stating a ``local version'' of our result, since this can be formulated in standard Lie-theoretic language. 
 
\subsection{Main local result}\label{s:mainlocal}
Let $G$ be a simple simply connected algebraic group over $\bC$ and let $\fg$ denote its Lie algebra. 
According to a theorem of Chevalley, the invariant ring $\bC[\fg]^G$ is isomorphic to a graded polynomial ring with homogenous generators $Q_1,\cdots, Q_n$. These generators are not canonical; however, their degrees $d_1, \cdots , d_n$ are, up to re-ordering, canonical invariants of $\fg$ called the \emph{fundamental degrees}.

Let $\cO=\bC[\![t]\!]$ denote the local ring of formal power series over $\bC$ in the variable $t$ and let $\cK=\bC(\!(t)\!)$ denote the quotient field of $\cO$. Consider the map 
\[
\chi: \fg(\cK)\ra \bA_\cK^n, \quad \quad x\mapsto \chi(x)=(Q_1(x),\cdots, Q_n(x)).
\]
Here, $\bA_\cK^n\simeq \cK^n$ is the $n$-dimensional affine space over $\cK$. 
We are interested in determining the image of certain subspaces of $\fg(\cK)$ under this map. These subspaces are closely related to parabolic subalgebras of $\fg$. Let $\fp\subseteq \fg$ be a parabolic subalgebra, and let $\fp=\fl\oplus \fn$ denote its Levi decomposition. 
Let
\[
\tp=\fp\oplus t\fg(\cO)\quad \textrm{and}\quad \tp^\perp=t^{-1}\fn\oplus \fg(\cO).
\]
One of our main goals is to describe $\chi(\tp^\perp)$ explicitly.

\begin{rem} The map $\chi$ is a local analogue of the Hitchin map on the moduli space of Higgs bundles. As we explain  below, the space $\tp^\perp$ may be interpreted as the space of germs of (strongly) parabolic Higgs fields around a marked point with parabolic structure. Thus, $\chi( \tp^\perp)$ is a local analogue of the image of the parabolic Hitchin map. This is our motivation for studying $\chi( \tp^\perp)$. Our notation is motivated by the fact that $\tp$ is a parahoric subalgebra of $\fg(\cK)$ and $\tp^\perp$ is its annihilator under the canonical non-degenerate pairing on $\fg(\cK)$; cf. \cite{BKV}.
\end{rem} 

\begin{rem}\label{r:topology}
 To make the above problem more tractable, we consider the closure of $\chi(\tp^\perp)$ in $\bA_\cK^n$.  Let us explain what we mean by the word ``closure'' here. In fact, it is easy to see that $\chi(\tp^\perp)$ takes values in $t^{-m}\bA^n_\cO$ for some positive integer $m$. 
Now we think of $t^{-m}\bA^n_\cO$  as a pro-algebraic variety over $\bC$
  \[
t^{-m} \bA^n_\cO=t^{-m}\varprojlim_{k\geq 0} \bA^n_{\cO/t^k},
 \]
 where $\cO/t^k=\bC[t]/t^k$. By definition, a subset $U\subseteq t^{-m}\bA^n_\cO$ is dense if and only if $U/t^k\subseteq t^{-m}\bA^n_{\cO/t^k}$
  is dense in the Zariski topology for all $k\geq 0$. 
   \end{rem}

\begin{ntn} Let $\fa_{\fg,\fp}$  denote the closure of $\chi(\tp^\perp)$ in $\bA^n_{\cK}$.\footnote{It may be that $\chi(\tp^\perp)$ is closed in $\bA^n_\cK$, but we do not know how to prove this, except in Type $A$.} 
\end{ntn} 

 To give a description of $\fa_{\fg,\fp}$, we will fix a specific basis of invariant polynomials in the following way: 

\begin{ass}\label{ass:main}  
If $\phi$ is an $n\times n$ matrix, we write
\[
\det(\lambda I-\phi)=\lambda^n+c_1(\phi)\lambda^{n-1}+\cdots + c_n(\phi).
\]
We use the following generators for the invariant ring: 
\begin{itemize} 
\item Type $A_{n-1}$: identify $\fg$ with traceless $n \times n$ matrices and use  $(c_2, \cdots, c_n)$ as generators. 
\item Type $B_n$: identify $\fg$ with skew-symmetric $(2n+1)\times (2n+1)$ matrices and use $(c_2, c_4, \cdots, c_{2n})$.
\item Type $C_n$: identify $\fg$ with $2n\times 2n$ matrices preserving a symplectic form and use $(c_2, c_4,\cdots, c_{2n})$. 
\item Type $D_n$: identify $\fg$ with skew-symmetric $2n\times 2n$ matrices and use $(c_2, c_4, \cdots, c_{2n-2}, p_n)$. Here $p_n$ denotes the Pfaffian, which satisfies $p_n^2 = c_{2n}$. 
\item Type $G_2$: take the embedding $G_2\subset SL_7$ and use $(c_2,c_6)$; cf. \cite[\S 5.2]{HitchinG2}. 
\end{itemize} 
\end{ass}

\begin{rem}\label{rem:ordering}
Note that in all cases except type $D$, our chosen invariant polynomials are ordered by degree. However, in type $D$, {\em we do not order our invariant polynomials by degree}. This convention is required for the statement of Theorem \ref{t:main}.
\end{rem}

It turns out that $\fa_{\fg,\fp}$ admits a simple description in most cases under consideration. However, for certain parabolics subgroups in type $D$, the structure of $\fa_{\fg,\fp}$ is considerably more complicated. In the introduction, we shall only give the description of $\fa_{\fg,\fp}$ in the ``good" cases; i.e., those which satisfy Assumption \ref{ass:main2} below. The precise description of $\fa_{\fg,\fp}$ in the ``bad" cases is left to \S \ref{s:TypeD}.

\begin{ass} \label{ass:main2} 
Assume that the pair $(\fg , \fp)$ is one of the following types:
\begin{enumerate} 
\item $\fg$ is of type $A$, $B$, $C$, or $G_2$ and $\fp$ is arbitrary, or,
\item $\fg$ is of type $D_{n}$ and $\fp$ is the parabolic subalgebra of $\mathfrak{so}_{2n}$ corresponding to a subset $S = \{ \alpha_{i_1} , \alpha_{i_2} , \dots , \alpha_{i_k} \}$ of simple roots, with $i_1 < i_2 < \cdots < i_k$ satisfying $2(n - i_k) \ge \max ( i_1 , i_2 - i_1 , \dots , i_k - i_{k-1}, 4)$, where the simple roots $\alpha_1 , \dots , \alpha_n$ of $D_{n}$ are as indicated in the following diagram:

\begin{center}
\begin{tikzpicture}
\draw [thick] (2,0) -- (3.4,0) ;
\draw [thick] (4.6,0) -- (6,0) ;
\draw [thick] (6,0) -- (7,0.5);
\draw [dotted, thick] (3.6,0) -- (4.4,0);
\draw [fill] (2,0) circle(0.1);
\draw [fill] (3,0) circle(0.1);
\draw [fill] (5,0) circle(0.1);
\draw [fill] (6,0) circle(0.1);
\draw [fill] (7,0.5) circle(0.1);
\draw [fill] (7,-0.5) circle(0.1);

\node at (2,0.4) {$\alpha_1$};
\node at (3,0.4) {$\alpha_2$};
\node at (6,0.4) {$\alpha_{n-2}$};
\node at (7.7,0.5) {$\alpha_{n-1}$};
\node at (7.7,-0.5) {$\alpha_{n}$};

\draw [thick] (6,0) -- (7,-0.5) ;

\end{tikzpicture}
\end{center}
To understand this condition better, set $r_j = i_j - i_{j-1}$ and $s = n-i_k$. Then the Levi $\fl$ of $\fp$ has the form:
\[
\fl \cong \mathfrak{gl}_{r_1} \times \cdots \times \mathfrak{gl}_{r_k} \times \mathfrak{so}_{2s},
\]
and our assumption says $2s \ge \max(r_1 , r_2 , \dots , r_k , 4)$. 
\end{enumerate} 
\end{ass} 

Let $m_1\leq \cdots\leq  m_n$ denote the fundamental degrees of the Levi subalgebra $\fl\subseteq \fp$. Note that as $\fl$ is not semisimple, it will have $1$ as a fundamental invariant (possibly with multiplicity). Now we have:

\begin{thm} \label{t:main} Under Assumptions \ref{ass:main} and \ref{ass:main2}, we have 
\[
\fa_{\fg,\fp}= \bigoplus_{i=1}^n  t^{-d_i+m_i}\cO.
\]
\end{thm}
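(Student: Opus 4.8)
We prove the two inclusions separately, and in each case reduce everything to an explicit computation with the characteristic polynomial — and, in type $D$, the Pfaffian — in the defining matrix representation $\rho$ fixed in Assumption \ref{ass:main}. After choosing a flag adapted to $\fp$, so that $\fp$ becomes block upper triangular, $\tilde\fp^\perp$ is precisely the space of matrices in $\fg(\cO)$ whose entries strictly above the block diagonal are allowed a simple pole, all other entries being regular. In particular $\chi(\tilde\fp^\perp)\subseteq t^{-m}\bA^n_\cO$ for suitable $m$, so everything takes place in the pro-variety of Remark \ref{r:topology}, and for the density half it suffices to argue modulo $t^k$ for every $k$.

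\emph{The inclusion $\subseteq$.} By Assumption \ref{ass:main} each $Q_i$ is the coefficient $c_{d_i}$ of the characteristic polynomial, except that in type $D$ the last invariant $Q_n$ is the Pfaffian $p_n$; so it is enough to bound $\ord_t Q_i(x)$ from below for all $x\in\tilde\fp^\perp$. Expanding each $d_i\times d_i$ principal minor of $\rho(x)$ as a signed sum over permutations of a $d_i$-element index set, the $t$-adic valuation of each monomial is at least minus the number of its entries lying strictly above the block diagonal; the Pfaffian is handled the same way via its expansion into sub-Pfaffians. The inclusion then reduces to a combinatorial lemma: for a good pair $(\fg,\fp)$, the largest number of strictly-above-block entries that can occur in a monomial of $c_{d_i}$ (resp.\ of $p_n$) — keeping track of which matrix positions are actually permitted by the orthogonal or symplectic form — equals $d_i-m_i$, with $m_i$ matched to $d_i$ in the order dictated by Remark \ref{rem:ordering}. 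In each classical type this is an elementary optimisation governed by the block sizes and, in types $B$, $C$, $D$, the palindromic symmetry of the isotropic flag; for $G_2$ one argues by hand inside the $7$-dimensional representation.

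\emph{The inclusion $\supseteq$.} Here the plan is to produce enough elements of $\tilde\fp^\perp$ to fill a Zariski-dense subset of $\bigoplus_i t^{-d_i+m_i}\cO$ modulo every $t^k$. A convenient family is $t^{-1}e+\sigma$, where $e\in\fn$ is a Richardson element (so that its $G$-orbit is the Richardson orbit of $\fp$) and $\sigma$ ranges over a transverse slice inside $\fg(\cO)$: the pole orders of the $Q_i$ on this family are dictated by the orbit of $e$ and — for a good pair $(\fg,\fp)$ — equal exactly $d_i-m_i$, while the extra parameters in $\sigma$ provide the freedom needed for density. One computes the characteristic polynomial and Pfaffian of such an element directly and checks that the leading coefficients of the polar parts, together with the holomorphic parts, separate points densely; density — equivalently, dominance of $\chi$ on the slice — then follows from a Jacobian computation at a generic point. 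In type $A$ a refinement of this argument shows that $\chi(\tilde\fp^\perp)$ is already closed.

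\emph{Main obstacle.} The crux is the combinatorial matching in the first inclusion: proving that the largest admissible permutation supported in the above-block region has size \emph{exactly} $d_i-m_i$, with the correct pairing of $\fg$-invariants to Levi degrees, and that no unexpected relations among the polar coefficients intervene. This is precisely what Assumption \ref{ass:main2} secures — in type $D$ it forces the central $\mathfrak{so}_{2s}$ factor of the Levi to dominate each $\Gl$-block, which is what prevents the palindromic flag from producing ``extra'' long permutations or hidden relations. When this fails, for the bad type-$D$ parabolics, the combinatorics genuinely changes and the resulting relations can make $\fa_{\fg,\fp}$ singular; analysing exactly when and how is the subject of \S\ref{s:TypeD}. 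By comparison the density direction, though computational, is comparatively routine once the right section has been isolated.
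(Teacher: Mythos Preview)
Your plan for the inclusion $\subseteq$ is close in spirit to the paper's, though the paper organises it differently: in type $A$ it proves the combinatorial bound you describe (via a ``decrease'' count on permutations), but for $B$, $C$, $D$ under Assumption \ref{ass:main2}, and $G_2$, it does \emph{not} repeat the minor-by-minor analysis inside the orthogonal/symplectic form. Instead it embeds $\fg\subset\mathfrak{sl}_N$, takes the parabolic $\tilde\fp\subset\mathfrak{sl}_N$ stabilising the isotropic flag together with its orthogonals, and then proves a purely numerical lemma $m_i=\tilde m_{2i-1}$ matching the Levi degrees of $\fp$ with the odd-indexed Levi degrees of $\tilde\fp$. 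This buys the $\subseteq$ bound for free from the type-$A$ result, and isolates Assumption \ref{ass:main2} as exactly the condition under which that numerical matching goes through for the Pfaffian.

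The genuine gap is in your $\supseteq$ argument. Your proposal is a purely local one: take $t^{-1}e+\sigma$ with $e$ Richardson and $\sigma$ in a transverse slice, then check dominance by a Jacobian computation. The paper explicitly states that it does \emph{not} know how to carry out such a local construction outside type $A$ (where it uses an explicit companion matrix, not a slice), and instead establishes $\supseteq$ by a global detour: first it proves the global Theorem \ref{t:main2} by combining the already-established local $\subseteq$ with the dimension formula $\dim\cA_{G,P}=\dim\Bun_{G,P}$ from \cite{BKV}, and only then deduces the local density (Lemma in \S\ref{s:completion}) by localising the global Hitchin map at $x$ on a curve with enough auxiliary Borel-marked points to make the restriction $\theta$ surjective. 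Your sentence ``density then follows from a Jacobian computation at a generic point'' hides exactly the step the authors could not do: there is no evident reason the differential of $\chi$ on your slice is surjective onto $\bigoplus_i t^{-d_i+m_i}\cO/t^k\cO$ in types $B$, $C$, $D$, $G_2$, and you have not supplied one. Absent that computation, the $\supseteq$ half of your proof is a hope rather than an argument; the paper's global dimension count is what actually closes the loop.
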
 

\begin{rem}
In this theorem, we always put the fundamental invariants of $\fl$ in increasing order $m_1 \leq \cdots \leq m_n$, but the fundamental invariants of $\fg$ are not necessarily in increasing order if $\fg$ is of type $D$, see Remark \ref{rem:ordering}.
\end{rem}

This theorem can be considered as a refinement of a part of \cite[prop. 3.10]{Zhu}.\footnote{A quantum analogue of Zhu's result appeared in \cite{CK}.} We prove the inclusion $\subseteq$ of Theorem \ref{t:main}  by a direct computation. These computations are similar to the ones in \cite[\S 9]{KL}; see also \cite{Rohith} for an alternative approach in Type $C$. In Type $A$, we prove the inclusion $\supseteq$ of Theorem \ref{t:main} by constructing an appropriate companion matrix. We do not know how to construct these companion matrices in other types. Thus, to establish the inclusion  $\supseteq$ beyond Type $A$, we have to resort to global methods (see below).

\begin{exam}
Suppose $\fg=\Sl_4$ and let 
\[
\fp=
\left(
\begin{matrix} 
* & * & * & * \\
* & * & * & * \\
* & * & *& * \\
0 & 0 & 0 & *
\end{matrix} 
\right) 
= 
\fl\oplus \fn =
\left(
\begin{matrix} 
* & * & * & 0 \\
* & * & * & 0 \\
* & * & * & 0 \\
0 & 0 & 0 & *\\
\end{matrix} 
\right) \oplus 
\left(
\begin{matrix} 
0 & 0 & 0 & * \\
0 & 0 & 0 & * \\
0 & 0 & 0 & * \\
0 & 0 & 0& 0 \\
\end{matrix} 
\right) \implies
\tp^\perp =
\left(
\begin{matrix} 
\cO & \cO & \cO & t^{-1}\cO \\
\cO & \cO & \cO & t^{-1}\cO \\
\cO & \cO & \cO & t^{-1}\cO \\
\cO & \cO & \cO & \cO \\
\end{matrix} 
\right)
\] 
Here, we have $d_1=2$, $d_2=3$, $d_3=4$ and $m_1=1$, $m_2=2$, $m_3=3$. 
An explicit computation shows that $\fa_{\fg,\fp} = t^{-1}\cO \oplus t^{-1}\cO\oplus t^{-1}\cO$ in agreement with the above theorem. 
\end{exam}

\begin{rem} 
Let us explain the necessity of using the coefficients of characteristic polynomials. Indeed, in the example above, it is easy to see that there exists $A\in \tp^\perp$ such that $\on{Tr}(A^4)$ has a pole of order  $2$. Thus, Theorem \ref{t:main} does not hold if we use traces of powers of matrices as algebraically independent generators for the invariant ring of $\Sl_4$. 
\end{rem} 

\begin{rem} 
Consider a parabolic in Type $D$ that does not satisfy Assumption \ref{ass:main2}. As shown in \S \ref{s:TypeD}, in this case, $\fa_{\fg,\fp}$ has a complicated description, regardless of what basis of invariant polynomials we choose. In fact, we show that the global analogue of $\fa_{\fg,\fp}$ (denoted by $\cA_{G,P}$ below) can be \emph{singular} for these parabolics. This is in sharp contrast with the fact that when the assumptions are satisfied, $\cA_{G,P}$ is isomorphic to an affine space (Theorem \ref{t:main2}). 
\end{rem} 

\begin{rem} 
In \cite{GW}, Gukov and Witten call the image of the parabolic Hitchin map, the \emph{fingerprint of surface operators} and identify it as one of the key ingredients for checking Langlands duality of Hitchin fibres. In \S 4 of \emph{op. cit.}, they  conjecture that it should be possible to describe the image of the parabolic Hitchin map using the Kazhdan-Lusztig map from nilpotent orbits to conjugacy classes in the Weyl group \cite[\S 9]{KL}. Unfortunately, we were not able to make this conjecture precise. 
\end{rem}

\subsection{Main global result} 
Let $X$ be a smooth projective curve over $\bC$ of genus $g>1$ and let $\Omega$ denote the canonical bundle on $X$. Let $P_i$, $i=1,\cdots, k$, be parabolic subgroups of $G$. Let $x_i$, $i=1,\cdots, k$, be marked points on $X$. A parabolic $G$-bundle of type $(P_i,x_i)_{i=1,\cdots, k}$ is a $G$-bundle $\cE$ on $X$ equipped with  $P_i$-reduction $\cE_{x_i}$ of $\cE$ at $x_i$; cf. \cite{LS}. Here, $\cE_{x_i}$ denotes the restriction of $\cE$ to $x_i$. 

\begin{ntn} In what follows, for the ease of notation, we assume that we have only one marked point $x\in X$ and one parabolic $P$. The generalisation to finitely many parabolics introduces no additional complications. We let $\Bun_{G,P}$ denote the moduli stack of parabolic $G$-bundles on $X$ of type $(P,x)$. 
\end{ntn} 

A (strongly) parabolic $G$-\emph{Higgs bundle} of type $(P,x)$ is a parabolic $G$-bundle of type $(P,x)$ together with a section $\phi \in \Gamma(X, \ad^*_\cE\otimes \Omega(x))$ whose residue at $x$ lies in the nilpotent radical $\fn$. We let $\cM_{G,P}$ denote the moduli stack of parabolic $G$-Higgs bundles on $X$ of type $(P,x)$. Observe that, with respect to a local trivialisation of $E$ and of $\Omega$ around $x$, the germ of $\phi$ at $x$ is given by an element of $\tp^\perp = t^{-1} \fn \oplus \fg(\cO)$. It is in this sense that $\tp^\perp$ corresponds to local parabolic Higgs fields.

\begin{exam} 
  If $G=\GL_n$, then a $G$-bundle is the same as a rank $n$ vector bundle and a parabolic $G$-bundle (of type $(P,x)$) is a pair $(\cE,\cF_*)$ consisting of a vector bundle $\cE$ of rank $n$ and a flag $\cF_*$ of subspaces of $\cE_x$
\[
\{ 0 \} = \cF_0 \subset \cF_1 \subset \cdots \subset \cF_k =\cE_x
\]
such that the stabiliser of this flag is $P$. A parabolic $G$-Higgs bundle of type $(P,x)$ consists of a vector bundle $\cE$ and flag $\cF_*$ as above, together with a section $\phi \in \Gamma(X , \End(\cE) \otimes \Omega(x))$ such that the residue $\Res_x(\phi) \in \End(\cE)_x$ of $\phi$ at $x$ maps $\cF_i$ to $\cF_{i-1}$.
\end{exam} 

\begin{defe} 
The Hitchin map $h_{G,P}$ on $\cM_{G,P}$ is defined by 
\[
h_{G,P}(\cE,\phi)=(Q_1(\phi), \cdots Q_n(\phi)) \in \bigoplus_{i=1}^n \Gamma(X, \Omega^{d_i}(d_i.x)).
\]
\end{defe} 

\begin{ntn}
Let $\displaystyle \cA_{G,P} \subseteq \bigoplus_{i=1}^n \Gamma(X, \Omega^{d_i}(d_i.x))$ denote the Zariski closure of the image of $h_{G,P}$.
\end{ntn}

 The finite dimensional affine variety $\cA_{G,P}$ can be considered as the global analogue of $\fa_{\fg,\fp}$, introduced in \S \ref{s:mainlocal}. As in the local setting, we prove that $\cA_{G,P}$ admits a simple description in most cases under consideration: 

\begin{thm}\label{t:main2}
 Under Assumptions \ref{ass:main} and \ref{ass:main2}, we have 
\[
\cA_{G,P} = \bigoplus_i \Gamma(X, \Omega^{d_i}((d_i-m_i).x)). 
\]
\end{thm}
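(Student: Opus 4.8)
The plan is to prove the two inclusions in the displayed equality separately, as they have very different character. Write $V:=\bigoplus_i\Gamma(X,\Omega^{d_i}((d_i-m_i).x))$, which I regard as a finite-dimensional linear subspace of the Hitchin base $\bigoplus_i\Gamma(X,\Omega^{d_i}(d_i.x))$ in the evident way. Since $V$ is then Zariski closed, it is enough to show (i) $\mathrm{im}(h_{G,P})\subseteq V$, and (ii) $\mathrm{im}(h_{G,P})$ is Zariski dense in $V$.

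Claim (i) I would deduce from the local Theorem \ref{t:main}. Let $(\cE,\phi)\in\cM_{G,P}$. Away from $x$ the Higgs field $\phi$ is a regular section of $\ad^*_\cE\otimes\Omega$, so each $Q_i(\phi)$ is a regular section of $\Omega^{d_i}$ on $X\setminus\{x\}$. Near $x$, after choosing a local coordinate $t$ and trivialising $\cE$ and $\Omega$, the germ of $\phi$ lies in $\tp^\perp$, so by the inclusion $\chi(\tp^\perp)\subseteq\bigoplus_i t^{-d_i+m_i}\cO$ contained in Theorem \ref{t:main} the section $Q_i(\phi)$ has a pole of order at most $d_i-m_i$ at $x$. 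Hence $Q_i(\phi)\in\Gamma(X,\Omega^{d_i}((d_i-m_i).x))$, which is (i).

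Claim (ii) is the substantive one. Since $V$ is an irreducible affine space it suffices to exhibit a non-empty Zariski-open $U\subseteq V$ all of whose points lie in the image of $h_{G,P}$. In type $A$ one argues directly: for $b\in V$ one constructs a parabolic vector bundle and a Higgs field in companion form along a suitable filtration whose $h_{G,P}$-image is $b$ -- this is the global counterpart of the companion-matrix construction used for Theorem \ref{t:main}, equivalently the Beauville--Narasimhan--Ramanan spectral construction adapted to the marked point, and it works for every $b$. For the remaining types the plan is to use spectral covers. Fix a generic $b\in V$ and, using the standard representation of $G$ in types $B$, $C$, $D$ (and the $7$-dimensional representation in type $G_2$), matching the invariants chosen in Assumption \ref{ass:main}, form the spectral cover $\pi\colon X_b\to X$ inside the total space of $\Omega(x)$. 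For generic $b$ the curve $X_b$ is reduced, irreducible and smooth away from $\pi^{-1}(x)$, while over $x$ its branch and singularity structure is dictated by the pole drops $d_i-m_i$, hence by the Levi type $\fl\cong\Gl_{r_1}\times\cdots\times\Gl_{r_k}\times\So_{2s}$ and its analogues in the other types. Pushing forward a suitably chosen line bundle on $X_b$ -- carrying the orthogonal, respectively symplectic, pairing needed to reduce the structure group to $G$ -- produces a $G$-bundle $\cE$ together with the tautological Higgs field $\phi$, with $h_{G,P}(\cE,\phi)=b$; the essential point is that the local structure of $\pi_*$ near $x$ endows $\cE$ with a canonical $P$-reduction at $x$ for which $\Res_x\phi\in\fn$, so that $(\cE,\phi)\in\cM_{G,P}$. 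Letting $b$ range over the locus where $X_b$ has this expected shape yields the required open $U$.

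I expect the main obstacle to be precisely the local analysis at $x$: one must show that the numerical condition $2s\ge\max(r_1,\dots,r_k,4)$ of Assumption \ref{ass:main2} is exactly what forces the generic spectral cover to acquire, near $x$, the branch structure matching the flag type of $P$, and what makes the push-forward carry the parabolic structure cutting out $V$; conversely, this reveals why the construction must break for the bad parabolics in type $D$, where the degree-$n$ Pfaffian invariant is ``out of order'' (Remark \ref{rem:ordering}) relative to a $\Gl_{r_j}$-factor with $r_j>2s$, leading to the singular image analysed in \S\ref{s:TypeD}. A secondary, more routine point is the spectral-correspondence bookkeeping needed to choose the line bundle on $X_b$ with the right self-duality and degree, so that the push-forward has structure group $G$ rather than merely $\GL$. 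As a byproduct, restricting the Higgs fields produced above to a formal disc around $x$ (together with a density argument in the pro-variety topology of Remark \ref{r:topology}) yields the inclusion $\supseteq$ of Theorem \ref{t:main} in types $B$, $C$, $D$ and $G_2$.
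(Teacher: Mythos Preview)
Your part (i) is correct and matches the paper's argument exactly: the local inclusion $\chi(\tp^\perp)\subseteq\bigoplus_i t^{-d_i+m_i}\cO$ (proved directly in \S\ref{s:TypeA}--\ref{s:Types}) forces $h_{G,P}$ to land in $V$, and $V$ is closed.

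For (ii) you take a genuinely different route from the paper, and there is a real gap. The paper does \emph{not} attempt to construct preimages of generic points of $V$ outside type $A$. Instead it observes that $\cA_{G,P}$ is a closed subvariety of the irreducible affine space $V$, so $\cA_{G,P}=V$ follows once one knows $\dim\cA_{G,P}=\dim V$. The left side is supplied by \cite{BKV}: $\dim\cA_{G,P}=\dim\Bun_{G,P}=(g-1)\dim G+\dim(G/P)$. The right side is a Riemann--Roch computation using $\sum_i(2d_i-1)=\dim G$ and $\sum_i(d_i-m_i)=\tfrac12(\dim G-\dim L)=\dim(G/P)$. That is the whole argument.

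Your spectral-cover plan is attractive, but the step you yourself flag as the ``main obstacle'' is entirely unaddressed: showing that for generic $b\in V$ the push-forward of a line bundle on $X_b$ acquires a $P$-reduction at $x$ with residue in $\fn$, of flag type matching $\fp$. The authors state explicitly that they do not know a direct construction beyond type $A$ (see the discussion after Theorem~\ref{t:main}, and the remark following Theorem~\ref{t:main2} that a parabolic Kostant--Hitchin section in other types is an open problem). So your (ii), as written, is a programme rather than a proof; the dimension count bypasses the difficulty entirely. Your closing remark --- that global surjectivity would yield the local $\supseteq$ of Theorem~\ref{t:main} by restriction and density --- is indeed how the paper proceeds in \S\ref{s:completion}, but with the dimension-count version of Theorem~\ref{t:main2} as input.
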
 

We refer the reader to \S \ref{ss:typeDglobal} for the description of $\cA_{G,P}$ when $P$ is a ``bad parabolic" in Type $D$. Our approach to proving Theorem \ref{t:main2} is as follows: the inclusion $\subseteq$ is an immediate consequence of the corresponding inclusion $\subseteq$ in Theorem \ref{t:main}. To prove the opposite inclusion, we use the formula 
 \begin{equation} \label{eq:dim}
 \dim(\cA_{G,P})=\dim(\Bun_{G,P})=\frac{1}{2} \dim(\cM_{G,P})=(g-1)\dim(G)+\dim(G/P).
 \end{equation}
 established in \cite{BKV} (for arbitrary $G$ and $P$). Comparing Equation \eqref{eq:dim} with the dimension of $\Gamma(X, \Omega^{d_i}((d_i-m_i).x))$ establishes the theorem. In \S \ref{s:completion}, we use Theorem \ref{t:main2} to finish the proof of Theorem \ref{t:main}. For bad parabolics in type $D$, a similar kind of strategy is used to solve the local and global problems.

\begin{rem}
In the literature, one also finds the notion of \emph{weakly} parabolic Higgs bundles, where we only require that the residue of $\phi$ is in $\fp$ \cite{LogaresMarten}. The image of the Hitchin map for weakly parabolic Higgs bundles is easy to compute: it equals 
 $\bigoplus_{i=1}^n \Gamma(X, \Omega^{d_i}(d_i.x))$, regardless of the parabolic.
\end{rem}

\begin{exam} 
If $P$ is a Borel subalgebra, then $m_i=1$ for all $1\leq i \leq n$. In this case, the above theorem is well-known and easy to establish. In the case of type A and arbitrary parabolic, the above theorem is proved in \cite{SS}. However, it is unclear whether the methods of \cite{SS} can be extended to other groups. 
\end{exam}

 \begin{rem} 
We note that the companion matrix construction used in the proof of Theorem \ref{t:main} for Type $A$ (see \S \ref{ss:section}) can be globalised to define a parabolic analogue of the Kostant-Hitchin section and prove Theorem \ref{t:main2} in Type $A$ without resorting to  Formula \eqref{eq:dim}. It is an interesting open problem to generalise this section to other types. 
\end{rem} 

\subsection{Acknowledgements} We thank Dima Arinkin, David Ben-Zvi, Tsao-Hsien Chen, Sergei Gukov, Arun Ram, Rohith Varma, Zhiwei Yun and Xinwen Zhu for helpful discussions. The authors were supported by Australian Research Council DECRA Fellowships.


\section{Proof of Theorem \ref{t:main} for Type A}\label{s:TypeA}

\subsection{Setup}
Let $V = \mathbb{C}^n$ be the standard representation of $\fg = \Gl_n$ and let $\fp \subseteq \fg$ be a parabolic subalgebra. Then $\fp$ can be described as the subalgebra of $\fg \cong \on{End}(V)$ preserving a flag of subspaces
\begin{equation*}
\{ 0 \} = F_0 \subset F_1 \subset F_2 \subset \dots \subset F_k = V.
\end{equation*}
In other words, an endomorphism $T : V \to V$ is in $\fp$ if and only if $T(F_j) \subseteq F_j$ for all $j$. The type of parabolic $\fp$ is determined by the dimensions of subspaces in the flag. More specifically, let $n_j = \dim(F_j/F_{j-1})$ so that $n = n_1 + n_2 + \dots + n_k$ is a partition of $n$. Choose splittings $F_j = F_{j-1} \oplus V_j$ of the flag so that
\begin{equation*}
V = V_1 \oplus V_2 \oplus \dots \oplus V_k.
\end{equation*}
Then $\fp, \fn$ are given by
\begin{equation*}
\begin{aligned}
\fp &= \bigoplus_{i \ge j} \Hom( V_i , V_j ), \\ 
\fn &= \bigoplus_{i > j} \Hom( V_i , V_j ).
\end{aligned}
\end{equation*}

For $\phi \in \fg(\cK)$, write the characteristic polynomial as
\begin{equation*}
\det(\lambda I-\phi)= \lambda^n+c_1(\phi)\lambda^{n-1}+\cdots + c_n(\phi) \in \cK[\lambda].
\end{equation*}
The $\lambda^{n-j}$ coefficient $c_j : \fg(\cK) \to \cK$ is a degree $j$ invariant polynomial. The $\Gl_n$ local Hitchin map is given by
\[
\chi: \fg(\cK)\ra \cK^n,\quad  \phi \mapsto (c_1(\phi),\cdots c_n(\phi)). 
\] 
Let $\fl$ be the Levi subalgebra of $\fp$, which is reductive of rank $n$. We have that
\begin{equation*}
\fl = \bigoplus_{i} \End(V_i).
\end{equation*}
Let $m_0\leq m_1\leq \cdots \leq m_{n-1}$ denote the fundamental degrees of $\fl$ and let $\delta_1 \ge \delta_2 \ge \dots \ge \delta_m$ be the conjugate partition of $(n_1 , n_2 , \dots , n_k)$. Since $\End(V_i)$ has fundamental degrees $1,2,\dots , n_i$ it is easy to see that $m_0, m_1 , \dots , m_{n-1}$ are given by the following sequence:
\begin{equation}\label{eq:msequence}
\underbrace{1,1, \dots , 1}_{\delta_1 \text{ times}}, \underbrace{2,2, \dots , 2}_{\delta_2 \text{ times}}, \dots , \underbrace{m,m, \dots , m}_{\delta_{m} \text{ times}}.
\end{equation}
Note that in the $\Sl_n$ case, the corresponding Levi is the trace-free part of $\fl$, which has rank $n-1$ and the fundamental degrees are then $m_1 \le m_2 \le \cdots \le m_{n-1}$.

Recall that $\tp =\fp+t\fg(\cO)$ and $\tp^\perp=t^{-1}\fn +\fg(\cO)$. We will show that
\begin{equation*}
\displaystyle \chi(\tp^\perp) = \bigoplus_{i=0}^{n-1} t^{-i-1+m_i}\cO.
\end{equation*}
From this we easily obtain the result in the case of $\Sl_n$. As $t\tp^\perp=\fn +t\fg(\cO)$, this is clearly equivalent to showing:
\begin{equation}\label{eq:localgln}
\displaystyle \chi(t\tp^\perp) = \bigoplus_{i=0}^{n-1} t^{m_i}\cO.
\end{equation}
We will prove Equation (\ref{eq:localgln}) by showing inclusions in both directions.

\subsection{Companion matrices and the inclusion $\bigoplus_{i=0}^{n-1} t^{m_i}\cO \subseteq \chi(t\tp^\perp)$.}\label{ss:section} 
Recall that $c_j : \fg(\cK) \to \cK$ is an invariant polynomial of degree $j$. Let $c_j(A_1 , A_2 , \dots , A_j)$ denote the corresponding degree $j$ invariant symmetric multilinear map, defined so that $c_j(A) = c_j(A,A, \dots , A)$. It is easy to see that $c_j$ is the trace of the linear map $\wedge^j V \to \wedge^j V$ given by
\begin{equation}\label{eq:trace}
v_1 \wedge \dots \wedge v_{j} \mapsto \frac{(-1)^j}{j!} \sum_{\sigma} A_{\sigma(1)}(v_1) \wedge \dots \wedge A_{\sigma(j+1)}(v_{j+1}),
\end{equation}
where the sum is over all permutations of $1, \dots , j$. Indeed, the above expression is clearly symmetric and if we set $A_1 = \dots = A_{j} = A$, we recover the definition of $c_j(A)$.

Choose basis $e_{1,1} , \dots , e_{1,n_1}$ for $V_1$ and so on up to $e_{k,1}, \dots , e_{k,n_k}$ for $V_k$. Let $e_{i,i'}^*$ be the dual basis vectors. Then a basis for $\fp$ is given by $\{ e_{i,i'}^* \otimes e_{j,j'} \}_{i \ge j}$ and a basis for $\fn$ by $\{ e_{i,i'}^* \otimes e_{j,j'} \}_{i > j}$. 

\begin{lem}\label{l:cycles}
Given two sequences $(i_1,i'_1) , \dots , (i_{j} , i'_{j})$ and $(j_1,j'_1) , \dots , (j_{j} , j'_{j})$ of length $j$, let $x_u = e_{i_u , i'_u}^* \otimes e_{j_u , j'_u}$. Then
\begin{equation*}
c_j( x_1 , \dots , x_{j}) = \begin{cases} (-1)^j \sign(\sigma) & \text{if } (i_1,i'_1) , \dots , (i_{j} , i'_{j}) \text{ are distinct and } \\ & (j_u,j'_u) = (i_{\sigma(u)} , i'_{\sigma(u)}) \text{ for a permutation } \sigma, \\ 0 & otherwise.  \end{cases}
\end{equation*}
\end{lem}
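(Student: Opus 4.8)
The plan is to unwind the description of $c_j$ as a trace on $\wedge^j V$ given in \eqref{eq:trace}, which reduces the statement to a bookkeeping exercise about matrix units. First I would flatten the double index and write the chosen basis of $V$ as $f_1,\dots,f_n$, so that $x_u = f_{p_u}^* \otimes f_{q_u}$ for indices $p_u$ encoding $(i_u,i'_u)$ and $q_u$ encoding $(j_u,j'_u)$; thus $x_u(f_p)=\delta_{p,p_u}f_{q_u}$. An equivalent and slightly cleaner device is to observe that $c_j(x_1,\dots,x_j)$ can be read off from $c_j\bigl(\sum_u t_u x_u\bigr)$ as the coefficient of the squarefree monomial $t_1 t_2 \cdots t_j$, and that $M(t):=\sum_u t_u x_u$ is a generalized permutation matrix, the contribution of $x_u$ being a single entry $t_u$ in row $p_u$, column $q_u$.

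Next I would handle the non-degenerate case. Expanding the trace of the operator in \eqref{eq:trace} (with $A_u=x_u$) over the standard basis $\{f_S := f_{s_1}\wedge\cdots\wedge f_{s_j} : S=\{s_1<\cdots<s_j\}\}$ of $\wedge^j V$, a summand survives only if each $x$ sends $f_{s_u}$ to a basis vector, i.e.\ only if the permutation appearing in that summand carries $(p_1,\dots,p_j)$ to $(s_1,\dots,s_j)$. Since the $s_u$ are distinct, this forces the $p_u$ to be pairwise distinct, forces $S=\{p_1,\dots,p_j\}$, and pins the permutation down uniquely as the one sorting $(p_1,\dots,p_j)$ into increasing order; hence at most one pair (subset $S$, permutation) contributes, and the surviving term is a scalar multiple of the wedge of the correspondingly reordered $f_{q_u}$. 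This is a nonzero multiple of $f_S$ exactly when the $q_u$ are distinct and $\{q_1,\dots,q_j\}=\{p_1,\dots,p_j\}$, i.e.\ $q_u=p_{\sigma(u)}$ for a permutation $\sigma$ --- which is precisely the condition $(j_u,j'_u)=(i_{\sigma(u)},i'_{\sigma(u)})$. Assembling the global $(-1)^j$, the sign of the sorting permutation, and the sign from reordering the wedge, the three collapse to $(-1)^j\sign(\sigma)$. The principal-minor viewpoint yields the same conclusion more directly: the monomial $t_1\cdots t_j$ can occur only in the size-$j$ principal minor of $M(t)$ on the index set $\{p_1,\dots,p_j\}$ (requiring the $p_u$ distinct), inside that determinant it comes from the unique generalized transversal $p_u\mapsto q_u$ (requiring $q$ to permute $p$, with sign $\sign(\sigma)$), and the outer $(-1)^j$ is the usual sign relating $c_j$ to the $j$-th elementary symmetric function.

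For the ``otherwise'' clause I would dispose of the degenerate cases one by one: if the $p_u$ are not pairwise distinct then no $j$-subset $S$ contributes to the trace (no size-$j$ principal minor of $M(t)$ carries $t_1\cdots t_j$), so the value is $0$; if the $p_u$ are distinct but the $q_u$ are not, the surviving wedge has a repeated factor and vanishes; and if the $q_u$ are distinct but $\{q_u\}\neq\{p_u\}$ --- so $(j_u,j'_u)$ is not a permutation of $(i_u,i'_u)$ --- the surviving wedge is a basis vector $f_{S'}$ with $S'\neq S$, contributing nothing to the diagonal of the trace. These exhaust ``otherwise''.

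The one genuinely fiddly point, and the step I would be most careful with, is the sign bookkeeping in the non-degenerate case: verifying that the global $(-1)^j$, the sorting permutation implicit in \eqref{eq:trace}, and the wedge reordering combine to exactly $(-1)^j\sign(\sigma)$ with $\sigma$ the permutation named in the statement. Routing the whole argument through $M(t)$ and its principal minors makes this check, and the vanishing in the degenerate cases, nearly automatic, so I would present it that way; the only residual subtlety is a choice of normalization of the multilinear form (the fully symmetric polarization versus the coefficient of $t_1\cdots t_j$ in $c_j(\sum_u t_u x_u)$ differ by $j!$), and I would fix the one under which the displayed formula holds.
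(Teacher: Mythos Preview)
Your argument is correct and matches the paper's approach exactly: the paper's entire proof is the one-liner ``Straightforward, using that $c_j$ is the trace of the map given by \eqref{eq:trace}.'' Your caution about the $j!$ normalization is well-placed --- with the polarization as literally defined in the paper the non-degenerate value works out to $(-1)^j\sign(\sigma)/j!$ rather than $(-1)^j\sign(\sigma)$, but this is immaterial since the lemma is only ever invoked for its vanishing criterion (in the proof of Proposition~\ref{p:vanishing}).
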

\begin{proof}
Straightforward, using that $c_j$ is the trace of the map given by (\ref{eq:trace}).
\end{proof}

Let $S(n_1, \dots , n_k) = \{ (a,b) \in \mathbb{Z}^2 \; | \; 1 \le a \le k, \; 1 \le b \le n_a \}$. For each $(a,b) \in S$ we have a corresponding basis vector $e_{(a,b)} \in V_a$. Define a function $s : \{ 0, 1 ,  \dots , n-1 \} \to S$ as follows. Let $s(0) = (k,1)$. Then we obtain $s(i+1)$ from $s(i)$ in the following manner. Suppose that $s(i) = (\alpha(i) , \beta(i) )$. If there is an element of $S$ of the form $(\alpha' , \beta(i) )$ for $\alpha' < \alpha(i)$ then we let $s(i+1)$ be the element of this form which has maximal $\alpha'$. If there is no such element then we look for elements of $S$ of the form $(\alpha' , \beta(i)+1)$. If there is such an element we take $s(k+1)$ to be the element of this form with maximal $\alpha'$. Such an element will exist unless $i = n-1$. Comparing the definition of the sequence $\beta(0) , \beta(1) , \dots , \beta(n-1)$ with (\ref{eq:msequence}), we see that $\beta(j) = m_j$. An example of this construction is illustrated in Figure \ref{fig:sequence}.

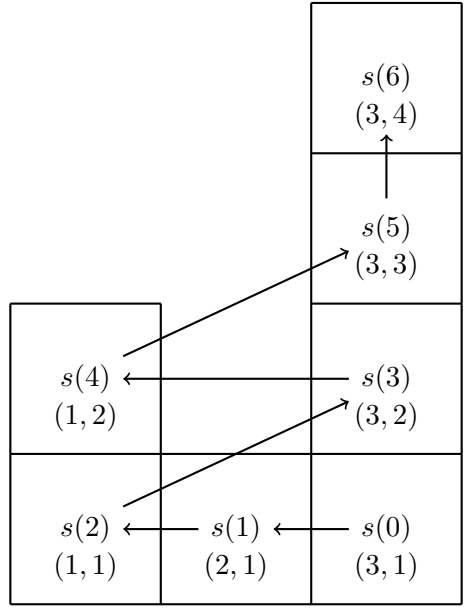
\begin{figure}

\begin{tikzpicture}

\draw[thick] (0,0) -- (6,0);
\draw[thick] (0,2) -- (6,2);
\draw[thick] (0,4) -- (2,4);
\draw[thick] (4,4) -- (6,4);
\draw[thick] (4,6) -- (6,6);
\draw[thick] (4,8) -- (6,8);

\draw[thick] (0,0) -- (0,4);
\draw[thick] (2,0) -- (2,4);
\draw[thick] (4,0) -- (4,8);
\draw[thick] (6,0) -- (6,8);

\node at (5,1) {$s(0)$};
\node at (3,1) {$s(1)$};
\node at (1,1) {$s(2)$};
\node at (5,3) {$s(3)$};
\node at (1,3) {$s(4)$};
\node at (5,5) {$s(5)$};
\node at (5,7) {$s(6)$};

\draw[thick,->] (4.5,1)--(3.5,1);
\draw[thick,->] (2.5,1)--(1.5,1);
\draw[thick,->] (1.5,1.3)--(4.5,2.7);
\draw[thick,->] (4.5,3)--(1.5,3);
\draw[thick,->] (1.5,3.3)--(4.5,4.7);
\draw[thick,->] (5,5.4)--(5,6.25);

\node at (5,0.5) {$(3,1)$};
\node at (3,0.5) {$(2,1)$};
\node at (1,0.5) {$(1,1)$};
\node at (5,2.5) {$(3,2)$};
\node at (1,2.5) {$(1,2)$};
\node at (5,4.5) {$(3,3)$};
\node at (5,6.5) {$(3,4)$};

\end{tikzpicture}
\caption{Construction of the sequence $s(0),s(1), \dots $, for $(n_1,n_2,n_3) = (2,1,4)$.}\label{fig:sequence}
\end{figure}

We say that $j\ge 1$ is a {\em decrease} if $s(j-1) = (a,b)$, $s(j) = (a',b')$ and $a' < a$. For example, in Figure \ref{fig:sequence}, the decreases are $j=1,2,4$. It follows that $(m_j-1)$ is the number of $i \in \{1,2, \dots , j\}$ such that $i$ is not a decrease. Thus for any given $j$, the number of decreases that are less than or equal to $j$ is exactly $j+1-m_j$. For $j=1, \dots , n-1$, let $\epsilon_j = 1$ if $j$ is a decrease and $0$ otherwise. So $\epsilon_1 + \dots + \epsilon_{j} = j+1-m_j$. Now define $E \in \fg(\cK)$ to be:
\begin{equation*}
E = t^{-\epsilon_1} e^*_{s(0)} \otimes e_{s(1)} + t^{-\epsilon_2} e^*_{s(1)} \otimes e_{s(2)} + \dots + t^{-\epsilon_{n-1}} e^*_{s(n-2)} \otimes e_{s(n-1)}.
\end{equation*}
Suppose we are given $f_0 , f_1,f_2, \dots , f_{n-1} \in \mathcal{O}$. We construct a variant of the usual notion of companion matrix, defined as:
\begin{equation*}
A(f_0,f_1, \cdots , f_{n-1}) := E - \sum_{j=0}^{n-1} f_j e^*_{s(j)} \otimes e_{s(0)} \in \tp^\perp.
\end{equation*}
In matrix form $A(f_0 , \cdots , f_{n-1})$ is given by:
\[
A(f_0,\cdots, f_{n-1}) =\begin{bmatrix} 
-f_0 & -f_1 & -f_2  & \cdots & -f_{n-1}  \\
t^{-\epsilon_1} & 0 & 0  & \cdots &   0 \\
0 & t^{-\epsilon_2} & 0 & 0 & 0 \\
\vdots & \vdots & \vdots & \vdots & \vdots \\
0 & \cdots & 0 & t^{-\epsilon_{n-1}} & 0 
\end{bmatrix}.
\]

We claim that $c_j(A) = t^{-j+m_{j-1}}f_{j-1}$. Indeed, the only basis element of $\wedge^{j} V$ which contributes to the trace of $(-1)^j\wedge^{j} A$ is $e_{s(0)} \wedge e_{s(1)} \wedge \dots \wedge e_{s(j-1)}$, and for this element we find
\begin{equation*}
\begin{aligned}
(-1)^j\left(\wedge^{j}\right) A ( e_{s(0)} \wedge e_{s(1)} \wedge \dots \wedge e_{s(j-1)} ) &= (-1)^{j-1} t^{-(\epsilon_1 + \dots + \epsilon_{j-1}) }f_{j-1} e_{s(1)} \wedge e_{s(2)} \wedge \dots \wedge e_{s(j-1)} \wedge e_{s(0)} \\
&= t^{-j+m_{j-1}} f_{j-1} e_{s(0)} \wedge e_{s(1)} \wedge \dots \wedge e_{s(j-1)}.
\end{aligned}
\end{equation*}
Thus $c_j( tA) = t^{m_{j-1}}f_{j-1}$. This proves the inclusion $\bigoplus_{i=0}^{n-1} t^{m_i}\cO \subseteq \chi(t\tp^\perp)$, since $f_0 , f_1 , \dots , f_{n-1} \in \mathcal{O}$ were arbitrary.

\subsection{The inclusion $\chi(t\tp^\perp) \subseteq \bigoplus_{i=0}^{n-1} t^{m_i}\cO$.}

\begin{prop}\label{p:vanishing}
Let $x_1, \dots , x_{j} \in \End(V)$. Then $c_j( x_1 , x_2 , \dots , x_j) = 0$ whenever $j+1-m_{j-1}$ or more of the $x_i$'s belong to $\mathfrak{n}$.
\end{prop}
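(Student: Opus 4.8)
The plan is to use the explicit description of the invariant polynomial $c_j$ as the trace of the operator on $\wedge^j V$ given by formula \eqref{eq:trace}, together with Lemma \ref{l:cycles}, which computes $c_j(x_1,\dots,x_j)$ on the standard basis vectors $e^*_{i,i'}\otimes e_{j,j'}$. Since $c_j$ is multilinear, it suffices to prove the vanishing when each $x_u$ is one of the basis vectors $e^*_{i_u,i'_u}\otimes e_{j_u,j'_u}$, and at least $j+1-m_{j-1}$ of these basis vectors lie in $\fn$, i.e.\ satisfy $i_u>j_u$ (strictly, in the sense that the "source" block index is larger than the "target" block index). By Lemma \ref{l:cycles}, $c_j(x_1,\dots,x_j)=0$ unless the indices $(i_1,i'_1),\dots,(i_j,i'_j)$ are distinct and $(j_u,j'_u)=(i_{\sigma(u)},i'_{\sigma(u)})$ for some permutation $\sigma\in S_j$; so we may assume we are in this non-degenerate case, where the $j$ basis vectors form a single "functional graph" that is a disjoint union of cycles permuting a set $T$ of $j$ distinct index pairs.

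First I would reduce to a single cycle. Decompose $\sigma$ into disjoint cycles; the value $c_j(x_1,\dots,x_j)$ factors (up to sign) according to the cycles, and each basis vector $x_u$ belongs to exactly one cycle. So it is enough to bound, for a single cycle on a set of $\ell$ distinct index pairs $(a_1,b_1),\dots,(a_\ell,b_\ell)$ (with the convention $a_{\ell+1}=a_1$, etc.), how many of the $\ell$ "edges" $e^*_{a_{r+1},b_{r+1}}\otimes e_{a_r,b_r}$ can lie in $\fn$ — that is, how many indices $r$ can satisfy $a_{r+1}>a_r$. The key combinatorial observation is that along a cycle the block-indices $a_1,\dots,a_\ell$ return to their start, so the number of "ascents" $a_{r+1}>a_r$ is at most $\ell - (\text{number of distinct values among } a_1,\dots,a_\ell)$; more precisely, if the indices $a_r$ take $v$ distinct values, the cyclic sequence must contain at least $v$ descents (one to re-enter each value class from above), hence at most $\ell-v$ ascents. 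Summing over all cycles, with $\ell$ the cycle lengths summing to $j$ and $v$ the number of distinct block-indices appearing in that cycle, the total number of $x_u\in\fn$ is at most $j-(\text{number of "block-value re-entries"})$, which I will match against the quantity $j+1-m_{j-1}$.

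The heart of the argument, and the step I expect to be the main obstacle, is identifying $j-m_{j-1}$ — equivalently $j+1-m_{j-1}$ minus $1$ — as exactly the maximal number of $\fn$-components that can appear in a nonzero term, and matching this with the combinatorics of the sequence $s(0),\dots,s(n-1)$ from \S\ref{ss:section}. Recall from that subsection that $m_{j-1}=\beta(j-1)$ and that $j-m_{j-1}$ is precisely the number of decreases among $\{1,\dots,j-1\}$, where "decrease" refers to a drop in the block-index $\alpha$. I would argue that a cyclic arrangement of $j$ distinct index pairs partitioned by their $\beta$-value (row) into groups can have ascending block-index on at most $j - m_{j-1}$ of its edges: within each fixed row, which has some width $w$, any cyclic traversal of $w$ cells has at most $w-1$ ascents, and summing $\sum (w_i - 1)$ over the rows used gives at most (number of cells used) $-$ (number of rows used); a short extremal analysis, comparing the multiset of row-widths of $T$ with the conjugate partition $(\delta_1,\dots,\delta_m)$ and invoking \eqref{eq:msequence}, shows this quantity is maximized by $j-m_{j-1}$, with the extra "$+1$" in the statement accounting for the single edge that can also jump to a strictly higher row. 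Hence strictly fewer than $j+1-m_{j-1}$, i.e.\ at most $j-m_{j-1}$, of the $x_u$ can lie in $\fn$ in any nonzero term, which is the contrapositive of the claim. The routine bookkeeping — keeping track of signs via Lemma \ref{l:cycles}, and the precise extremal count relating row-widths to the $\delta$-partition — I will carry out in detail, but the conceptual content is the cyclic ascent bound above.
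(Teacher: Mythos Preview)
Your overall strategy matches the paper's: reduce to basis vectors by multilinearity, invoke Lemma \ref{l:cycles} to assume the $x_u$ correspond to a permutation $\sigma$ of $j$ distinct index-pairs, and then bound the number of $x_u \in \fn$ by a combinatorial count of block-index ``decreases'' of $\sigma$. The gap is in the combinatorial bound itself.

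Your ``key combinatorial observation'' --- that a cyclic sequence $a_1,\dots,a_\ell$ taking $v$ distinct values has at most $\ell - v$ ascents, because there must be ``at least $v$ descents, one to re-enter each value class from above'' --- is simply false. Take $\ell = 3$ and $(a_1,a_2,a_3) = (1,2,3)$: here $v = 3$, there are two ascents ($1\to 2$, $2\to 3$) and only one descent ($3\to 1$), so your bound would give $0$ ascents. The reasoning fails because a value can be entered from below just as well as from above. The correct per-cycle bound, proved in the paper as Lemma \ref{l:lambda}, is that the number of decreases is at most $\ell - \max_a j_a$, where $j_a$ is the number of cells in the cycle with block index $a$: between any two consecutive occurrences of cells from the most-populated block there must be at least one non-decrease. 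One then minimises $\max_a j_a$ over all configurations $(j_1,\dots,j_k)$ with $\sum j_a = j$ and $0 \le j_a \le n_a$; this minimum is exactly $m_{j-1}$ (Lemma \ref{l:mu}), yielding the bound $j - m_{j-1}$.

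Your second attempt, partitioning the cells by their $\beta$-value and asserting that ``within each fixed row any cyclic traversal of $w$ cells has at most $w-1$ ascents'', also does not go through: the permutation $\sigma$ does not restrict to a permutation of any single row, so there is no cyclic traversal within a row to which such a bound could apply. The partition that matters is by block index (the first coordinate), and the relevant extremal quantity is the \emph{maximum} occupancy of a single block, not the number of rows used.
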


We will prove the proposition below. But first, let us see why it implies our desired result. 

\begin{cor}
We have the inclusion $\chi(t\tp^\perp) \subseteq \bigoplus_{i=0}^{n-1} t^{m_i}\cO$.
\end{cor}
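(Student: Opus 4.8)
The plan is to derive the corollary directly from Proposition~\ref{p:vanishing} by a multilinear expansion; no further geometric input is needed.

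First I would rewrite $t\tp^\perp$: since $\tp^\perp = t^{-1}\fn \oplus \fg(\cO)$, multiplying by $t$ gives $t\tp^\perp = \fn \oplus t\fg(\cO)$ (the two summands intersect trivially because $\fn$ consists of constant matrices while $t\fg(\cO)$ has entries in $t\bC[\![t]\!]$). Hence every $\phi \in t\tp^\perp$ has a unique expression $\phi = a + tb$ with $a \in \fn$ and $b \in \fg(\cO)$. Fixing $j \in \{1,\dots,n\}$ and using the invariant symmetric multilinear form $c_j(-,\dots,-)$ attached to $c_j$, together with the fact that $c_j$ is linear in each slot, I would expand
\[
c_j(\phi) = \sum_{l=0}^{j}\binom{j}{l}\, t^{\,j-l}\, c_j\bigl(\underbrace{a,\dots,a}_{l},\underbrace{b,\dots,b}_{j-l}\bigr).
\]

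Now I would apply Proposition~\ref{p:vanishing} to the $l$-th summand: exactly $l$ of its arguments (the copies of $a$) lie in $\fn$, so it vanishes as soon as $l \ge j+1-m_{j-1}$; thus only the terms with $l \le j-m_{j-1}$, i.e.\ $j-l \ge m_{j-1}$, can survive. Since moreover $a \in \fn \subseteq \fg(\cO)$ and $b \in \fg(\cO)$, each surviving coefficient $c_j(a,\dots,a,b,\dots,b)$ lies in $\cO$. Therefore $c_j(\phi) \in t^{m_{j-1}}\cO$, and letting $j$ run over $1,\dots,n$ (equivalently $i = j-1$ over $0,\dots,n-1$) yields $\chi(\phi) \in \bigoplus_{i=0}^{n-1} t^{m_i}\cO$; as $\phi \in t\tp^\perp$ was arbitrary, $\chi(t\tp^\perp) \subseteq \bigoplus_{i=0}^{n-1} t^{m_i}\cO$.

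Given Proposition~\ref{p:vanishing} there is essentially no obstacle; the only points deserving a line of justification are the splitting $t\tp^\perp = \fn \oplus t\fg(\cO)$ (so that the ``constant nilpotent part'' $a$ is well defined) and the absence of poles in the surviving multilinear coefficients. The genuine work is all in Proposition~\ref{p:vanishing}, whose proof must estimate $c_j$ on tuples with many entries in $\fn$ — I would expect that to be the combinatorial core, handled via Lemma~\ref{l:cycles} and a cycle-counting/decrease argument on the index set $S(n_1,\dots,n_k)$ as set up before Figure~\ref{fig:sequence}.
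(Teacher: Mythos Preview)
Your proof is correct and follows essentially the same approach as the paper: decompose $\phi \in t\tp^\perp$ as $a + tb$ with $a \in \fn$, $b \in \fg(\cO)$, expand $c_j(\phi)$ multilinearly, and invoke Proposition~\ref{p:vanishing} to kill the terms with too many $\fn$-entries. The paper's proof is more terse but identical in substance.
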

\begin{proof}
Any $A \in t\tp^\perp$ can be written as $A = y + tz$, where $y \in \fn$ and $z \in \fg(\cO)$. Then $c_j(A) = c_j(y+tz , y+tz , \dots , y+tz)$. Expanding this out and using Proposition \ref{p:vanishing}, we see that $c_j(A)$ is divisible by $t^{m_{j-1}}$. 
\end{proof}

For $1 \le r \le n-1$, we define
\begin{equation*}
J_r = \{ (j_1 , \dots , j_k) \in \mathbb{Z}^k \; | \; 0 \le j_a \le n_a, \; j_1 + \dots + j_k = r \}.
\end{equation*}
For each $(j_1 , \dots , j_k) \in J_{r}$ we further define
\begin{equation*}
S(j_1 , \dots , j_k) = \{ (a,b) \in \mathbb{Z}^2 \; | \; 1 \le a \le k, \; 1 \le b \le j_a \}.
\end{equation*}

Let $\Perm(S(j_1 , \dots , j_k))$ be the set of all permutations of $S(j_1 , \dots , j_k)$. If $\sigma \in \Perm(S(j_1 , \dots , j_k))$ and $(a,b) \in S(j_1 , \dots , j_k)$ we say that $(a,b)$ is a {\em decrease} for $\sigma$ if $\sigma(a,b) = (a',b')$ with $a' < a$. Let $\Dec(\sigma)$ be the total number of decreases of $\sigma$. Let
\begin{equation*}
\begin{aligned}
\lambda(j_1 , \dots , j_k) &= \max_{\sigma \in \Perm(S(j_1 , \dots , j_k))} \left\{ \Dec(\sigma) \right\}, \\
\mu_r &= \max_{(j_1, \dots , j_m) \in J_r} \left\{ \lambda(j_1 , \dots , j_k) \right\}.
\end{aligned}
\end{equation*}

\begin{lem}\label{l:lambda}
Let $(j_1 , \dots , j_k) \in J_r$. Then
\begin{equation*}
\lambda(j_1 , \dots , j_k ) = r - \max (j_1 , \dots , j_k ).
\end{equation*}
\end{lem}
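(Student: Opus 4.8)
The plan is to reformulate the statement in terms of \emph{non-decreases} and then prove two matching bounds. Since $|S(j_1,\dots,j_k)| = r$, every $(a,b) \in S(j_1,\dots,j_k)$ either is a decrease for $\sigma$ or satisfies $\sigma(a,b) = (a',b')$ with $a' \ge a$; call the latter a \emph{non-decrease} of $\sigma$. Thus $\Dec(\sigma) = r - \#\{\text{non-decreases of }\sigma\}$, so Lemma \ref{l:lambda} is equivalent to the assertion that $\min_{\sigma \in \Perm(S(j_1,\dots,j_k))} \#\{\text{non-decreases of }\sigma\} = \max(j_1,\dots,j_k)$. Write $m = \max(j_1,\dots,j_k)$, fix $a_0$ with $j_{a_0} = m$, and let $C_a = \{(a,b) : 1 \le b \le j_a\}$ denote the $a$-th ``column'' of $S$, so $|C_a| = j_a$.

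For the lower bound $\#\{\text{non-decreases of }\sigma\} \ge m$, I would construct an injection $\phi$ from $C_{a_0}$ into the set of non-decreases of $\sigma$. Given $x \in C_{a_0}$, follow its forward $\sigma$-orbit $x,\sigma(x),\sigma^2(x),\dots$ as long as the first coordinate strictly decreases at each step; since first coordinates lie in $\{1,\dots,k\}$ and start at $a_0$, this process stops after $t \le a_0-1$ steps at an element $\phi(x) := \sigma^t(x)$, which is by construction a non-decrease of $\sigma$. (Equivalently: the first-coordinate function cannot be strictly decreasing all the way around a cycle of $\sigma$, so a non-decrease is always reached.) The key point is injectivity of $\phi$: if $\phi(x) = \phi(x') = z$ with $x \ne x'$ and, say, $t_{x} < t_{x'}$, then $x = \sigma^{\,t_{x'}-t_x}(x')$ lies on the strictly-decreasing orbit segment issuing from $x'$, which forces the first coordinate of $x$ to be $< a_0$ — contradicting $x \in C_{a_0}$. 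Hence $\#\{\text{non-decreases of }\sigma\} \ge |C_{a_0}| = m$.

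For the upper bound, I would exhibit one $\sigma$ with exactly $m$ non-decreases. Partition $S(j_1,\dots,j_k)$ into ``rows'' $R_b = \{(a,b)\in S(j_1,\dots,j_k)\}$ for $1 \le b \le m$ (so $R_b$ consists of those $(a,b)$ with $j_a \ge b$, and these are precisely the nonempty rows); within each row list its elements in strictly decreasing order of first coordinate and let $\sigma$ cyclically shift along that list. Every step inside a row other than the wrap-around is a decrease, and the wrap-around is the unique non-decrease contributed by that row, so $\#\{\text{non-decreases of }\sigma\} = m$. Combining the two bounds gives $\min_\sigma \#\{\text{non-decreases}\} = m$, i.e. $\lambda(j_1,\dots,j_k) = r - m = r - \max(j_1,\dots,j_k)$.

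I expect the main obstacle to be the injectivity of $\phi$ in the lower bound: one must check carefully that two distinct elements of $C_{a_0}$ cannot flow to the same ``first non-decrease'', which boils down to the observation that along its strictly-decreasing portion an orbit never revisits the first-coordinate value $a_0$. An alternative, more formal route to the lower bound is linear-programming duality for the transportation polytope with equal row and column sums $(j_1,\dots,j_k)$: minimizing the mass on the closed upper triangle has dual optimum $m$, witnessed by an explicit dual-feasible vector. I would mention this as a remark but present the orbit argument as the primary proof.
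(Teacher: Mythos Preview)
Your proof is correct. The paper follows the same two-bound structure but argues differently on both halves. For the existence of a permutation attaining $r-m$ decreases, the paper builds a single $r$-cycle that traverses all of $S(j_1,\dots,j_k)$ row by row (within each row in order of decreasing first coordinate, then jumping up to the next row), whereas you use a product of $m$ disjoint row-cycles; both constructions yield exactly one non-decrease per row. For the bound $\Dec(\sigma)\le r-m$, the paper first reduces to cyclic permutations by splicing any two cycles together at their non-decrease points (which can only increase $\Dec$), and then, for a single cycle, observes that between any two consecutive visits to the maximal column $C_{a_0}$ there must occur at least one non-decrease, giving at least $j_{a_0}=m$ non-decreases. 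Your orbit-following injection $\phi\colon C_{a_0}\to\{\text{non-decreases}\}$ is a more direct version of this same idea that bypasses the preliminary reduction to cyclic permutations; in exchange, the paper's route makes the final pigeonhole count on a single cycle immediate and requires no injectivity check. The LP-duality remark has no counterpart in the paper.
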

\begin{proof}
We first show that there is a permutation $\sigma$ with $\Dec(\sigma) = r - \max (j_1 , \dots , j_k )$. For this we construct a map $s : \{ 1 , \dots , r \} \to S(j_1 , \dots , j_k)$ as follows. Let $s(1) = (a,1)$, where $a$ is the maximal element of $\{ 1 , \dots , k \}$ with $j_a \neq 0$. Then we obtain $s(i+1)$ from $s(i)$ as follows. Suppose that $s(i) = (\alpha(i) , \beta(i) )$. If there is an element of $S(j_1,\dots , j_k)$ of the form $(\alpha' , \beta(i) )$ for $\alpha' < \alpha$ then we let $s(i+1)$ be the element of this form which has maximal $\alpha'$. If there is no such element then we look for elements of $S(j_1,\dots , j_k)$ of the form $(\alpha' , \beta(i)+1)$. If there is such an element we take $s(i+1)$ to be the element of this form with maximal $\alpha'$. Such an element will exist unless $i = r$. Let $\sigma$ be the cyclic permutation $\sigma = ( s(1) , s(2) , \dots , s(r) )$. From this construction it is clear that the number of decreases of $\sigma$ is $r - \max (j_1 , \dots , j_k )$.\\

Now we show that for any permutation $\sigma$ of $S(j_1 , \dots , j_k)$ we have $\Dec(\sigma) \le r - \max (j_1 , \dots , j_k )$. We claim that if $\sigma$ is not a cyclic permutation then we can find a cyclic permutation $\hat{\sigma}$ with $Dec(\sigma) \le Dec(\hat{\sigma})$. To see this, suppose that $\sigma$ is not cyclic and consider any two cycles of $\sigma$, say, $( s_1 , s_2 , \dots , s_u ) , (t_1 , t_2 , \dots , t_v)$. We can write theses cycles in such a way that $s_u$ and  $t_v$ are not decreases (since every cycle must have at least one non-decrease). Now let $\sigma'$ be the permutation obtained from $\sigma$ by replacing the cycles $( s_1 , s_2 , \dots , s_u ) , (t_1 , t_2 , \dots , t_v)$ with a single cycle $( s_1 , s_2 , \dots , s_u  , t_1 , t_2 , \dots , t_v)$. By construction $\Dec(\sigma) \le \Dec(\sigma')$. Continuing in this fashion we obtain the desired cyclic permutation $\hat{\sigma}$.\\

We have reduced the problem to showing that $\Dec(\sigma) \le r - \max (j_1 , \dots , j_k )$ for any cyclic permutation $\sigma$. Let $a$ be such that $j_a = \max (j_1 , \dots , j_k )$. We consider the entries in $\sigma$ of the form $(a , b)$ for some $b$. That is, we write $\sigma$ as:
\begin{equation*}
\sigma = ( \dots , (a, b_1 ) , \dots , (a , b_2 ) , \dots , (a , b_{j_a} ) , \dots ).
\end{equation*}
Between any two terms $(a , b_i ) , (a , b_{i+1} )$ and between $(a , b_{j_a} )$ and $(a,b_1)$, there must be at least one non-decrease. Thus $\Dec(\sigma) \le r - j_a$ as claimed.
\end{proof}

\begin{lem}\label{l:mu}
We have $\mu_j = j - m_{j-1}$.
\end{lem}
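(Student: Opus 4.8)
The plan is to reduce the statement to an elementary counting problem by means of Lemma~\ref{l:lambda}. By that lemma $\lambda(j_1,\dots,j_k)=r-\max(j_1,\dots,j_k)$, so
\[
\mu_r=\max_{(j_1,\dots,j_k)\in J_r}\bigl(r-\max(j_1,\dots,j_k)\bigr)=r-\min_{(j_1,\dots,j_k)\in J_r}\max(j_1,\dots,j_k),
\]
and it suffices to prove $\min_{(j_1,\dots,j_k)\in J_r}\max(j_1,\dots,j_k)=m_{r-1}$. I would first rephrase this minimum as a threshold: for an integer $v\ge 0$ there is a tuple in $J_r$ with $\max_a j_a\le v$ if and only if $r\le\sum_{a=1}^{k}\min(n_a,v)$, because the constraints $0\le j_a\le n_a$ and $j_a\le v$ together amount to $0\le j_a\le\min(n_a,v)$, and every integer between $0$ and $\sum_a\min(n_a,v)$ is attained as such a sum. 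Hence $\min_{(j_1,\dots,j_k)\in J_r}\max(j_1,\dots,j_k)$ equals the least $v$ with $\sum_{a=1}^{k}\min(n_a,v)\ge r$ (such a $v$ exists and is at most $m$ since $r\le n-1<n=\sum_a n_a$).

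Next I would invoke the standard identity relating a partition to its conjugate:
\[
\sum_{a=1}^{k}\min(n_a,v)=\sum_{a=1}^{k}\#\{\,i:1\le i\le v,\ n_a\ge i\,\}=\sum_{i=1}^{v}\#\{\,a:n_a\ge i\,\}=\delta_1+\delta_2+\cdots+\delta_v .
\]
Thus $\min_{(j_1,\dots,j_k)\in J_r}\max(j_1,\dots,j_k)$ is the least $v$ with $\delta_1+\cdots+\delta_v\ge r$. Finally I would compare with the explicit list (\ref{eq:msequence}) of the nondecreasing sequence $m_0\le m_1\le\cdots\le m_{n-1}$: by (\ref{eq:msequence}), $m_j=v$ exactly when $\delta_1+\cdots+\delta_{v-1}\le j<\delta_1+\cdots+\delta_v$. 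Putting $j=r-1$, this becomes $\delta_1+\cdots+\delta_{v-1}<r\le\delta_1+\cdots+\delta_v$, i.e. $v$ is the least integer with $\delta_1+\cdots+\delta_v\ge r$. Therefore $m_{r-1}$ is precisely that least $v$, and combining with the first display yields $\mu_r=r-m_{r-1}$.

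The argument is essentially routine. The only points requiring care are the off-by-one bookkeeping between the index $r-1$ and the partial sums of the conjugate partition in (\ref{eq:msequence}), and the observation that it is the capacity constraint $j_a\le n_a$ that converts $\min\max$ into a threshold on $\sum_a\min(n_a,v)$ (rather than on $kv$), which is exactly where the conjugate partition — and hence the fundamental degrees of $\fl$ — enters.
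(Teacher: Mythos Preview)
Your proof is correct and follows the same approach as the paper: both reduce via Lemma~\ref{l:lambda} to showing $m_{r-1}=\min_{(j_1,\dots,j_k)\in J_r}\max(j_1,\dots,j_k)$, and then identify this minimum using the explicit description~(\ref{eq:msequence}). The paper simply declares this last identification ``immediate'', whereas you spell it out via the threshold reformulation and the identity $\sum_a\min(n_a,v)=\delta_1+\cdots+\delta_v$; this is helpful detail but not a different argument.
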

\begin{proof}
By Lemma \ref{l:lambda} we have
\begin{equation*}
\begin{aligned}
\mu_{r} &= \max_{(j_1, \dots , j_k) \in J_{r}} \left\{ \lambda(j_1 , \dots , j_k) \right\} \\
& = \max_{(j_1, \dots , j_k) \in J_{r}} \left\{ r - \max(j_1,\dots , j_k) \right\} \\
&= r - \min_{(j_1, \dots , j_k) \in J_{r}} \left\{ \max(j_1 , \dots , j_k) \right\}.
\end{aligned}
\end{equation*}
Therefore it suffices to show that:
\begin{equation*}
\begin{aligned}
m_{r-1} &= \min_{(j_1, \dots , j_k) \in J_{r}} \left\{ \max(j_1 , \dots , j_k) \right\} \\
&= \min_{\substack{ (j_1, \dots , j_k) \in \mathbb{Z}^k \\ 0 \le j_a \le n_a \\ j_1 + \dots + j_k = r }}\left\{ \max(j_1 , \dots , j_k) \right\}
\end{aligned}
\end{equation*}
But this is immediate, since the $m_j$ are given by the sequence (\ref{eq:msequence}).
\end{proof}

\begin{proof}[Proof of Proposition \ref{p:vanishing}]
For this we may as well take $x_1 , \dots , x_j$ to be of the form $x_u = e^*_{i_u , i'_u} \otimes e_{j_u , j'_u}$ for some pair of sequences $(i_1,i'_1) , \dots , (i_{j} , i'_{j})$ and $(j_1,j'_1) , \dots , (j_{j} , j'_{j})$. To prove Proposition \ref{p:vanishing}, we show that if $c_j( x_1 , \dots , x_{j} ) \neq 0$, then at most $j-m_{j-1}$ of the $x_u$ belong to $\mathfrak{n}$. So suppose that $c_j( x_1 , \dots , x_{j} ) \neq 0$. By Lemma \ref{l:cycles}, we may assume that $(i_1,i'_1) , \dots , (i_{j} , i'_{j})$ are distinct and that $(j_u , j'_u) = (i_{\sigma(u)} , i'_{\sigma(u)})$ for some permutation $\sigma$ of $1, \dots , j$. After re-ordering the basis elements of $V_1, \dots , V_k$, we may as well assume that our sequence $\{(i_u,i'_u)\}$ is of the form 
\begin{equation*}
(1,1),(1,2),\dots , (1,j_1) , (2,1) , \dots , (2,j_2) , \dots , (k , 1) , \dots , (k , j_k),
\end{equation*}
for some $(j_1 , \dots , j_k) \in J_j$. This sequence is exactly the elements of the set $S(j_1 , \dots , j_k)$. Then $\sigma$ is a permutation of $S(j_1 , \dots , j_k)$ and the number of $x_u$ belonging to $\mathfrak{n}$ is precisely the number of decreases of $\sigma$. The proposition now follows, since $\Dec(\sigma) \le \lambda(j_1 , \dots , j_k) \le \mu_j = j-m_{j-1}$, by Lemma \ref{l:mu}.
\end{proof}


\section{Local results for Types $B$, $C$, $D$, and $G_2$} \label{s:Types}
In this section, we prove of the inclusion $\subseteq$ of Theorem \ref{t:main} for Types $B$, $C$, $D$, and $G_2$.

\subsection{Type $B$ }

Let $\fg = \mathfrak{so}_{2n+1}$ which we view as the subalgebra of $\Sl_{2n+1}$ preserving a non-degenerate symmetric bilinear form. Let $V$ be the standard $2n+1$-dimensional representation. A parabolic subalgebra $\fp \subseteq \mathfrak{so}_{2n+1}$ can be described as the subalgebra of $\fg$ preserving a flag of isotropic subspaces
\begin{equation*}
\{ 0 \} = F_0 \subset F_1 \subset F_2 \subset \dots \subset F_k.
\end{equation*}
Clearly any $A \in \mathfrak{so}_{2n+1}$ preserving this flag also preserves the flag of length $2k+1$ given by
\begin{equation}\label{eq:flag2n+1}
\{ 0 \} = F_0 \subset F_1 \subset F_2 \subset \dots \subset F_k \subset F_k^\perp \subset F_{k-1}^\perp \subset \dots \subset F_{1}^\perp \subset F_0^\perp = V.
\end{equation}
Note that $F_k \neq F_k^\perp$ because $\dim(V)$ is odd. Let $r = \dim(F_k)$. Then $W := F_k^\perp/F_{k-1}$ inherits an orthogonal structure from $V$. Let $2s+1 = \dim(F_k^\perp/F_k)$ so that $2n+1 = 2r+2s+1$, or $n = r+s$. For $j = 1, \dots , k$ let $r_j = \dim(F_j/F_{j-1})$ so that $r = r_1 + r_2 + \dots + r_k$ is a partition of $r$. We may choose splittings $F_j = F_{j-1} \oplus V_j$, $F_k^\perp = F_k \oplus W$ and $F_{j-1}^\perp = F_j^\perp \oplus V_j^*$ so that
\begin{equation*}
V = V_1 \oplus V_2 \oplus \dots \oplus V_k \oplus W \oplus V_k^* \oplus \dots \oplus V_2^* \oplus V_1^*
\end{equation*}
and such that the orthogonal pairing pairs $V_j$ with $V_j^*$ and $W$ with itself. 

The flag of subspaces in $V$ given by (\ref{eq:flag2n+1}) defines a parabolic subalgebra $\tilde{\fp} \subseteq \Sl_{2n+1}$ for which $\fp = \tilde{\fp} \cap \mathfrak{so}_{2n+1}$. Let $\fn$ denote the nilpotent radical of $\fp \subseteq \mathfrak{so}_{2n+1}$ and similarly let $\tilde{\fn}$ be the nilpotent radical of $\tilde{\fp} \subseteq \Sl_{2n+1}$. It follows that $\fn = \tilde{\fn} \cap \mathfrak{so}_{2n+1}$. In particular, we have an inclusion
\begin{equation*}
\tp^\perp=t^{-1}\fn +\fg(\cO) \subseteq t^{-1}\tilde{\fn}  + \Sl_{2n+1}(\cO).
\end{equation*}
We will use this to obtain lower bounds for the valuations of the invariant polynomials $c_2 , c_4 , \dots , c_{2n}$ from the corresponding lower bounds obtained for type A in the previous section. Let $\tilde{\fl}$ be the Levi subalgebra of $\tilde{\fp}$ and $\tilde{m}_1 \le \tilde{m}_2 \le \dots \le \tilde{m}_{2n}$ the fundamental degrees of $\tilde{\fp}$ arranged in non-decreasing order. Then since we have proven Theorem \ref{t:main} for type A, we immediately have:
\begin{equation*}
\chi(\tp^\perp) \subseteq \bigoplus_{i=1}^{n} t^{-2i+\tilde{m}_{2i-1}}\cO.
\end{equation*}
Therefore, to prove the result for type $B$ it suffices to show the following:
\begin{prop}
We have an equality $m_i = \tilde{m}_{2i-1}$.
\end{prop}
\begin{proof}
Let $\delta_1 \ge \delta_2 \ge \dots \ge \delta_m$ be the conjugate partition to $r = r_1 + r_2 + \dots + r_k$. There are four cases to consider according to whether $m$ is even or odd and whether $m \le 2s$ or $m > 2s$. We give the proof in the case that $m$ is even and $m \le 2s$, the other cases being similar. The flag given by (\ref{eq:flag2n+1}) corresponds to the partition of $2n+1$ given by $(r_1, r_1,r_2,r_2, \dots , r_k, r_k , 2s+1)$. Since $m$ is even and $m \le 2s$ it follows that the sequence $\tilde{m}_0 , \tilde{m}_1 , \dots , \tilde{m}_{2n}$ (where we set $\tilde{m}_0 = 1$) is given by:
\begin{equation*}
\underbrace{1,1, \dots , 1}_{2\delta_1+1 \text{ times}}, \underbrace{2,2, \dots , 2}_{2\delta_2+1 \text{ times}}, \dots , \underbrace{m,m, \dots , m}_{2\delta_{m}+1 \text{ times}} , m+1 , m+2 , \dots , 2s , 2s+1.
\end{equation*}
Therefore, the subsequence $\tilde{m}_1 , \tilde{m}_3 , \dots , \tilde{m}_{2n-1}$ is given by:
\begin{equation*}
\underbrace{1,1, \dots , 1}_{\delta_1 \text{ times}}, \underbrace{2,2, \dots , 2}_{\delta_2+1 \text{ times}}, \underbrace{3,3, \dots , 3}_{\delta_3 \text{ times}}, \underbrace{4,4, \dots , 4}_{\delta_4+1 \text{ times}}, \dots , \underbrace{m,m, \dots , m}_{\delta_{m}+1 \text{ times}} , m+2 , m+4 , \dots , 2s.
\end{equation*}

On the other hand, the Levi subalgebra $\fl \subseteq \fp$ is given by
\begin{equation*}
\fl \cong \bigoplus_{i} End(V_i) \oplus \mathfrak{so}(W).
\end{equation*}
Thus the fundamental degrees of $\fl$ are $1,2, \dots , r_1 , 1 , 2 , \dots , r_2 , \dots , 1 , 2 , \dots , r_k , 2 , 4 , 6 , \dots , 2s$. The sequence $m_1 \le m_2 \le \dots \le m_n$ is obtained by putting the fundamental degrees in non-decreasing order. Clearly this is the same as the sequence $\tilde{m}_1 , \tilde{m}_3 , \dots , \tilde{m}_{2n-1}$ given above.
\end{proof}

\subsection{Type $C$} 

We proceed in much the same way as for type $B$. Let $\fg = \mathfrak{sp}_{2n}$ which we view as the subalgebra of $\Sl_{2n}$ preserving a symplectic form. Let $V$ be the standard $2n$-dimensional representation. A parabolic subalgebra $\fp \subseteq \mathfrak{sp}_{2n}$ can be described as the subalgebra of $\fg$ preserving a flag of isotropic subspaces
\begin{equation*}
\{ 0 \} = F_0 \subset F_1 \subset F_2 \subset \dots \subset F_k.
\end{equation*}
Any $A \in \mathfrak{sp}_{2n}$ preserving this flag also preserves the flag given by
\begin{equation}\label{eq:flag2n}
\{ 0 \} = F_0 \subset F_1 \subset F_2 \subset \dots \subset F_k \subseteq F_k^\perp \subset F_{k-1}^\perp \subset \dots \subset F_{1}^\perp \subset F_0^\perp = V.
\end{equation}
In the type $C$ case, the inclusion $F_k \subseteq F_k^\perp$ may be an equality (this happens precisely when $F_k$ is a Lagrangian). In any case the (possibly zero-dimensional) space $W := F_k^\perp/F_{k-1}$ inherits a symplectic structure from $V$. Let $2s = \dim(F_k^\perp/F_k)$ so that $2n = 2r+2s$ and note that $s$ may be zero. For $j = 1, \dots , k$ let $r_j = \dim(F_j/F_{j-1})$ so that $r = r_1 + r_2 + \dots + r_k$ is a partition of $r$. We may choose splittings $F_j = F_{j-1} \oplus V_j$, $F_k^\perp = F_k \oplus W$ and $F_{j-1}^\perp = F_j^\perp \oplus V_j^*$ so that
\begin{equation*}
V = V_1 \oplus V_2 \oplus \dots \oplus V_k \oplus W \oplus V_k^* \oplus \dots \oplus V_2^* \oplus V_1^*
\end{equation*}
and such that the symplectic form pairs $V_j$ with $V_j^*$ and $W$ with itself. 

The flag of subspaces in $V$ given by (\ref{eq:flag2n}) defines a parabolic subalgebra $\tilde{\fp} \subseteq \Sl_{2n}$ for which $\fp = \tilde{\fp} \cap \mathfrak{sp}_{2n}$. Let $\fn$ denote the nilpotent radical of $\fp \subseteq \mathfrak{sp}_{2n}$ and similarly define $\tilde{\fn}$ as the nilpotent radical of $\tilde{\fp} \subseteq \Sl_{2n}$. It follows that $\fn = \tilde{\fn} \cap \mathfrak{sp}_{2n}$ and we obtain an inclusion
\begin{equation*}
\tp^\perp=t^{-1}\fn +\fg(\cO) \subseteq t^{-1}\tilde{\fn} + \Sl_{2n}(\cO).
\end{equation*}
Let $\tilde{\fl}$ be the Levi subalgebra of $\tilde{\fp}$ and $\tilde{m}_1 \le \tilde{m}_2 \le \dots \le \tilde{m}_{2n}$ the fundamental degrees of $\tilde{\fp}$ arranged in non-decreasing order. As we have proven Theorem \ref{t:main} for type A, we immediately have:
\begin{equation*}
\chi(\tp^\perp) \subseteq \bigoplus_{i=1}^{n} t^{-2i+\tilde{m}_{2i-1}}\cO.
\end{equation*}
To prove the result for type $C$ it suffices to show the following:
\begin{prop}
We have an equality $m_i = \tilde{m}_{2i-1}$.
\end{prop}
\begin{proof}
The proof is almost identical to the type $B$ case. Let $\delta_1 \ge \delta_2 \ge \dots \ge \delta_m$ be the conjugate partition to $r = r_1 + r_2 + \dots + r_k$. There are four cases to consider according to whether $m$ is even or odd and whether $m \le 2s$ or $m > 2s$. We give the proof in the case that $m$ is even and $m \le 2s$. The flag given by (\ref{eq:flag2n}) corresponds to the partition of $2n$ given by $(r_1, r_1,r_2,r_2, \dots , r_k, r_k , 2s)$. Since $m$ is even and $m \le 2s$ it follows that the sequence $\tilde{m}_0 , \tilde{m}_1 , \dots , \tilde{m}_{2n}$ (where we set $\tilde{m}_0 = 1$) is given by:
\begin{equation*}
\underbrace{1,1, \dots , 1}_{2\delta_1+1 \text{ times}}, \underbrace{2,2, \dots , 2}_{2\delta_2+1 \text{ times}}, \dots , \underbrace{m,m, \dots , m}_{2\delta_{m}+1 \text{ times}} , m+1 , m+2 , \dots , 2s.
\end{equation*}
Therefore, the subsequence $\tilde{m}_1 , \tilde{m}_3 , \dots , \tilde{m}_{2n-1}$ is given by:
\begin{equation*}
\underbrace{1,1, \dots , 1}_{\delta_1 \text{ times}}, \underbrace{2,2, \dots , 2}_{\delta_2+1 \text{ times}}, \underbrace{3,3, \dots , 3}_{\delta_3 \text{ times}}, \underbrace{4,4, \dots , 4}_{\delta_4+1 \text{ times}}, \dots , \underbrace{m,m, \dots , m}_{\delta_{m}+1 \text{ times}} , m+2 , m+4 , \dots , 2s.
\end{equation*}

On the other hand, the Levi subalgebra $\fl \subseteq \fp$ is given by
\begin{equation*}
\fl \cong \bigoplus_{i} End(V_i) \oplus \mathfrak{sp}(W).
\end{equation*}
Thus the fundamental degrees of $\fl$ are $1,2, \dots , r_1 , 1 , 2 , \dots , r_2 , \dots , 1 , 2 , \dots , r_k , 2 , 4 , 6 , \dots , 2s$ (if $s=0$ the fundamental degrees are $1,2, \dots , r_1 , 1 , 2 , \dots , r_2 , \dots , 1 , 2 , \dots , r_k$). The sequence $m_1 \le m_2 \le \dots \le m_n$ is obtained by putting the fundamental degrees in non-decreasing order. Clearly this is the same as the sequence $\tilde{m}_1 , \tilde{m}_3 , \dots , \tilde{m}_{2n-1}$ given above.
\end{proof}

\subsection{Type $D$ under Assumption \ref{ass:main2}}\label{ss:typeD}

For a parabolic in type $D$ satisfying Assumption \ref{ass:main}, we can prove the results in much the same way as we have done for types $B$ and $C$. We omit the proof as the details are very similar to these two cases. 

\subsection{Type $G_2$} 

Let $\fg$ be the Lie algebra of type $G_2$ and $V$ its $7$-dimensional representation. The $G_2$ Lie algebra preserves a symmetric non-degenerate bilinear form $\langle \; , \; \rangle$ and a $3$-form $\varphi \in \wedge^3 V^*$. There are three parabolic subalgebras to consider, two maximal parabolics and the Borel subalgebra. We consider each of these separately.

\subsubsection{Stabiliser of an isotropic line} Let $F_1 \subset V$ be a $1$-dimensional isotropic subspace. Let $\fp \subset \fg$ be the maximal parabolic subalgebra preserving $F_1$. Suppose that $F_1$ is spanned by $v_1$. Set $F_3 = \{ v \in V \; | \; \varphi(v,v_1 , \; . \;  ) = 0 \}$. One can check that $F_3$ is a $3$-dimensional isotropic subspace of $V$ containing $F_1$, hence $\fp$ preserves the flag given by
\begin{equation}\label{eq:g2flag1}
\{0\} \subset F_1 \subset F_3 \subset F_3^\perp \subset F_1^\perp \subset V.
\end{equation}
Thus $\fp$ is contained in the parabolic subalgebra $\tilde{\fp}$ of $\Sl_7$ preserving this flag. Letting $\fn$ denote the nilpotent radical of $\fp$ and $\tilde{\fn}$ the nilpotent radical of $\tilde{\fp}$, we have an inclusion $\fn \subset \tilde{\fn}$. Hence we also have an inclusion $\tp^\perp=t^{-1}\fn +\fg(\cO) \subset t^{-1}\tilde{\fn} + \Sl_{7}(\cO)$. Let $\tilde{\fl}$ be the Levi subalgebra of $\tilde{\fp}$ and $\tilde{m}_1 \le \tilde{m}_2 \le \dots \le \tilde{m}_{6}$ the fundamental degrees of $\tilde{\fp}$ arranged in non-decreasing order. The flag (\ref{eq:g2flag1}) corresponds to the partition $7 = 1 + 2 + 1 + 1 + 2 + 1$, so we get $\tilde{m}_1 , \dots , \tilde{m}_6 = 1 , 1 , 1 , 1 , 1 , 2 , 2$. This gives an inclusion
\begin{equation*}
\chi(t \tp^\perp) \subseteq t^{\tilde{m}_1} \cO \oplus t^{\tilde{m}_5} \cO = t \cO \oplus t^2 \cO.
\end{equation*}
On the other hand, the Levi subalgebra of $\fp$ is isomorphic to $\Gl_2$, which has fundamental degrees $m_1 , m_2 = 1 , 2$. Thus $\chi(t \tp^\perp) \subseteq t^{m_1}\cO \oplus t^{m_2} \cO$ as required.

\subsubsection{Stabiliser of a $\varphi$-isotropic plane} Let $F_2 \subset V$ be a $2$-dimensional subspace of $V$. We say $F_2$ is $\varphi$-isotropic if $\varphi( v , w , \; . \; ) = 0$ for all $v,w \in F_2$. One can check that a $\varphi$-isotropic plane is also isotropic with respect to $\langle \; , \; \rangle$. Let $\fp \subset \fg$ be the subalgebra preserving a $\varphi$-isotropic plane $F_2$. Then $\fp$ is a maximal parabolic subalgebra. Clearly $\fp$ preserves the flag 
\begin{equation}\label{eq:g2flag2}
\{ 0 \} \subset F_2 \subset F_2^\perp \subset V.
\end{equation}
Let $\tilde{\fp}$ be the parabolic subalgebra of $\Sl_7$ preserving this flag. Letting $\fn$ denote the nilpotent radical of $\fp$ and $\tilde{\fn}$ the nilpotent radical of $\tilde{\fp}$, we have as usual inclusions $\fn \subset \tilde{\fn}$ and $\tp^\perp=t^{-1}\fn +\fg(\cO) \subset t^{-1}\tilde{\fn} + \Sl_{7}(\cO)$. Let $\tilde{\fl}$ be the Levi subalgebra of $\tilde{\fp}$ and $\tilde{m}_1 \le \tilde{m}_2 \le \dots \le \tilde{m}_{6}$ the fundamental degrees of $\tilde{\fp}$ arranged in non-decreasing order. The flag (\ref{eq:g2flag2}) corresponds to the partition $7 = 2 + 3 + 2$, so we get $\tilde{m}_1 , \dots , \tilde{m}_6 = 1 , 1 , 1 , 2 , 2 , 2 , 3$. This gives an inclusion
\begin{equation*}
\chi(t \tp^\perp) \subseteq t^{\tilde{m}_1} \cO \oplus t^{\tilde{m}_5} \cO = t \cO \oplus t^2 \cO.
\end{equation*}
The Levi subalgebra of $\fp$ is isomorphic to $\Gl_2$, which has fundamental degrees $m_1 , m_2 = 1 , 2$. Thus $\chi(t \tp^\perp) \subseteq t^{m_1}\cO \oplus t^{m_2} \cO$ as required.

\subsubsection{Borel subalgebra}

Suppose that $\fp \subset \fg$ is a Borel subalgebra and $\fn^-$ the opposite nilpotent radical. In this case $\fp$ will preserve a flag $F_1 \subset F_2$, where $F_1$ is an isotropic line and $F_2$ is a $\varphi$-isotropic plane. From (\ref{eq:g2flag1}) and (\ref{eq:g2flag2}) we see that $\fp$ preserves a maximal flag, which corresponds to the partition $7 = 1 + 1 + 1 + 1 + 1 + 1 + 1$. Arguing as in the previous cases we get an inclusion
\begin{equation*}
\chi(t \tp^\perp) \subseteq t \cO \oplus t \cO.
\end{equation*}
The Levi subalgebra of $\fp$ is isomorphic to $\Gl_1 \times \Gl_1$, which has fundamental degrees $m_1 = 1$, $m_2 = 1$. Thus $\chi(t \tp^\perp) \subseteq t^{m_1}\cO \oplus t^{m_2} \cO$ as required.


\section{Local results for Type $D$}\label{s:TypeD}

In this section we consider a parabolic in type $D$ which {\em need not satisfy Assumption \ref{ass:main2}}. Determining the base in this case presents new difficulties not seen in the other types. As such our approach will be quite different to the previous cases. On other hand, if the parabolic does satisfy Assumption \ref{ass:main2}, then it is easy to see that the description provided for the parabolic Hitchin base in this section coincides with the description given in Theorem \ref{t:main}. 

Let $\fg = \mathfrak{so}_{2n}$ and let $\fp \subseteq \fg$ be a parabolic subalgebra. Suppose that the Richardson nilpotent orbit associated to $\fp$ has Jordan blocks of sizes $\delta_1 \ge \delta_2 \ge \dots \ge \delta_{\mu}$. These have been computed in \cite{Spaltenstein1}, see also \cite{Duckworth}. Recall \cite[Chapter 5]{CM} that for each even number $2a$, the partition $(\delta_1 , \delta_2 , \dots , \delta_\mu)$ has an even number of parts of size $2a$. In particular it follows that $\mu$ is even. Moreover, for a Richardson nilpotent in $\mathfrak{so}_{2n}$, it follows from \cite{Spaltenstein1, Duckworth} that $\delta_{2j-1}$ and $\delta_{2j}$ are either both even or both odd, so even and odd parts occur in consecutive pairs.\\

To describe the base in type $D$, it will be convenient to make use of Newton polygons. Suppose that $p(\lambda , t) \in \cO[\lambda]$ is a monic polynomial of degree $2n$ in $\lambda$ with coefficients in $\cO = \mathbb{C}[\![t]\!]$. Write $p(\lambda , t)$ as:
\begin{equation*}
p(\lambda , t) = \lambda^{2n} + f_1 \lambda^{2n-1} + f_2 \lambda^{2n-2} + \dots + f_{2n} = \lambda^{2n} + \sum_{\substack{ 0 \le \beta < 2n \\ \alpha \ge 0} }\rho_{\alpha , \beta} \lambda^\beta t^\alpha,
\end{equation*}
where $f_1 , \dots , f_{2n} \in \cO$ and $\rho_{\alpha , \beta} \in \mathbb{C}$. 

\begin{defe} We say that $p(\lambda , t)$ lies in the Newton polygon of slopes $-\delta_1 , -\delta_2 , \dots , -\delta_{\mu}$ if for all $0 \le \beta < 2n$, we have that $\rho_{\alpha , \beta} \neq 0$ only if $\alpha \ge j$, where $j$ is the unique integer such that:
\begin{equation*}
\delta_1 + \dots + \delta_{j-1} < 2n- \beta \le \delta_1 + \dots + \delta_{j}.
\end{equation*}
\end{defe} 

For example, the Newton polygon with slopes $-4,-3,-1,-1$ is shown in Figure \ref{fig:newt}. The polynomial $p(\lambda,t) = \sum_{\alpha , \beta} \rho_{\alpha , \beta} \lambda^\beta t^\alpha$ lies in this Newton polygon provided that $\rho_{\alpha , \beta} \neq 0$ only for values of $(\alpha , \beta)$ contained in the shaded region.

\begin{rem} The polynomial $p=p(\lambda,t)$ defines a convex region $D_p$ of the plane whose boundary is what is known as the Newton polygon of $p(\lambda,t)$; cf.  \cite[II.6]{Neukirch}. On the other hand, the slopes $-\delta_1 , -\delta_2 , \dots , -\delta_{\mu}$ define another convex region in the plane $D'$ and the Newton polygon of $p$ lies in $D'$ (in the above sense) if and only if $D_p\subseteq D'$. 
\end{rem} 

\begin{figure}
\includegraphics{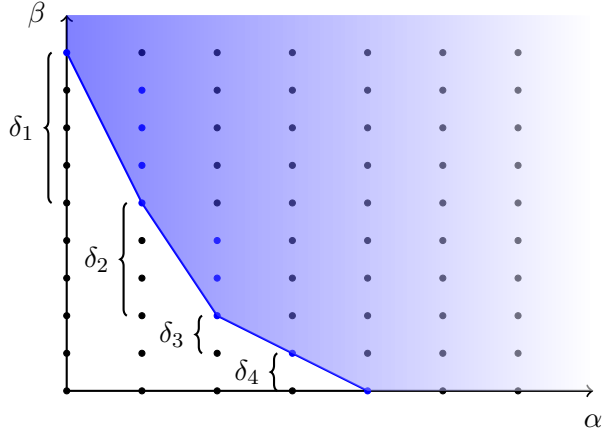}
\caption{Newton polygon for $(\delta_1,\delta_2,\delta_3,\delta_4) = (4,3,1,1)$.}\label{fig:newt}
\end{figure}

\begin{ntn}
Let $\fc:=\bC[\fg]^G$ denote the invariant ring. As mentioned in the introduction, Chevalley's theorem implies that if we choose a basis of invariant polynomials, then we get an isomorphism $\fc\simeq \bA^n$. On the other hand, the notation $\fc$ serves to remind us that this space has a nontrivial $\bGm$ action. We shall use this $\bGm$ action below.  
\end{ntn}

Recall the local Hitchin map $\chi : \fg(\cK) \to \fc(\cK) \cong \cK^n$, where we use the basis $(c_2 , c_4 , \dots , c_{2n-2} , p_n)$ of invariant polynomials described in Assumption \ref{ass:main} to identify $\fc$ with $\mathbb{A}^n$. Similarly $\chi : \fg(\cO) \to \fc(\cO) \cong \cO^n$, in particular $\chi( t \tp^\perp ) \subseteq \fc(\cO)$. Next we let $\cD'  \subseteq \fc(\cO) \cong \cO^n$ be defined as follows:
\begin{defe} Let $\fd'  \subseteq \fc(\cO) \cong \cO^n$ be the subset consisting of points $(c_2 , c_4 , \dots , c_{2n-2} , p_n )$ for which the corresponding characteristic polynomial $\lambda^{2n} + c_2 \lambda^{2n-2} + \dots + c_{2n-2} \lambda^2 + p_n^2$ lies in the Newton polygon of slopes $-\delta_1 , -\delta_2 , \dots , -\delta_{\mu}$.
\end{defe}

Let $\tilde{\delta}_1 > \tilde{\delta}_2 > \dots > \tilde{\delta}_r$ denote the distinct parts of $( \delta_1 , \dots , \delta_{\mu} )$ and let $e_1 , e_2 , \dots , e_r$ be the multiplicities, so $e_j$ is even whenever $\tilde{\delta}_j$ is even. Suppose that $\tilde{\delta}_j$ is even. The boundary of the Newton polygon has an edge segment of slope $-\tilde{\delta}_j$. The integer points $(\alpha , \beta) \in \mathbb{Z}^2$ lying on this segment are given by:
\begin{equation}\label{eq:relevantPairs}
(\alpha , \beta) = (e_1 + \dots +e_{j-1} , 2n - e_1 \tilde{\delta}_1 - \dots -e_{j-1}\tilde{\delta}_{j-1} ) + s( 1 , -\tilde{\delta}_j ), \quad \text{for } s = 0 , 1 , \dots , e_j.
\end{equation}
Let us denote these points as $( \alpha_{j,s} , \beta_{j,s} )$. Now suppose that $(c_2 , \dots , c_{2n-2} , p_n ) \in \fd'$ and write
\begin{equation}\label{eq:Newton}
\lambda^{2n} + c_2 \lambda^{2n-2} + \dots + c_{2n-2} \lambda^2 + p_n^2 = \lambda^{2n} + \sum_{\substack{ 0 \le \beta \leq 2n \\ \alpha \ge 0} }\rho_{\alpha , \beta} \lambda^\beta t^\alpha.
\end{equation}
For each $j$ such that $\tilde{\delta}_j$ is even, define a polynomial $q_j(u) \in \mathbb{C}[u]$ by
\begin{equation*}
q_j(u) = \sum_{s = 0}^{e_j} \rho_{\alpha_{j,s} , \beta_{j,s} } u^{e_j-s}.
\end{equation*}
Since for each $i$, we have that $\delta_{2i-1}, \delta_{2i}$ have the same parity, one sees that that $\alpha_{j,s} , \beta_{j,s}$ are both even whenever $e_j$ is even. 

\begin{defe}\label{def:d}
Let $\fd \subset \fd'$ be the subset consisting of $(c_2 , \dots , c_{2n-2} , p_n) \in \fd'$ satisfying the following property: for each $j$ such that $e_j$ is even we have that $q_j(u)$ is a square, i.e. $q_j(u) = r_j(u)^2$ for some $r_j(u) \in \mathbb{C}[u]$. 
\end{defe}

\begin{rem} \label{r:dimension}
Note that for each $j$, the condition that $q_j(u)$ is a square defines a subvariety of codimension $e_j/2$. Indeed, our condition that $q_j(u)$ is a square means that we require $q_j(u) \in S^{e_j}(\mathbb{C}^2)$ to lie in the image of the affine Veronese map $\mathbb{C}^{e_j/2+1} \cong S^{e_j/2} (\mathbb{C}^2) \to S^{e_j} (\mathbb{C}^2) \cong \mathbb{C}^{e_j + 1}$. The image is an affine subvariety of $\mathbb{C}^{e_j+1}$ of codimension $e_j/2$. 
\end{rem}

For $( c_2 , c_4 , \dots , c_{2n-2} , p_n ) \in \cK^n$, consider the following weighted action of $t^{-1}$:
\[
t^{-1} \bullet ( c_2 , c_4 , \dots , c_{2n-2} , p_n ) = ( t^{-2} c_2 , t^{-4} c_4 , \dots , t^{-(2n-2)} c_{2n-2} , t^{-n} p_n ).
\]
This action is defined so that $\chi( t^{-1}\phi ) = t^{-1} \bullet \chi(\phi)$.

\begin{prop}\label{p:localbaseD}
We have an inclusion $\chi( t \tp^\perp ) \subseteq \fd$ (or equivalently, $\chi( \tp^\perp ) \subseteq t^{-1} \bullet \fd$).
\end{prop}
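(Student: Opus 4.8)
The plan is to reduce the statement to the corresponding bound in type $A$, exactly as was done for types $B$ and $C$, and then extract the additional ``square'' condition from the fact that $\fg = \mathfrak{so}_{2n}$ forces the characteristic polynomial to be a polynomial in $\lambda^2$ whose constant term is a perfect square (the Pfaffian squared). Concretely, let $\fp \subseteq \mathfrak{so}_{2n}$ be a parabolic preserving an isotropic flag $\{0\} = F_0 \subset F_1 \subset \cdots \subset F_k$; as before, any $A \in \fp$ preserves the doubled flag
\[
\{0\} = F_0 \subset F_1 \subset \cdots \subset F_k \subseteq F_k^\perp \subset \cdots \subset F_1^\perp \subset F_0^\perp = V,
\]
so $\fp = \tilde{\fp} \cap \mathfrak{so}_{2n}$ for a parabolic $\tilde{\fp} \subseteq \Sl_{2n}$ and $\fn = \tilde{\fn} \cap \mathfrak{so}_{2n}$, giving $\tp^\perp \subseteq t^{-1}\tilde{\fn} + \Sl_{2n}(\cO)$. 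The partition attached to $\tilde{\fp}$ is the ``doubled'' partition of $2n$, and the Richardson orbit of $\fp$ in $\mathfrak{so}_{2n}$ has Jordan type $(\delta_1 , \dots , \delta_\mu)$. The first step is therefore to translate the type $A$ valuation bounds from Section \ref{s:TypeA}, namely that for $A \in t\tp^\perp$ the coefficient $f_\beta$ of $\lambda^{2n-\beta}$ in $\det(\lambda I - A)$ has valuation $\ge m_{\beta-1}$ (with $m_i$ the Levi degrees of $\tilde{\fp}$), into the statement that $\det(\lambda I - A)$ lies in the Newton polygon of slopes $-\delta_1 , \dots , -\delta_\mu$. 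This is a bookkeeping identity: one checks that $m_{\beta - 1}$ equals the integer $j$ with $\delta_1 + \cdots + \delta_{j-1} < 2n - \beta \le \delta_1 + \cdots + \delta_j$, using that the Jordan type of the Richardson orbit is the conjugate of (a rearrangement of) the block sizes of $\tilde{\fl}$. This already yields $\chi(t\tp^\perp) \subseteq \fd'$.

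The second step is to upgrade $\fd'$ to $\fd$, i.e.\ to show that for each $j$ with $e_j$ even, the polynomial $q_j(u)$ built from the coefficients $\rho_{\alpha_{j,s}, \beta_{j,s}}$ lying on the slope $-\tilde\delta_j$ edge is actually a perfect square. The key observation is that these edge coefficients are the ``leading terms'' of the characteristic polynomial along that edge, and I would compute them via the associated graded: passing to the Rees/graded picture with respect to the weighted $\bGm$-action $t^{-1} \bullet (-)$, the element $A \in t\tp^\perp$ degenerates to an element of (a twisted form of) the nilradical of a parabolic whose symmetry is still of type $D$ — that is, the orthogonal structure survives in the associated graded. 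Consequently the ``edge polynomial'' of $\det(\lambda I - A)$ is itself the characteristic polynomial of an element preserving a non-degenerate symmetric form on the relevant graded piece (of dimension $e_j \tilde\delta_j$), hence a polynomial in a square: when $\tilde\delta_j$ is even the relevant graded piece splits off an even-dimensional orthogonal block and its characteristic polynomial along the edge is $p(u)^2$ up to the substitution $u = \lambda^{\tilde\delta_j}$-type reindexing, which is exactly the assertion $q_j(u) = r_j(u)^2$. Alternatively, and perhaps more cleanly, one can argue directly: $\det(\lambda I - A) = \lambda^{2n} + c_2\lambda^{2n-2} + \cdots + c_{2n-2}\lambda^2 + p_n(A)^2$, and one reads off the edge polynomial $q_j$ associated to the bottom edge (the one through the constant term) as (a power of) $r$ where $p_n(A) = \text{(unit)}\, t^{?} r + \cdots$; for the remaining even edges one uses that the Newton polygon of a polynomial in $\lambda^2$ has the property that edge polynomials supported on even-spaced lattice points and coming from a form-preserving operator are squares, by the same orthogonal-degeneration argument applied to the quotient $F_{j'}^\perp / F_{j'}$ type subquotients.

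The third step is simply to note the equivalence $\chi(t\tp^\perp) \subseteq \fd \iff \chi(\tp^\perp) \subseteq t^{-1} \bullet \fd$, which is immediate from the definition $\chi(t^{-1}\phi) = t^{-1}\bullet\chi(\phi)$ and linearity $\tp^\perp = t^{-1}\cdot(t\tp^\perp)$ under the $\bGm$-action. The main obstacle is the second step: the type $A$ bound alone is blind to the extra symmetry, so one genuinely needs to understand how the orthogonal structure interacts with the Newton-polygon degeneration. I expect the cleanest route is the associated-graded argument — show that the leading-order (edge) part of $A \in t\tp^\perp$ preserves an induced non-degenerate symmetric form on each graded subquotient, so that the corresponding edge polynomial factors as a square whenever the subquotient is even-dimensional (equivalently $e_j$ even) — and the bulk of the real work will be setting up this graded degeneration carefully and identifying the induced form, especially keeping track of which edges correspond to even $\tilde\delta_j$ versus the special bottom edge governed by the Pfaffian. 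The case analysis (even/odd $\tilde\delta_j$, and whether $F_k$ is Lagrangian so that the central $\mathfrak{so}_{2s}$ block is present or absent) will mirror the four-case split already seen in types $B$ and $C$, but with the Pfaffian-square constraint adding the genuinely new ingredient.
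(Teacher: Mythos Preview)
Your first and third steps are fine (with one caveat: the claim that the $\mathfrak{so}_{2n}$ Richardson Jordan type equals the conjugate of the block sizes of $\tilde{\fl}$ is precisely what can fail for a bad parabolic in type $D$ --- that is the $D$-collapse phenomenon --- but since the type $A$ Richardson of $\tilde{\fp}$ dominates the type $D$ Richardson of $\fp$, the type $A$ Newton-polygon bound is at least as strong as $\fd'$, so the conclusion $\chi(t\tp^\perp)\subseteq\fd'$ survives).

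The real gap is in your second step. Your associated-graded sketch asserts that the edge polynomial $q_j(u)$ is the characteristic polynomial of an element preserving a non-degenerate symmetric form on a graded subquotient, and that this forces $q_j$ to be a perfect square. But the characteristic polynomial of an orthogonal endomorphism is merely an \emph{even} polynomial in $\lambda$, not a square in $\bC[\lambda]$; these are very different conditions, and there is no reason the edge polynomial along an interior even-slope segment should literally be a characteristic polynomial of anything. The ``orthogonal structure survives in the graded'' heuristic does not by itself produce the factorisation you need.

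The paper's proof proceeds quite differently. It first truncates everything modulo a high power of $t$ so that $\chi^{tr}$ becomes a polynomial map between finite-dimensional affine varieties and $\fd^{tr}$ is Zariski closed. This lets one restrict to the dense locus $\mathfrak{r}\subset t\tp^\perp$ of elements whose residue is a \emph{Richardson} nilpotent. For such generic elements the paper then invokes a result of Spaltenstein \cite[Proposition 6.4]{Spaltenstein2}: the characteristic polynomial factors over $\cO$ as
\[
p(\lambda,t)=p^{\mathrm{odd}}(\lambda,t)\prod_{\{j\,:\,e_j\text{ even}\}}p_j(\lambda,t)\,p_j(-\lambda,t),
\]
where $p_j$ collects the irreducible factors whose Newton polygon has slope $-\tilde\delta_j$. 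Because $\tilde\delta_j$ is even, $p_j(\lambda,t)$ and $p_j(-\lambda,t)$ have identical boundary terms along the slope $-\tilde\delta_j$ edge, and one reads off directly that $q_j(u)$ is the square of the edge polynomial of $p_j$. This Spaltenstein factorisation is the genuine nontrivial input; it is a statement about how the roots of the characteristic polynomial of an orthogonal element over a local field pair up within each Newton slope, and it does not follow from a naive graded-degeneration argument. The combination ``pass to a dense subset by closedness, then cite Spaltenstein'' is exactly the missing idea in your outline.
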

\begin{proof}
Let $\mathfrak{b} = \fc(\cO) \cong \cO^n$ and let $\chi : t \tp^\perp \to \mathfrak{b}$ be the local Hitchin map. Then $\fd \subseteq \mathfrak{b}$. We will define truncated versions of $\fd , \mathfrak{b} , \chi$ as follows. Let $\mathfrak{b}^{tr} = \fc(\cO)/ \fc(t \cO) = \left( \bigoplus_{j=1}^{n-1} \cO / t^{2j} \cO \right) \oplus \left( \cO/ t^n \cO \right)$ and $\pi : \mathfrak{b} \to \mathfrak{b}^{tr}$ the obvious projection. Let $\fd^{tr} \subseteq \mathfrak{b}^{tr}$ be the image of $\fd$ under $\pi$ and $\chi^{tr} : t \tp^\perp \to \mathfrak{b}^{tr}$ the composition $\chi^{tr} = \pi \circ \chi$. Then it suffices to show $\chi^{tr}( t \tp^\perp ) \subseteq \fd^{tr}$. Note also that $\chi^{tr}$ factors through the projection $t \tp^\perp \to t \tp^\perp /( t^{2n} \fg(\cO))$ and so we can view $\chi^{tr}$ as a map
\begin{equation*}
\chi^{tr} : t \tp^\perp / (t^{2n} \fg(\cO)) \to \mathfrak{b}^{tr}.
\end{equation*}
The point of doing this is that $t \tp^\perp / (t^{2n} \fg(\cO))$ and $\mathfrak{b}^{tr}$ are in a natural way finite dimensional complex affine spaces and $\chi^{tr}$ is easily seen to be a polynomial map between these spaces. It is also clear that $\fd^{tr}$ is an affine subvariety of $\mathfrak{b}^{tr}$, in particular it is Zariski closed. Therefore it is enough to show that $\chi^{tr}( \mathfrak{r} ) \subseteq \fd^{tr}$, where $\mathfrak{r} \subset t \tp^\perp $ are the elements of the form $t \phi$, where $\phi \in \tp^\perp$ has residue a Richardson nilpotent. It also follows that it is sufficient to show $\chi( \mathfrak{r} ) \subseteq \fd$. 

By our results in type $A$ and using the inclusion $\mathfrak{so}_{2n} \subset \mathfrak{sl}_{2n}$, we immediately see that $\chi( \mathfrak{r} ) \subseteq \fd'$. In other words, we have that the image of $\chi$ on elements $t \phi$ where $\phi$ has a residue with Jordan blocks of sizes $\delta_1 \ge \delta_2 \ge \dots \ge \delta_{\mu}$ lies in the corresponding Newton polygon of slopes $-\delta_1, -\delta_2 , \dots , -\delta_{\mu}$. For $t\phi \in \mathfrak{r}$, let $p(\lambda , t) = \det( \lambda - t\phi) \in \cO[\lambda]$ be the characteristic polynomial. From \cite[Proposition 6.4]{Spaltenstein2}, we know that (for generic elements of $\mathfrak{r}$) $p(\lambda , t)$ admits a factorisation of the form
\begin{equation*}
p(\lambda , t) = p^{odd}(\lambda , t) \prod_{ \{j \; | \; e_j \; \text{even} \}} p_j(\lambda , t)p_j(-\lambda , t),
\end{equation*}
where $p^{odd}(\lambda , t)$ is the product of all monic irreducible factors whose Newton polygon has odd slope and $p_j(\lambda , t)p_j(-\lambda , t)$ is the product of all monic irreducible factors whose Newton polygon has slope $e_j$. The fact that this product can be written in the form $p_j(\lambda , t)p_j(-\lambda , t)$ for some $p_j(\lambda , t) \in \cO[\lambda]$ is part of the statement of Proposition 6.4 in \cite{Spaltenstein2}.

We have that $p_j(\lambda , t)$ is a monic, degree $\tilde{\delta}_j e_j/2$ polynomial in $\lambda$ lying on the Newton polygon of slope $-\tilde{\delta}_j$. Thus it has the form
\begin{equation*}
p_j(\lambda , t) = \lambda^{\tilde{\delta}_je_j/2} + y_1 t \lambda^{\tilde{\delta}_j (e_j/2-1)} + y_2 t^2 \lambda^{\tilde{\delta}_j (e_j/2 - 2)} + \dots + y_{e_j/2}t^{e_j/2} 
+ s_j(\lambda,t),
\end{equation*}
where $y_1 , y_2 , \dots , y_{e_j/2} \in \mathbb{C}$ and $s_j(\lambda , t)$ consists only of terms which do not lie on the boundary of the Newton polygon of slope $-\tilde{\delta}_j$. As $\tilde{\delta}_j$ is even, we have:
\begin{equation*}
p_j(-\lambda , t) = \lambda^{\tilde{\delta}_je_j/2} + y_1 t \lambda^{\tilde{\delta}_j (e_j/2-1)} + y_2 t^2 \lambda^{\tilde{\delta}_j (e_j/2 - 2)} + \dots + y_{e_j/2}t^{e_j/2} 
+ s_j(-\lambda,t).
\end{equation*}
It follows that 
\begin{equation*}
q_j(u) = c \left( u^{e_j/2} + y_1 u^{e_j/2 - 1} + \dots + y_{e_j/2} \right)^2
\end{equation*}
for some constant $c$. Hence $\chi( t\phi ) \in \fd$.
\end{proof}

\begin{exam}\label{ex:so}
Here we consider an example of a parabolic in $\fp \subset \mathfrak{so}_{10}$ for which the local truncated base $\fd^{tr}$ is singular. The corresponding global Hitchin base will have corresponding singularities. Let $\fp$ be the parabolic subalgebra in $\mathfrak{so}_{10}$ obtained by crossing off the two end nodes.

\begin{center}
\begin{tikzpicture}
\draw [thick] (2,0) -- (4,0) ;
\draw [thick] (4,0) -- (4.9,0.45);
\draw [thick] (4,0) -- (4.9,-0.45) ;

\draw [fill] (2,0) circle(0.1);
\draw [fill] (3,0) circle(0.1);
\draw [fill] (4,0) circle(0.1);

\draw[thick] (4.95,0.55) -- (5.15 , 0.35);
\draw[thick] (5.15,0.55) -- (4.95 , 0.35);

\draw[thick] (4.95,-0.55) -- (5.15 , -0.35);
\draw[thick] (5.15,-0.55) -- (4.95 , -0.35);

\end{tikzpicture}
\end{center}

From \cite{Duckworth}, the Richardson has Jordan blocks $(\delta_1 , \delta_2 , \delta_3 , \delta_4) = (3,3,2,2)$. Thus $\tilde{\delta}_1 = 3$, $\tilde{\delta}_2 = 2$, $e_1 = e_2 = 2$. Writing out \eqref{eq:Newton}, we get 
\begin{multline}
\lambda^{10} + c_2 \lambda^{8} + c_4 \lambda^6 + c_6\lambda^4+c_4\lambda^6+c_2\lambda^2+c_0=\\
\lambda^{10}+ 
\lambda^6(\sum_{\alpha\geq 0} \rho_{\alpha,6}) t^\alpha) + 
\lambda^8(\sum_{\alpha\geq 0} \rho_{\alpha,8}t^\alpha) +
\lambda^8(\sum_{\alpha\geq 0} \rho_{\alpha,8}t^\alpha) +
\lambda^8(\sum_{\alpha\geq 0} \rho_{\alpha,8}t^\alpha) +
\lambda^8(\sum_{\alpha\geq 0} \rho_{\alpha,8}t^\alpha)
\end{multline}
The relevant pairs specified by \eqref{eq:relevantPairs} are
\[
(\alpha,\beta)=(2,4)+s(1,-2), \,\, s=0,1,2,\implies (\alpha,\beta)=(2,4), (3,2), (4,0). 
\]
Let $x = \rho_{2,4} , y = \rho_{3,2} , z = \rho_{4,0}$. These are the coefficients of the characteristic polynomial lying on the edges of the Newton polygon with slope $-2$. Thus $q_2(u) = xu^2 + yu + z$. We also have that $z = w^2$, where $w$ is the $t^2$-coefficient of the pfaffian $p_n$. We require that $q_2(u)$ is a square, i.e.
\begin{equation*}
xu^2 + yu + w^2 = ( au + b)^2
\end{equation*}
for some $a,b$. Expanding this gives $x = a^2$, $y = 2ab$, $w^2 = b^2$. Thus $(x,y,w)$ lie on the singular hypersurface $V\subset \bC^3$ given by
\begin{equation}\label{e:hyper}
y^2 = 4xw^2.
\end{equation}
Let $(\fd')^{tr}$ be the image of $\fd'$ under the truncation map $\mathfrak{b} \to \mathfrak{b}^{tr}$. Then
\[
(\fd')^{tr} \cong \frac{ t\cO}{t^2 \cO} \oplus \frac{ t^2\cO}{t^4 \cO} \oplus \frac{ t^3\cO}{t^6 \cO} \oplus \frac{ t^2\cO}{t^4 \cO},
\]
which is an affine space of dimension $8$. Therefore, it follows that 
\[ 
\fd^{tr} \simeq \mathbb{A}^{5} \times V. 
\]
\end{exam}

\section{Proof of the main global results}

\subsection{Proof of Theorem \ref{t:main2}}

\begin{thm}
Under Assumptions \ref{ass:main} and \ref{ass:main2}, we have an isomorphism 
\[
\cA_{G,P} = \bigoplus_i \Gamma(X, \Omega^{d_i} ((d_i-m_i).x)).
\]
In particular, $\cA_{G,P}$ is an affine space of the same dimension as $Bun_{G,P}$.
\end{thm}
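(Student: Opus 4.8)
The plan is to prove the two inclusions separately, as outlined in \S\ref{s:intro}: the inclusion $\subseteq$ reduces to the local Theorem \ref{t:main}, while the reverse inclusion follows by comparing dimensions. For $\subseteq$: given a parabolic Higgs bundle $(\cE,\phi)$ of type $(P,x)$, the section $Q_i(\phi)$ lies a priori in $\Gamma(X,\Omega^{d_i}(d_i.x))$ and is regular away from $x$; near $x$, after choosing a local coordinate $t$ and trivialisations of $\cE$ and $\Omega$, the germ of $\phi$ is an element of $\tp^\perp=t^{-1}\fn\oplus\fg(\cO)$, so the already-established inclusion $\chi(\tp^\perp)\subseteq\bigoplus_i t^{-d_i+m_i}\cO$ (proven in \S\ref{s:TypeA} and \S\ref{s:Types} under Assumption \ref{ass:main2}) forces $Q_i(\phi)$ to have a pole of order $\le d_i-m_i$ at $x$, i.e.\ $Q_i(\phi)\in\Gamma(X,\Omega^{d_i}((d_i-m_i).x))$. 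Since the latter is a linear, hence Zariski closed, subspace of $\Gamma(X,\Omega^{d_i}(d_i.x))$, the closure $\cA_{G,P}$ of the image of $h_{G,P}$ is contained in $\bigoplus_i\Gamma(X,\Omega^{d_i}((d_i-m_i).x))$.

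For the reverse inclusion, observe that $\cA_{G,P}$ is a Zariski closed subset of the irreducible affine space $\bigoplus_i\Gamma(X,\Omega^{d_i}((d_i-m_i).x))$, so it suffices to show the two have equal dimension. By Equation \eqref{eq:dim} (established in \cite{BKV} for arbitrary $G$ and $P$) we have $\dim\cA_{G,P}=(g-1)\dim(G)+\dim(G/P)$. For the other side I would compute via Riemann--Roch. Each line bundle $\Omega^{d_i}((d_i-m_i).x)$ has degree $d_i(2g-2)+d_i-m_i$, and a short check shows this exceeds $2g-2$ for every $i$ (the inequality is $(d_i-1)(2g-2)>m_i-d_i$, and $m_i\le d_i$ in every slot except the type $D$ slot paired with the Pfaffian, where $d_i=n$, $m_i\le 2n-2$, so $m_i-d_i\le n-2<2(n-1)\le(d_i-1)(2g-2)$). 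Hence the higher cohomology vanishes, $h^0=d_i(2g-2)+d_i-m_i-g+1$, and
\[
\dim\Big(\bigoplus_i\Gamma(X,\Omega^{d_i}((d_i-m_i).x))\Big)=(2g-2)\sum_i d_i+\sum_i(d_i-m_i)-n(g-1).
\]
Now $\sum_i d_i=\tfrac12(\dim G+n)$ (from $\sum_i(d_i-1)=|\Phi^+|$ and $\dim G=n+2|\Phi^+|$), while $\sum_i(d_i-m_i)=|\Phi^+|-|\Phi^+_{\fl}|=\dim(G/P)$ because $\fl$ has rank $n$, so $\sum_i(m_i-1)=|\Phi^+_{\fl}|$; crucially this sum does not depend on how the $d_i$ are paired with the $m_i$, so the non-standard type $D$ ordering of Assumption \ref{ass:main} is irrelevant here. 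Substituting yields $(g-1)(\dim G+n)+\dim(G/P)-n(g-1)=(g-1)\dim(G)+\dim(G/P)=\dim\cA_{G,P}$. A closed subvariety of full dimension inside an irreducible variety equals it, so $\cA_{G,P}=\bigoplus_i\Gamma(X,\Omega^{d_i}((d_i-m_i).x))$; the ``in particular'' assertion is then immediate, since the target is a vector space whose dimension equals $\dim\Bun_{G,P}$ by \eqref{eq:dim}.

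I do not expect a genuine obstacle in this part: the substantive inputs are the local inclusion of Theorem \ref{t:main} (supplied by the earlier sections) and the dimension formula \eqref{eq:dim} (imported from \cite{BKV}), and what remains is bookkeeping with Riemann--Roch and the classical identities relating fundamental degrees to numbers of positive roots. The two points deserving a moment's care are verifying that every relevant line bundle has degree $>2g-2$, so that Riemann--Roch computes $h^0$ with no correction term, and noticing that $\sum_i(d_i-m_i)$ is insensitive to the pairing of the degrees; neither is difficult.
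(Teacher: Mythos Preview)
Your argument is correct and follows the same two-step strategy as the paper: deduce the inclusion $\subseteq$ from the local bound $\chi(\tp^\perp)\subseteq\bigoplus_i t^{-d_i+m_i}\cO$ established in \S\ref{s:TypeA}--\S\ref{s:Types}, then upgrade to equality by matching $\dim\cA_{G,P}$ (from \cite{BKV}) against a Riemann--Roch count of $\dim\bigoplus_i\Gamma(X,\Omega^{d_i}((d_i-m_i).x))$. Your unpacking of the identities $\sum_i(2d_i-1)=\dim G$ and $\sum_i(d_i-m_i)=\dim(G/P)$ via positive roots is exactly what underlies the paper's more compressed display, and your observation that the pairing of the $d_i$ with the $m_i$ is irrelevant for the sum is a useful sanity check.

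One point where you are actually more careful than the paper: you verify directly that $\deg\Omega^{d_i}((d_i-m_i).x)>2g-2$ even in the type $D$ Pfaffian slot, where $d_n=n$ but $m_n$ can exceed $n$ (e.g.\ for $\fg=\mathfrak{so}_{10}$ with Levi $\mathfrak{gl}_1\times\mathfrak{so}_8$ one finds $m_5=6>5=d_5$). The paper simply writes ``noting that $d_i\ge m_i$'', which is not literally true in all type $D$ cases covered by Assumption~\ref{ass:main2}; your bound $m_i-d_i\le n-2<(d_i-1)(2g-2)$ is the correct justification for applying Riemann--Roch without an $h^1$ term.
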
 

\begin{proof}
The results of Sections \ref{s:TypeA} and \ref{s:Types} implies the $\subseteq$ inclusion of Theorem \ref{t:main}, that is:
\[
\fa_{\fg,\fp} \subseteq \bigoplus_{i=1}^n  t^{-d_i+m_i}\cO.
\]
If $(E,\phi) \in Bun_{G,P}$, then it is easy to see that the germ of $\phi$ at $x$ can be identified with an element of $\tp^\perp = t^{-1} \fn \oplus \fg (\cO)$. Therefore, the germ of $h_{G,P}(E,\phi) \in \bigoplus_{i=1}^n \Gamma( X , \Omega^{d_i}( d_i . x ) )$ at $x$ lies in $\chi( p^\perp ) \subseteq \bigoplus_{i=1}^n  t^{-d_i+m_i}\cO$. It follows that $h_{G,P}(E,\phi) \in \bigoplus_i \Gamma(X, \Omega^{d_i} ((d_i-m_i).x))$, which establishes the inclusion $\cA_{G,P} \subseteq \bigoplus_i \Gamma(X, \Omega^{d_i} ((d_i-m_i).x))$.

Since $\cA_{G,P}$ is a closed subvariety of $\bigoplus_{i=1}^n \Gamma( X , \Omega^{d_i}( d_i . x ) )$, to show that the inclusion $\cA_{G,P} \subseteq \bigoplus_i \Gamma(X, \Omega^{d_i} ((d_i-m_i).x))$ is an equality, it suffices to show both sides have the same dimension. From \cite{BKV}, we have
\[
\dim( \cA_{G,P} ) = \dim( Bun_{G,P} ) = \dim( Bun_G) + \dim(G/P) = \dim(G)(g-1) + \dim(G/P).
\]
On the other hand, by Riemann-Roch and noting that $d_i \ge m_i$, we have:
\begin{equation*}
\begin{aligned}
\dim \left( \bigoplus_i \Gamma(X, \Omega^{d_i} ((d_i-m_i).x)) \right) &= \sum_i (2d_i-1)(g-1) + (d_i-m_i) \\
&= \dim(G)(g-1) + \frac{1}{2}\left( \dim(G) - \dim(L) \right) \\
&=\dim(G)(g-1) + \dim(G/P),
\end{aligned}
\end{equation*}
where $L$ denotes the Levi subgroup of $P$. This gives the desired equality of dimensions.
\end{proof}

\begin{cor}
Let $(G,P)$ satisfy Assumption \ref{ass:main2}. Then the Hitchin map $h_{G,P} : \cM_{G,P} \to \cA_{G,P}$ is flat and surjective.
\end{cor}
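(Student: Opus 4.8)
The strategy is to combine Theorem~\ref{t:main2} (which says $\cA_{G,P}$ is a \emph{smooth} affine variety) with the dimension count \eqref{eq:dim} and the fact that $\cM_{G,P}$ is a twisted cotangent stack, so that flatness follows from ``miracle flatness''; surjectivity is then obtained separately from properness of the Hitchin map on the semistable locus.

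\emph{Surjectivity.} By Chevalley's theorem the image of $h_{G,P}$ is constructible, and by the very definition of $\cA_{G,P}$ it is dense; since $\cA_{G,P}$ is an affine space, hence irreducible, the image therefore contains a dense open subset. To see it is all of $\cA_{G,P}$, I would restrict $h_{G,P}$ to the open substack of semistable parabolic Higgs bundles for a generic stability parameter; the induced map from the resulting quasi-projective moduli space to $\cA_{G,P}$ is proper, so its image is closed. For a generic point of $\cA_{G,P}$ the associated spectral curve is integral and every parabolic Higgs bundle lying over it is automatically stable, so this image also contains a dense open of the irreducible variety $\cA_{G,P}$, and hence equals it. Thus every point of $\cA_{G,P}$ is hit, i.e.\ $h_{G,P}$ is surjective. (In Type $A$ one may instead use the globalisation of the companion-matrix section from \S\ref{ss:section}, which splits $h_{G,P}$ directly.)

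\emph{Flatness.} I would apply miracle flatness: a finite-type morphism $f\colon X\to Y$ with $X$ Cohen--Macaulay, $Y$ regular, and all nonempty fibres of dimension $\dim X-\dim Y$ is flat; one checks this on a smooth atlas of $\cM_{G,P}$. Here $\cM_{G,P}$ is, via Serre duality, the cotangent stack $T^{*}\Bun_{G,P}$ of the smooth stack $\Bun_{G,P}$ (the condition $\Res_{x}\phi\in\fn$ is exactly what identifies strongly parabolic Higgs fields with cotangent vectors), so $\cM_{G,P}$ is smooth, in particular Cohen--Macaulay; and $\cA_{G,P}$ is regular by Theorem~\ref{t:main2}. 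From \eqref{eq:dim} we have $\dim\cM_{G,P}-\dim\cA_{G,P}=\tfrac{1}{2}\dim\cM_{G,P}=\dim\Bun_{G,P}$, so it remains to verify that every fibre $h_{G,P}^{-1}(a)$ has exactly this dimension. Since $h_{G,P}$ is surjective and $\cM_{G,P},\cA_{G,P}$ are irreducible, every fibre has dimension at least $\dim\Bun_{G,P}$. For the reverse bound I would use that the locus $\{a : \dim h_{G,P}^{-1}(a)>\dim\Bun_{G,P}\}$ is closed by upper semicontinuity of fibre dimension and invariant under the $\bGm$-action on $\cA_{G,P}$ of positive weights $d_{1},\dots,d_{n}$, which contracts $\cA_{G,P}$ to $0$; hence if this locus were nonempty it would contain $0$. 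But $h_{G,P}^{-1}(0)$ is the parabolic global nilpotent cone, whose dimension is $\dim\Bun_{G,P}$ (this is implicit in \eqref{eq:dim}, cf.\ \cite{BKV}). Therefore all fibres are equidimensional of the expected dimension, and $h_{G,P}$ is flat.

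\emph{Main obstacle.} The only genuinely non-formal input is the estimate $\dim h_{G,P}^{-1}(0)\le\dim\Bun_{G,P}$ for the fibre over the origin, i.e.\ the Lagrangian property of the parabolic global nilpotent cone --- the parabolic analogue of the Laumon--Ginzburg--Faltings theorem. I would take this (together with the rest of \eqref{eq:dim}) from \cite{BKV}; once it is available, both flatness and surjectivity are soft consequences.
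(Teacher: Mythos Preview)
Your argument is correct and is essentially an unpacking of what the paper does in one line: the paper simply observes that $\cA_{G,P}$ is nonsingular (being an affine space by Theorem~\ref{t:main2}) and then invokes \cite[Corollary~11(i)]{BKV}, which already packages the miracle-flatness argument together with the Lagrangian property of the parabolic global nilpotent cone. So the approaches coincide, yours just makes the mechanism explicit rather than citing the black box.

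Two small technical remarks. First, upper semicontinuity of fibre dimension is a statement on the \emph{source}, not the target: the set $\{a\in\cA_{G,P}:\dim h_{G,P}^{-1}(a)>\dim\Bun_{G,P}\}$ is a priori only constructible. The clean way to run your $\bGm$-contraction argument is on $\cM_{G,P}$: the closed $\bGm$-invariant locus $\{x:\dim_x h_{G,P}^{-1}(h_{G,P}(x))>\dim\Bun_{G,P}\}$, if nonempty, must meet the zero-section $(E,0)$ by letting $t\to 0$ in $(E,t\phi)$, contradicting the nilpotent-cone bound. Second, your separate surjectivity argument via properness on the semistable locus is fine but unnecessary: once flatness is established the image is open, $\bGm$-stable, and contains $0$, hence its (closed, $\bGm$-stable) complement would have to contain $0$ as well, which is absurd.
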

\begin{proof}
In this case $\cA_{G,P}$ is non-singular, because it is an affine space. Now the result is a consequence of \cite[Corollary 11 (i)]{BKV}. 
\end{proof}

\subsection{Global image in type D}\label{ss:typeDglobal}
In this subsection, we consider the image of the parabolic Hitchin map in type $D$. In Section \ref{s:TypeD}, we defined a subspace $\fd \subset \fc(\cO) \cong \cO^n$ for which $\chi( t \tp^\perp ) \subseteq \fd$. Here we will define a global analogue of this space, or more precisely, a global analogue of $t^{-1}\bullet \fd$. Define $\cD \subseteq  \bigoplus_{j=1}^n \Gamma( X , \Omega^{d_j}( d_j x) )$ as follows. Let $(c_2 , c_4 , \dots , c_{2n-2} , p_n ) \in \bigoplus_{j=1}^{n} \Gamma( X , \Omega^{d_j}( d_j x))$. We say $(c_2 , \dots , c_{2n-2} , p_n ) \in \cD$ if 
\[
(t^2 c_2 , t^4 c_4 \dots , t^{2n-2} c_{2n-2} , t^n p_n)|_{\cO_x} \in \fd,
\]
 where $\cO_x$ is the completed local ring of $X$ at $x$, which may be identified with $\cO = \mathbb{C}[\![t]]$. Note that the resulting subvariety $\cD$ is independent of the choice of isomorphism $\cO_x \cong \cO$. We will eventually show that $\cD = \cA_{G,P}$ in type $D$.

\begin{prop}\label{p:irredcomponents}
Let $\delta_1 \ge \delta_2 \ge \dots \ge \delta_\mu$ be the sizes of the Jordan blocks of the Richardson nilpotent orbit corresponding to $\fp$. If at least one of the $\delta_j$ is odd then $\cD$ is irreducible. If all the $\delta_j$ are even, then $\cD$ consists of two irreducible components $\cD^+  , \cD^-$. In this case $\cD^+ , \cD^-$ are affine subspaces of $\bigoplus_{j=1}^n \Gamma( X , \Omega^{d_j}( d_j x))$ and are interchanged by the map $(c_2 , \dots , c_{2n-2} , p_n) \mapsto (c_2 , \dots , c_{2n-2} , -p_n)$.
\end{prop}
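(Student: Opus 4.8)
The plan is to analyze the structure of $\cD$ by unwinding the definition in terms of the local space $\fd$ and the Newton polygon / Veronese descriptions established in Section \ref{s:TypeD}. The key observation is that the only non-affine constraint defining $\fd$ (and hence, after globalising, $\cD$) comes from the conditions ``$q_j(u)$ is a square'' in Definition \ref{def:d}, which are imposed exactly for those $j$ with $e_j$ even, i.e. for those indices $j$ with $\tilde\delta_j$ even. The logical split in the statement is whether any $\delta_j$ is odd or all are even; this is precisely the dichotomy governing how many squaring conditions are ``binding at top order'' and whether the Pfaffian $p_n$ itself is forced to factor compatibly. First I would record that, in either case, $\cD$ is a closed subscheme of the affine space $\bigoplus_j \Gamma(X,\Omega^{d_j}(d_j x))$, since it is pulled back from the closed subscheme $\fd\subseteq\fc(\cO)$ via the (algebraic, affine) truncation-and-rescaling map of Section \ref{s:TypeD}; irreducibility and the component count can therefore be checked fibrewise over finitely many truncation levels, exactly as in the proof of Proposition \ref{p:localbaseD}.

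The main structural input is the following: each squaring condition $q_j(u)=r_j(u)^2$ expresses the $e_j{+}1$ boundary coefficients $\rho_{\alpha_{j,s},\beta_{j,s}}$ as the image of the affine Veronese map $S^{e_j/2}(\bC^2)\to S^{e_j}(\bC^2)$ (Remark \ref{r:dimension}), whose source is an affine space $\bC^{e_j/2+1}$ and which is a closed immersion away from the origin but ramifies along the locus $\{a=0\}$ (scalar squares). So $\cD$ is a product of Veronese-type cones over the various even $\tilde\delta_j$, times an honest affine space in the remaining (unconstrained) coefficients. The crucial point is whether the sign of the leading coefficient $a=r_j$ of each $r_j$ is determined. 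If some $\delta_j$ is odd, then the leading coefficient of the characteristic polynomial along the relevant top edge is pinned by the ``odd part'' factor $p^{odd}(\lambda,t)$ and by the constraint $p_n^2 = c_{2n}$, so the Veronese maps in question are unramified at the relevant points — equivalently, the square roots $r_j(u)$ are globally single-valued — and $\cD$ is cut out by equations of the form $(\text{linear})^2 = (\text{linear})^2$ whose solution set, after absorbing the sign into a free parameter, is an irreducible affine space; I would make this precise by exhibiting a surjection from an affine space onto $\cD$ with irreducible (geometrically connected) fibres. If all $\delta_j$ are even, then there is no odd factor, the full characteristic polynomial is a product $\prod_j p_j(\lambda,t)p_j(-\lambda,t)$, and the Pfaffian $p_n$ equals, up to sign, $\prod_j p_j(\lambda,t)$ evaluated appropriately — so the global choice of sign of $p_n$ is a genuine $\bZ/2$ ambiguity, cutting $\cD$ into two affine pieces $\cD^\pm$ distinguished by the sign of $p_n$ (equivalently, by which square root $\pm r_j$ is consistently chosen), and the map $(c_\bullet,p_n)\mapsto(c_\bullet,-p_n)$ visibly swaps them while fixing the $c_{2i}$, since it negates the Pfaffian but preserves $c_{2n}=p_n^2$ and all other invariants.

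Concretely I would proceed in the following steps. (1) Reduce to a finite-dimensional statement: replace $\cD$ by its image $\cD^{tr}$ in the truncated affine space $\mathfrak b^{tr}$, as in Proposition \ref{p:localbaseD}, and note $\cD\cong\cD^{tr}\times(\text{affine space})$ with the affine factor coming from Riemann--Roch on $X$ (the sections of $\Omega^{d_j}((d_j-m_j)x)$ beyond the truncation level contribute only free parameters). (2) Using \cite[Proposition 6.4]{Spaltenstein2} as invoked in the proof of Proposition \ref{p:localbaseD}, describe $\cD^{tr}$ explicitly as the image of the product of affine Veronese maps attached to the even $\tilde\delta_j$'s, times an affine space; this identifies $\cD^{tr}$ with a product of affine cones. (3) For the irreducible case, check that the relevant sign is determined — either because some factor has odd slope (so the top coefficient is forced nonzero with a fixed value dictated by $c_{2n}$), or because the parity argument shows the square root extends — hence each Veronese map restricts to an isomorphism onto its image there and $\cD$ is irreducible and in fact an affine space. (4) For the all-even case, show the preimage of $\cD$ under ``forget the sign of $p_n$'' has two components, identify them with $\cD^\pm$, verify each is a closed affine subspace (the Veronese image together with the fixed sign convention is linearly parametrised), and check the swap. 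The hard part will be step (2)–(3): one has to pin down, using Spaltenstein's factorisation and the interplay between the Pfaffian $p_n$ and the characteristic polynomial, exactly which coefficients are free and which are squares, and in particular to justify that in the ``some $\delta_j$ odd'' case no $\bZ/2$ ambiguity survives — this is where a careful bookkeeping of Newton-polygon edges and the relation $p_n^2=c_{2n}$ is essential.
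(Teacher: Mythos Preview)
Your overall strategy --- reduce to a finite-dimensional truncation, factor off an affine-space part, and analyse the remaining constraints coming from the ``$q_j$ is a square'' conditions --- matches the paper's approach. However, there is a genuine error in step (3), and a misplaced appeal to Spaltenstein in step (2).

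The error: you assert that in the ``some $\delta_j$ odd'' case the Veronese maps are unramified at the relevant points, the square roots are single-valued, and hence $\cD$ is ``in fact an affine space''. This is false. Example \ref{ex:so} in the paper exhibits a parabolic with $(\delta_1,\delta_2,\delta_3,\delta_4)=(3,3,2,2)$ --- so some $\delta_j$ is odd --- for which $\fd^{tr}\cong \bA^5\times V$ with $V$ the \emph{singular} hypersurface $y^2=4xw^2$. The correct reason $\cD$ is irreducible is not that the square-root is globally determined, but simply that each $V_j$ is the image of an affine space (the coefficients of $r_j$) under the squaring map, hence irreducible as the image of an irreducible variety. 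Ramification is irrelevant to irreducibility. The paper organises this via ``segments'' $[a,b]\subseteq I$ of the Newton-polygon boundary and a four-case analysis depending on whether $a=0$ (leading coefficient of $q_{[a,b]}$ is $\rho_0=1$, pinning the sign of the monic square root) and/or $b=\mu$ (constant term is $\hat p_n^2$, so $\hat p_n$ itself is a free coordinate equal to the constant term of $r$). Only when a single segment has \emph{both} $a=0$ and $b=\mu$ --- which happens iff all $\delta_j$ are even --- does the monic square root fix $\sigma_n$ as a polynomial in $\rho_1,\dots,\rho_{\mu/2}$ and leave the genuine dichotomy $\hat p_n=\pm\sigma_n$.

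The misplaced appeal: Spaltenstein's Proposition 6.4 concerns the factorisation of characteristic polynomials of elements $t\phi$ with Richardson residue; it was used in Proposition \ref{p:localbaseD} to show $\chi(t\tp^\perp)\subseteq\fd$. But $\fd$ (and hence $\cD$) is defined directly by Newton-polygon and square conditions, with no reference to $\fg$ or to $p^{odd}$, so its irreducible-component structure should be --- and in the paper is --- analysed purely combinatorially from Definition \ref{def:d}. Your invocation of $p^{odd}$ to ``pin the leading coefficient'' does not correspond to any constraint actually present in the definition of $\fd$.
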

\begin{proof}
Let $\widetilde{\cA} = \bigoplus_{j=1}^n \Gamma( X , \Omega^{d_j}( d_j x ) )$. Define a subvariety $\cD' \subseteq \widetilde{\cA}$ as the global analogue of $t^{-1} \bullet \fd'$, that is we say $(c_2 , \dots , c_{2n-2} , p_n ) \in \cD'$ if $(t^2 c_2 , t^4c_4 , \dots , t^{2n-2}c_{2n-2} , t^n p_n)|_{\cO_x} \in \fd'$, where as above $\cO_x$ is the completed local ring of $X$ at $x$ and we choose an identification $\cO_x \cong \cO$. Then $\cD'$ is an affine subspace of $\widetilde{\cA}$ which may also be described as follows. Let $\tilde{m}_0 , \tilde{m}_1 , \dots , \tilde{m}_{2n-1}$ be the sequence given by:
\begin{equation*}
\underbrace{1,1, \dots , 1}_{\delta_1 \text{ times}}, \underbrace{2,2, \dots , 2}_{\delta_2 \text{ times}}, \dots , \underbrace{\mu,\mu, \dots , \mu}_{\delta_{\mu} -1 \text{ times}} , \mu/2.
\end{equation*}
Further, let $m_1 , m_2 , \dots , m_n$ be given by $m_j = m_{2j-1}$. Then
\begin{equation*}
\cD' = \bigoplus_{j=1}^n \Gamma( X , \Omega^{d_j}( (d_j - m_j)x ) ).
\end{equation*}
Let $(c_2 , \dots , c_{2n-2} , p_n ) \in \cD'$. For $\alpha = 0 , 1 , \dots , \mu$, let $\rho_{\alpha} \in \mathbb{C}$ be the $t^\alpha$-coefficient of $c_{\beta}$, where $\beta = 2n - \delta_1 - \delta_2 - \dots - \delta_\alpha$ and $c_{2n} = p_n^2$. In other words, $(\rho_0 , \rho_1 , \dots , \rho_\mu)$ are the coefficients of $\lambda^{2n} + c_2 \lambda^{2n-2} + \dots + c_{2n}$ lying on the boundary of the Newton polygon of slopes $-\delta_1 , -\delta_2 , \dots , -\delta_\mu$. Let $T : \cD' \to \mathbb{C}^\mu$ be the map $T(c_2 , \dots , p_n ) = ( \rho_1 , \dots , \rho_{\mu - 1} , \hat{p}_n)$, where $\hat{p}_n$ is the $t^{\mu/2}$-coefficient of $p_n$. Since $\rho_0 = 1$ and $\rho_\mu = \hat{p}_n^2$, we see that $(\rho_0 , \dots , \rho_\mu)$ can be recovered from $T(c_2 , \dots , p_n)$. Note also that $T$ is a linear map so we may choose a splitting 
\[
\cD' \cong \mathrm{Ker}(T) \oplus \mathrm{Im}(T) = \mathrm{Ker}(T) \oplus \mathbb{C}^\mu.
\] From the definition of $\cD\subseteq \cD'$, we see that $\cD \subseteq \cD'$ is given as the zero locus of a set of polynomial equations in $(\rho_1 , \dots , \rho_{\mu-1} , \hat{p}_n )$. Thus $\cD \cong \mathrm{Ker}(T) \times V$, where $V$ is a closed subvariety of $\mathbb{C}^{\mu}$. Thus to determine the irreducible components of $\cD$, we need only study the variety $V$.

From the definition of $\fd$ in Section \ref{s:TypeD}, we see that $V$ is defined by requiring that for each boundary segment of the Newton polygon with even slope, an associated polynomial is a square. This may be expressed in the following way. Let $I \subseteq \{ 0 ,1 , \dots , \mu \}$ be the set of integers $0 \le j \le \mu$ such that the corresponding coefficient $\rho_j$ lies on a segment of even slope. Note that $\rho_j$ corresponds to the vertex joining line segments of slopes $-\delta_{j-1}$ and $-\delta_j$. In other words, $j \in I$ if $\delta_{j-1}$ or $\delta_j$ is even. Define a {\em segment} of $I$ to be a consecutive sequence of integers $a , a+1 , \dots , b \in I$, such that $a-1 , b+1 \notin I$. Denote such a segment by $[a , b] \subseteq I$. Then $V$ is defined as follows. For each segment $[a , b] \subseteq I$, let $q_{[a,b]}(u) \in \mathbb{C}[u]$ be given by:
\begin{equation*}
q_{[a,b]}(u) = \rho_a u^{b-a} + \rho_{a+1} u^{b-a-1} + \dots + \rho_b.
\end{equation*}
Note that if $[a,b]$ is a segment, then $b > a$ and $b-a$ is even. The subvariety $V \subseteq \mathbb{C}^{\mu}$ is defined by requiring that each polynomial $q_{[a,b]}$ is a square. Suppose that $[a_1 , b_1] , [a_2 , b_2] , \dots , [a_r , b_r]$ are the distinct segments of $I$, ordered so that $a_1 < b_1 < a_2 < \dots < b_{r-1} < a_r < b_r$. It follows that $V$ can be decomposed into a product $V \cong V_1 \times \dots \times V_r \times \mathbb{C}^d$ for some $d$. More precisely, we define $V_j$ in the following way\footnote{Note that the Hitchin base is singular whenever there is an occurrence of case (1) or case (3).}:
\begin{enumerate}
\item{If $a_j \neq 0$ and $b_j \neq \mu$ then $V_j \subseteq \mathbb{C}^{b_j - a_j}$ is the set of $(\rho_{a_j} , \dots , \rho_{b_j})$ such that the corresponding polynomial $q_{[a_j , b_j]}(u)$ is a square. Such a $V_j$ is irreducible.}
\item{If $a_j = 0$ and $b_j \neq \mu$ then $j = 1$ and $V_1 \subseteq \mathbb{C}^{b_1 - 1}$ is the set of $(\rho_1 , \dots , \rho_{b_1} )$ such that $q_{[0 , b_1]}(u) = u^{b_1} + \rho_1 u^{b_1 - 1} + \dots + \rho_{b_1}$ is a square. Thus $V_1$ is irreducible, in fact isomorphic to an affine space.}
\item{If $a_j \neq 0$ and $b_j = \mu$ then $j = r$ and $V_r \subseteq \mathbb{C}^{\mu - a_r}$ is the set of $(\rho_{a_r} , \dots , \rho_{\mu - 1} , \hat{p}_n )$ such that $q_{[a_r , \mu]}(u) = \rho_{a_r} u^{\mu-a_r} + \dots + \rho_{\mu-1} u + \hat{p}_n^2$ is a square. Such a $V_{r}$ is irreducible.}
\item{If $a_j = 0$ and $b_j = \mu$, then $j=1$ and $I = [0 , \mu]$. This case occurs if and only if the $\delta_i$ are all even. In this case $V_1$ is the variety of points $(\rho_1 , \dots , \rho_{\mu-1} , \hat{p}_n )$ such that $u^{\mu} + \rho_1 u^{\mu-1} + \dots + \rho_{\mu-1} u + \hat{p}_n^2$ is a square. It is easy to see that $\rho_1 , \dots , \rho_{\mu/2}$ may be chosen arbitrarily and that $\rho_{\mu/2 + 1} , \dots , \rho_{\mu-1} , \hat{p}_n^2$ may be expressed as certain polynomials in $\rho_1 , \rho_2 , \dots , \rho_{\mu/2}$. In particular, we have an equation of the form
\begin{equation*}
\hat{p}_n^2 = f( \rho_1 , \rho_2 , \dots , \rho_{\mu/2} ),
\end{equation*}
for some polynomial $f \in \mathbb{C}[ \rho_1 , \rho_2 , \dots , \rho_{\mu/2}]$. We claim that $f = g^2$ for some $g \in \mathbb{C}[\rho_1 , \rho_2 , \dots , \rho_{\mu/2}]$. To see this, consider the condition for $u^{\mu} + \rho_1 u^{\mu-1} + \dots + \hat{p}_n^2$ to be a square:
\begin{equation*}
u^{\mu} + \rho_1 u^{\mu-1} + \dots + \hat{p}_n^2 = ( u^{\mu/2} + \sigma_1 u^{\mu/2 - 1} + \dots + \sigma_n)^2
\end{equation*}
for some $\sigma_1 , \dots , \sigma_n$. Expanding and equating coefficients, we see that $\sigma_1 , \sigma_2 , \dots , \sigma_n$ can be expressed as polynomials in $\rho_1 , \rho_2 , \dots , \rho_{\mu/2}$. In particular, $\sigma_n = g(\rho_1 , \dots , \rho_{\mu/2})$ for some $g \in \mathbb{C}[\rho_1 , \dots , \rho_{\mu/2} ]$. Equating $u^0$ coefficients now gives:
\begin{equation*}
\hat{p}_n^2 = \sigma_n^2 = g^2(\rho_1 , \rho_2 , \dots , \rho_{\mu/2} ).
\end{equation*}
Thus either $\hat{p}_n = g(\rho_1 , \dots , \rho_{\mu/2})$ or $\hat{p}_n = -g(\rho_1 , \dots , \rho_{\mu/2})$. This shows that $V_1$ has two irreducible components $V_1^+ , V_1^-$ which are interchanged under the map $(\rho_1 , \dots , \rho_{\mu-1} , \hat{p}_n) \mapsto (\rho_1 , \dots , \rho_{\mu - 1} , -\hat{p}_n )$. Moreover $V_1^+$ and $V_1^-$ are isomorphic to affine spaces, since on each of these components we can write $\rho_{\mu/2+1} , \dots , \rho_{\mu - 1} , \hat{p}_n$ as polynomials in $\rho_1 , \dots , \rho_{\mu/2}$.}
\end{enumerate}
Taking into consideration the above cases, we see that $V \cong V_1 \times V_1 \times \dots \times V_r \times \mathbb{C}^d$ and hence $\cD$ is irreducible, except when all the $\delta_j$ are all even, in which case there are two components which are exchanged by the map $(c_2 , \dots , c_{2n-2} , p_n) \mapsto (c_2 , \dots , c_{2n-2} , -p_n)$.
\end{proof}

\begin{thm}\label{t:basetypeD}
Suppose that not all $\delta_j$ are even. Then we have an equality
\begin{equation*}
\cA_{G,P} = \cD.
\end{equation*}
In the case that all the $\delta_j$ are even, then $\cA_{G,P}$ is one of the two irreducible components $\cD^+,\cD^-$ of $\cD$ (the two components are exchanged by the outer automorphism of $\mathfrak{so}_{2n}$).
\end{thm}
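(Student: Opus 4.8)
The plan is to run the same template as the proof of Theorem \ref{t:main2}: deduce the inclusion $\cA_{G,P}\subseteq\cD$ from the local Proposition \ref{p:localbaseD}, observe that both sides are (unions of) irreducible varieties, and then force equality by a dimension count against the formula $\dim\cA_{G,P}=(g-1)\dim G+\dim(G/P)$ of \cite{BKV}. For the inclusion: given $(E,\phi)\in\cM_{G,P}$, the germ of $\phi$ at $x$ lies in $\tp^\perp$, so by Proposition \ref{p:localbaseD} the germ of $h_{G,P}(E,\phi)$ at $x$ lies in $t^{-1}\bullet\fd$; since $\cD$ is by definition the locus of global sections with this local property and $\cD$ is Zariski closed, we get $h_{G,P}(\cM_{G,P})\subseteq\cD$ and hence $\cA_{G,P}\subseteq\cD$.

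Next I would record that $\cA_{G,P}$ is irreducible, being the closure of the image under $h_{G,P}$ of the irreducible stack $\cM_{G,P}$ (an affine bundle over the irreducible stack $\Bun_{G,P}$, which in turn fibres over $\Bun_G$ with fibre $G/P$). By Proposition \ref{p:irredcomponents}, $\cD$ is irreducible when some $\delta_j$ is odd, while if all $\delta_j$ are even then $\cD=\cD^+\cup\cD^-$ with the two irreducible components interchanged by the linear map $p_n\mapsto -p_n$; in the latter case the irreducible set $\cA_{G,P}$ is forced to lie inside one of $\cD^+,\cD^-$, and these two have equal dimension. Consequently, in all cases it suffices to prove that $\dim\cA_{G,P}$ equals the dimension of the ambient irreducible component of $\cD$, since a closed irreducible subvariety of an irreducible variety of equal dimension exhausts it. The parenthetical claim that the outer automorphism $\sigma$ of $\mathfrak{so}_{2n}$ exchanges $\cD^+$ and $\cD^-$ is already contained in Proposition \ref{p:irredcomponents}, and since $\sigma$ carries $\cM_{G,P}$ to $\cM_{G,\sigma(P)}$, once $\cA_{G,P}$ is identified with, say, $\cD^+$ we get $\cA_{G,\sigma(P)}=\cD^-$.

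It then remains to carry out the dimension computation. On one side, $\dim\cA_{G,P}=(g-1)\dim\mathfrak{so}_{2n}+\dim(G/P)$, where $\dim(G/P)=\frac{1}{2}(\dim\mathfrak{so}_{2n}-\dim\fl)=\frac{1}{2}\dim(G\cdot e)$ for the Richardson nilpotent $e$, and $\fl\cong\mathfrak{gl}_{r_1}\times\cdots\times\mathfrak{gl}_{r_k}\times\mathfrak{so}_{2s}$. On the other side, from the analysis in the proof of Proposition \ref{p:irredcomponents} one has $\cD'=\bigoplus_i\Gamma(X,\Omega^{d_i}((d_i-m_i)x))$ with $m_i=\tilde m_{2i-1}$ read off the staircase sequence attached to $(\delta_1,\dots,\delta_\mu)$, so Riemann--Roch (using $d_i\ge m_i$) gives $\dim\cD'=(g-1)\dim\mathfrak{so}_{2n}+\sum_i(d_i-m_i)$; and $\cD$ (resp.\ each $\cD^\pm$) is cut out inside $\cD'$ by the square conditions, which by Remark \ref{r:dimension} contribute total codimension $\sum_j(b_j-a_j)/2$ over the even-slope segments $[a_j,b_j]$ of the Newton polygon. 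Hence the theorem reduces to the purely combinatorial identity
\[
\sum_{i=1}^n(d_i-m_i)\;-\;\sum_j\frac{b_j-a_j}{2}\;=\;\dim(G/P),
\]
where $d_1,\dots,d_n$ are the degrees $2,4,\dots,2n-2,n$ of $\mathfrak{so}_{2n}$.

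This last identity is the step I expect to be the main obstacle. To verify it I would feed in the explicit Jordan type $(\delta_1,\dots,\delta_\mu)$ of the Richardson orbit of $\fp$ in terms of the block data $r_1,\dots,r_k,s$ via \cite{Spaltenstein1,Duckworth}, rewrite $\sum_i(d_i-m_i)$ and the square-condition codimension $\sum_j(b_j-a_j)/2$ as sums over the parts $\delta_j$ (using the bijection between the $\tilde m_{2i-1}$ and the decimated staircase built from the $\delta_j$, and the identification of the even-slope segments with the maximal runs of consecutive even parts), and match the total against $\dim(G/P)=\frac{1}{2}(\dim\mathfrak{so}_{2n}-\dim\mathfrak{z}_{\mathfrak{so}_{2n}}(e))$, using that for a Richardson nilpotent $e$ one has $\dim\mathfrak{z}_{\mathfrak{so}_{2n}}(e)=\dim\fl$. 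This is a finite but somewhat delicate bookkeeping exercise, precisely because the type $D$ peculiarities (even parts occurring in consecutive pairs, the role of the Pfaffian, the Veronese/square conditions) must all be tracked simultaneously; but once it is in place, all three assertions of the theorem follow at once, and the same argument, applied to the refined stratification by which component of $\cD$ is hit, pins down $\cA_{G,P}$ among $\cD^+,\cD^-$ in the very-even case.
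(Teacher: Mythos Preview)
Your proposal follows essentially the same template as the paper's proof: establish the inclusion $\cA_{G,P}\subseteq\cD$ from Proposition \ref{p:localbaseD}, invoke irreducibility of $\cA_{G,P}$ together with Proposition \ref{p:irredcomponents} for the component structure of $\cD$, and reduce everything to a dimension count against the formula $\dim\cA_{G,P}=(g-1)\dim G+\dim(G/P)$ from \cite{BKV}. The only substantive difference lies in how the remaining combinatorial dimension identity is verified. You propose to go back to the parabolic block data $(r_1,\dots,r_k,s)$ via the explicit description of Richardson Jordan types in \cite{Spaltenstein1,Duckworth} and then match against $\dim(G/P)=\tfrac12\dim(G\cdot e)$; the paper instead stays entirely in terms of the Jordan partition $(\delta_1,\dots,\delta_\mu)$ and its conjugate $(n'_1,\dots,n'_k)$, invoking the elementary partition identity $2\sum_j j\delta_j - 2n = \sum_j (n'_j)^2$ together with the Kraft--Procesi formula $\dim\fl = \tfrac12\sum_j(n'_j)^2 - n^{odd}$ from \cite{KP}. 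This shortcut is what makes the bookkeeping tractable: it reduces the identity to a few lines and never requires translating between the parabolic data and the Richardson Jordan type. Your route would work in principle, but the step you flag as ``the main obstacle'' is genuinely messier without these two inputs, so it is worth knowing they are available.
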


\begin{rem}\label{r:plusminus}
In the case that all the $\delta_j$ are even, there are two nilpotent orbits of $\mathfrak{so}_{2n}$ with Jordan blocks of sizes $\delta_1 , \dots , \delta_\mu$. These are the Richardson orbits for two parabolic subalgebras $\fp_+ , \fp_-$ and the two components of $\cD$ give the Hitchin bases for $\fp_+  \fp_-$. It is possible to determine precisely which parabolic corresponds to which component of $\cD$ using the results of \cite{Spaltenstein3}, however the details are quite complicated.
\end{rem}

\begin{proof}
By Proposition \ref{p:localbaseD}, we have inclusions $\cA_{G,P} \subseteq \cD \subseteq \widetilde{\cA}$. From \cite{BKV}, we have that $\cA_{G,P}$ is a closed irreducible subvariety of $\widetilde{\cA}$ of dimension $\dim( \Bun_{G,P} ) = (g-1)\dim(G) + \dim(G/P)$. By Proposition \ref{p:irredcomponents}, it suffices to show that $\cA_{G,P}$ is an irreducible component of $\cD$. Thus it suffices to show that $\dim \left( \cD  \right) = \dim( \Bun_{G,P}) = \dim( \Bun_G ) + \dim( \mathfrak{n})$. Let $\mathcal{B} = \bigoplus_{j=1}^n \Gamma( X , \Omega^{d_j} )$. Then $\dim( \mathcal{B}) = \dim(\Bun_G)$ and $\mathcal{B} \subseteq \cD$. Thus we only need to show that the codimension of $\mathcal{B}$ in $\cD$ is $\dim(\mathfrak{n})$. Define $\fd^{tr}$ and $\mathfrak{b}^{tr}$ as in the proof of Proposition \ref{p:localbaseD}. Then $\fd^{tr} \subseteq \mathfrak{b}^{tr}$. To show that the codimension of $\mathcal{B} \subseteq \cD$ is $\dim(\mathfrak{n})$ is equivalent to showing that the codimension of $\fd^{tr} \subseteq \mathfrak{b}^{tr}$ is
\begin{equation*}
\sum_{j=1}^n d_j - \dim(\mathfrak{n}) = (2 + 4 + \dots + 2n-2) + n - \dim(\mathfrak{n}) = n^2 - \dim(\mathfrak{n}).
\end{equation*}
Let $\delta_1 \ge \delta_2 \ge \dots \ge \delta_{\mu}$ be the Jordan blocks of the Richardson nilpotent orbit associated to $\fp$. Recall that for each $j$, $\delta_{2j-1}$ and $\delta_{2j}$ are either both even or both odd. Let $n^{ev}$ be the number of such pairs for which $\delta_{2j-1},\delta_{2j}$ are even and $n^{odd}$ the number of such pairs where $\delta_{2j-1},\delta_{2j}$ are odd. So $\mu/2 = n^{ev} + n^{odd}$. Let $(\fd')^{tr}$ be the subset of $\mathfrak{b}^{tr}$ consisting of $(c_2 , c_4 , \dots , c_{2n-2} , p_n ) \in \mathfrak{b}^{tr}$ which lie in the Newton polygon with slopes $-\delta_1 , -\delta_2 , \dots , -\delta_{\mu}$ (in other words, $(\fd')^{tr}$ is the image of $\fd'$ under the projection $\pi : \mathfrak{b} \to \mathfrak{b}^{tr}$ defined in the proof of Proposition \ref{p:localbaseD}). We have a sequence of inclusions
\begin{equation*}
\fd^{tr} \subseteq (\fd')^{tr} \subseteq \mathfrak{b}^{tr}.
\end{equation*}
From Section \ref{s:TypeD} we see that $\fd^{tr} \subseteq (\fd')^{tr}$ is defined by the vanishing of $n^{ev}$ independent polynomial equations (independent in the sense that their differentials are generically linearly independent). Thus $\fd^{tr}$ is an affine subvariety of $(\fd')^{tr}$ of codimension $n^{ev}$. So we are reduced to showing that the codimension of $(\fd')^{tr}$ in $\mathfrak{b}^{tr}$ is $n^2 - \dim(\mathfrak{n}) - n^{ev}$. Let $m_1 , m_2 , \dots , m_n$ be defined as in the proof of Proposition \ref{p:irredcomponents}. Then by the definition of $(\fd')^{tr}$, one sees that its codimension in $\mathfrak{b}^{tr}$ is exactly
\begin{equation*}
\sum_{j=1}^n m_j = \frac{1}{2} \sum_{j=1}^{\mu} j \delta_j  + \frac{1}{2} n^{odd}  -\frac{\mu}{2}.
\end{equation*}
Let $n'_1 \ge n'_2 \ge \dots \ge n'_k$ be the conjugate partition of $\delta_1 \ge \dots \ge \delta_\mu$ and recall the following identity relating a partition of $2n$ and its conjugate:
\begin{equation}\label{e:dim1}
-2n + 2\sum_{j=1}^{\mu} j \delta_j = (n'_1)^2 + (n'_2)^2 + \dots + (n'_k)^2.
\end{equation}
Recall also from \cite{KP} that the dimension of the Levi $\fl \subset \fp$ in type $D_{2n}$ is given by:
\begin{equation}\label{e:dim2}
\dim( \fl ) = \frac{1}{2} \sum_j (n'_j)^2 - n^{odd}.
\end{equation}
Combining Equations (\ref{e:dim1}) and (\ref{e:dim2}), we see that:
\begin{equation*}
\begin{aligned}
\frac{1}{2} \sum_{j=0}^{\mu} j \delta_j  + \frac{1}{2} n^{odd}  -\frac{\mu}{2} &= \frac{ \dim( \fl ) + n^{odd} }{2} + \frac{n}{2} + \frac{n^{odd}}{2} - \frac{\mu}{2} \\
&= \frac{ \dim( \fl ) + n }{2} + n^{odd} - \frac{\mu}{2} \\
&= \frac{ \dim( \mathfrak{so}_{2n}) - 2\dim( \mathfrak{n} ) + n }{2} + n^{ev} \\
&= n^2 - \dim( \mathfrak{n} ) + n^{ev}.
\end{aligned}
\end{equation*}
This shows that $\cD$ has the correct dimension and hence $\cA_{G,P}$ is an irreducible component of $\cD$.
\end{proof}

\section{Completion of the proof of the main local results}\label{s:completion}

Recall that $\fa_{\fg,\fp} \subseteq \bigoplus_{j=1}^n t^{-d_j} \cO$. We define a space $\hat{\fa}_{\fg,\fp} \subseteq \bigoplus_{j=1}^n t^{-d_j} \cO$ as follows. If Assumption \ref{ass:main2} holds, then we set
\[
\hat{\fa}_{\fg,\fp} = \bigoplus_{j=1}^n t^{-d_j+m_j} \cO.
\]
Suppose we are in type $D$. Let $\delta_1 , \dots , \delta_\mu$ be the sizes of the Jordan blocks of the Richardson nilpotent orbit corresponding to $\fp$. If the $\delta_j$ are not all even, we let $\hat{\fa}_{\fg,\fp} = t^{-1} \bullet \fd$, where $\fd$ is defined as in Section \ref{s:TypeD}. In the special case where all the $\delta_j$ are even, recall from Theorem \ref{t:basetypeD} that $\cD = \cD^+ \cup \cD-$ is a union of two irreducible components. There is a corresponding local version of this statement, namely $t^{-1} \bullet \fd = t^{-1} \bullet \fd^+ \cup t^{-1} \bullet \fd^-$. It is not hard to see that we have either $\chi(\tp^\perp) \subseteq t^{-1} \bullet \fd^+$ or $\chi(\tp^\perp) \subseteq t^{-1} \bullet \fd^-$ depending on whether $\fp = \fp_+$ or $\fp_-$, where $\fp_\pm$ are the two parabolics whose corresponding Richardson orbit has Jordan blocks $\delta_1 , \dots , \delta_\mu$ (cf. Remark \ref{r:plusminus}). We then let $\hat{\fa}_{\fg,\fp}$ be whichever of $t^{-1} \bullet \fd^\pm$ satisfies $\chi(\tp^\perp) \subseteq t^{-1} \bullet \fd^\pm$. In the case that we are in type $D$ and Assumption \ref{ass:main2} holds, then one can easily check that both these definitions give the same space.
\begin{thm}
We have an equality $\fa_{\fg,\fp} = \hat{\fa}_{\fg,\fp}$. In particular, if Assumption \ref{ass:main2} holds, then 
\[
\fa_{\fg,\fp} = \bigoplus_{j=1}^n t^{-d_j+m_j} \cO.
\]
\end{thm}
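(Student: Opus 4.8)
The plan is to establish the two inclusions separately. The inclusion $\fa_{\fg,\fp}\subseteq\hat{\fa}_{\fg,\fp}$ is already in hand: under Assumption~\ref{ass:main2} it is the inclusion $\subseteq$ of Theorem~\ref{t:main}, proved by the explicit local computations of Sections~\ref{s:TypeA} and \ref{s:Types}; for an arbitrary parabolic in type $D$ it is Proposition~\ref{p:localbaseD}, together with the observation recorded above that $\chi(\tp^\perp)$ lies in one of $t^{-1}\bullet\fd^+$, $t^{-1}\bullet\fd^-$ when all the $\delta_j$ are even. So the real content is the reverse inclusion $\hat{\fa}_{\fg,\fp}\subseteq\fa_{\fg,\fp}$, and by the definition of the pro-variety topology (Remark~\ref{r:topology}) it is enough to prove $\hat{\fa}_{\fg,\fp}/t^k\cO\subseteq\fa_{\fg,\fp}/t^k\cO$ for every $k\ge 0$.

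To prove this I would feed the global results back into the local problem, using crucially the fact that $\fa_{\fg,\fp}$ and $\hat{\fa}_{\fg,\fp}$ depend only on $(\fg,\fp)$, so that for each fixed truncation level $k$ we may choose the curve $X$ and its genus $g$ at will. Fix $k$, take $g$ large (to be specified), a smooth projective curve $X$ of genus $g$, a marked point $x\in X$, and an identification $\cO_x\cong\cO$; let $\varrho_k:\bigoplus_{i=1}^n\Gamma(X,\Omega^{d_i}(d_i.x))\to\bigoplus_{i=1}^n t^{-d_i}\cO/t^k\cO$ be the $\bC$-linear map sending a tuple of sections to the reduction mod $t^k$ of its germ at $x$. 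The first step is to check that $\varrho_k(\cA_{G,P})\subseteq\fa_{\fg,\fp}/t^k\cO$: since the germ at $x$ of any parabolic Higgs field $(\cE,\phi)\in\cM_{G,P}$ lies in $\tp^\perp$ and the formation of the invariants $Q_i$ commutes with restriction to the subscheme $kx$, one has $\varrho_k\!\big(h_{G,P}(\cM_{G,P})\big)\subseteq\chi(\tp^\perp)/t^k\cO$; and $\varrho_k$, being a morphism of affine spaces, carries the Zariski closure $\cA_{G,P}$ of $h_{G,P}(\cM_{G,P})$ into the closure $\fa_{\fg,\fp}/t^k\cO$ of $\chi(\tp^\perp)/t^k\cO$.

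The second step is to compute $\varrho_k(\cA_{G,P})$. Under Assumption~\ref{ass:main2}, Theorem~\ref{t:main2} gives $\cA_{G,P}=\bigoplus_i\Gamma(X,\Omega^{d_i}((d_i-m_i).x))$, whose germs at $x$ lie in $\bigoplus_i t^{-d_i+m_i}\cO$; thus $\varrho_k(\cA_{G,P})\subseteq\hat{\fa}_{\fg,\fp}/t^k\cO$, with equality as soon as the restriction map from the global sections of each $\Omega^{d_i}((d_i-m_i).x)$ to its $k$-jet at $x$ is surjective — which holds once the relevant first cohomology groups vanish, and hence for $g$ large by Riemann--Roch. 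Combining the two steps yields $\hat{\fa}_{\fg,\fp}/t^k\cO=\varrho_k(\cA_{G,P})\subseteq\fa_{\fg,\fp}/t^k\cO$; as $k$ is arbitrary this proves the theorem in this case. For an arbitrary parabolic in type $D$ the same scheme works with Theorem~\ref{t:basetypeD} and Proposition~\ref{p:irredcomponents} in place of Theorem~\ref{t:main2}: there $\cA_{G,P}\subseteq\cD\subseteq\cD'$ with $\cD'=\bigoplus_j\Gamma(X,\Omega^{d_j}((d_j-m_j).x))$ an affine space, and $\cD$ is cut out of $\cD'$ by the condition that finitely many ``Newton polygon edge'' coefficients lie in the variety $V$ of Proposition~\ref{p:irredcomponents}; since $\varrho_k$ carries those edge coefficients isomorphically, independently of $g$, while surjecting onto the remaining coefficients up to level $k$ for $g$ large, one again obtains $\varrho_k(\cA_{G,P})=\hat{\fa}_{\fg,\fp}/t^k\cO$, the correct component $t^{-1}\bullet\fd^\pm$ being forced by the inclusion of the first step together with the fact that $\chi(\tp^\perp)$ already lies in a single component (cf.\ Remark~\ref{r:plusminus}).

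The step I expect to be the main obstacle is the compatibility of $k$-jet restriction with Zariski closure used in the first step — this is the one point at which the pro-variety formalism of Remark~\ref{r:topology} is genuinely needed, since it is what allows the passage from the single finite-dimensional closure defining $\cA_{G,P}$ to the level-wise closures defining $\fa_{\fg,\fp}$. The other ingredients are routine: the global dimension identity is already established in \cite{BKV} and packaged in Theorems~\ref{t:main2} and \ref{t:basetypeD}, and the surjectivity of restriction onto jets for high-genus curves is a standard Riemann--Roch estimate. In type $D$ one must in addition track which irreducible component of $\cD$ (equivalently, which sign of the Pfaffian) occurs, but this is pinned down at each stage by the inclusions above and the analysis of \S\ref{s:TypeD}.
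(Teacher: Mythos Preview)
Your proposal is correct and follows essentially the same strategy as the paper: feed the global description of $\cA_{G,P}$ back into the local problem by truncating modulo $t^k$ and using Riemann--Roch to make the localisation map surjective, so that the global image fills out $\hat{\fa}_{\fg,\fp}/t^k\cO$ at every finite level.

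The one technical difference worth flagging is how you achieve surjectivity of the jet map. You let the genus $g$ grow with $k$, relying on the fact that $H^1(X,\Omega^{d_i}(-k.x))$ vanishes once $(d_i-1)(2g-2)>k$. The paper instead keeps $g>1$ fixed and introduces $m>N$ auxiliary marked points $y_1,\dots,y_m$, each carrying a Borel reduction; the twist by $d_j(y_1+\dots+y_m)$ then forces the degree high enough for Riemann--Roch to give surjectivity of $\theta:\cB\to\fb(N)$. Since the Hitchin base at a Borel point is fully understood ($m_i=1$), these extra points impose no new constraints beyond those at $x$, and the argument runs exactly as yours does. Both devices accomplish the same thing; your version is arguably cleaner in that it avoids invoking the multi-point version of Theorems~\ref{t:main2} and \ref{t:basetypeD}, while the paper's version has the minor advantage of working over a single fixed curve. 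Your handling of the closure compatibility in Step~1 and of the two-component issue in type $D$ (via the inclusion already established for $\chi(\tp^\perp)$) matches the paper's reasoning.
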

The rest of this section is about proving this theorem. 
From Sections \ref{s:TypeA}, \ref{s:Types} and Proposition \ref{p:localbaseD}, we have shown an inclusion
\[
\chi( \tp^\perp) \subseteq \hat{\fa}_{\fg,\fp}.
\]
We argue that $\hat{\fa}_{\fg,\fp}$ is the closure of $\chi( \tp^\perp )$ in the following way. We fix a positive integer $N$ and work modulo $t^N$. More precisely, define $\fb(N)$ to be
\[
\fb(N) = \bigoplus_{j=1}^n \left( t^{-d_j} \cO / t^N \cO \right).
\]
There is a natural surjection $\pi : \bigoplus_{j=1}^n t^{-d_j} \cO \to \fb(N)$ and we may consider the image $\hat{\fa}_{\fg,\fp}(N)$ of $\hat{\fa}_{\fg,\fp}$ under $\pi$, in other words, we consider $\hat{\fa}_{\fg,\fp}$ modulo $t^N$. Note that $\fb(N)$ and $\hat{\fa}_{\fg,\fp}(N)$ are finite dimensional affine varieties. The composition $\pi \circ \chi$ gives a map
\[
\pi \circ \chi : \tp^\perp \to \hat{\fa}_{\fg,\fp}(N).
\]
In view of Remark \ref{r:topology}, we are reduced to the following:

\begin{lem}\label{l:dense}
For each positive integer $N$, the image of the composition
\[
\pi \circ \chi : \tp^\perp \to \hat{\fa}_{\fg,\fp}(N)
\]
is dense in $\hat{\fa}_{\fg,\fp}(N)$.
\end{lem}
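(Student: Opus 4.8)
The plan is to reduce the density statement to a dimension count that we have already carried out globally, and then transfer that count back to the local (truncated) setting. Fix $N$. Since $\hat{\fa}_{\fg,\fp}(N)$ is a finite-dimensional affine variety that is irreducible (being either an affine space or, in the bad type $D$ case, a single chosen irreducible component $t^{-1} \bullet \fd^\pm$, which is irreducible by Proposition \ref{p:irredcomponents}), and since $\pi \circ \chi(\tp^\perp) \subseteq \hat{\fa}_{\fg,\fp}(N)$ is a constructible set (image of a morphism of finite-type schemes, after noting $\pi\circ\chi$ factors through the finite-dimensional quotient $\tp^\perp/t^{N'}\fg(\cO)$ for $N' \gg N$), it suffices to show that this image has dimension equal to $\dim \hat{\fa}_{\fg,\fp}(N)$. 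So the whole lemma is an equality of dimensions.

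First I would make the factorisation precise: for $N'$ large enough depending on $N$, the composite $\pi\circ\chi$ descends to a polynomial map $\overline{\chi}: \tp^\perp/t^{N'}\fg(\cO) \to \hat{\fa}_{\fg,\fp}(N)$ between finite-dimensional affine spaces (the target embedded in $\fb(N)$), exactly as in the truncation argument in the proof of Proposition \ref{p:localbaseD}. Then $\dim(\operatorname{image})$ equals $\dim \bigl(\tp^\perp/t^{N'}\fg(\cO)\bigr)$ minus the generic fibre dimension of $\overline\chi$. The plan is to identify this generic fibre dimension and show the bookkeeping matches $\dim\hat{\fa}_{\fg,\fp}(N)$, using that $\dim\hat{\fa}_{\fg,\fp}(N)$ grows linearly in $N$ with the slope $\sum_i (d_i - m_i + \text{const})$ dictated by the description of $\hat{\fa}_{\fg,\fp}$. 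The key input I would invoke is the global result, Theorem \ref{t:main2} (and Theorem \ref{t:basetypeD} in the bad $D$ case): we already know $\cA_{G,P} = \bigoplus_i \Gamma(X,\Omega^{d_i}((d_i-m_i)x))$ (resp.\ $\cA_{G,P} = \cD^\pm$), which pins down the global dimension via \eqref{eq:dim}. The strategy is to localise this: the germ-at-$x$ map $\cM_{G,P} \to \tp^\perp$ (and compatibly $\cA_{G,P} \to $ germs of invariants) lets one compare the image of the global Hitchin map, truncated at $x$ to order $N$, with the image of $\pi\circ\chi$. Since the Zariski closure of $h_{G,P}(\cM_{G,P})$ is $\cA_{G,P}$, its truncation to order $N$ at $x$ is dense in the order-$N$ truncation of $\cA_{G,P}$, which is exactly $\hat{\fa}_{\fg,\fp}(N)$ by the definitions in this section; and the germ at $x$ of a global Higgs field is an element of $\tp^\perp$, so this truncated global image is contained in $\pi\circ\chi(\tp^\perp)$. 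Hence $\pi\circ\chi(\tp^\perp)$ is dense in $\hat{\fa}_{\fg,\fp}(N)$.

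Concretely the steps are: (1) record that $\pi\circ\chi$ factors through a finite-dimensional quotient and has constructible image in the irreducible variety $\hat{\fa}_{\fg,\fp}(N)$, so density $\iff$ full dimension; (2) set up the germ-at-$x$ maps relating $\Bun_{G,P}$, $\cM_{G,P}$, $\cA_{G,P}$ to $\tp^\perp$ and $\hat{\fa}_{\fg,\fp}$, checking compatibility with truncation mod $t^N$; (3) use that $\cA_{G,P}$ is the Zariski closure of the global Hitchin image (definition) together with Theorem \ref{t:main2}/\ref{t:basetypeD} to conclude the order-$N$ truncation at $x$ of $h_{G,P}(\cM_{G,P})$ is dense in $\hat{\fa}_{\fg,\fp}(N)$; (4) observe this truncated global image lands inside $\pi\circ\chi(\tp^\perp)$, since germs of parabolic Higgs fields lie in $\tp^\perp$, giving the density of $\pi\circ\chi(\tp^\perp)$.

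The main obstacle I anticipate is step (2)–(3): making the surjectivity of "global germ at $x$, truncated mod $t^N$" onto $\hat{\fa}_{\fg,\fp}(N)$ genuinely rigorous. One must be sure that every element of $\hat{\fa}_{\fg,\fp}(N)$ is approximated (mod $t^N$) by $h_{G,P}$ of an \emph{actual} parabolic Higgs bundle — i.e.\ that the local constraints cutting out $\hat{\fa}_{\fg,\fp}$ are exactly the obstructions and there are no further global constraints beyond the known dimension count. This is where \eqref{eq:dim} from \cite{BKV} does the heavy lifting: it forces $\dim\cA_{G,P}$ to be as large as possible, and matching this against $\dim\hat{\fa}_{\fg,\fp}$ (which we can compute from the explicit description, using Riemann–Roch in the good case and the codimension computation in Theorem \ref{t:basetypeD} in the bad case) leaves no room for a proper closed image. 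A secondary technical point is handling the sign-component subtlety in the all-even type $D$ case, but Proposition \ref{p:irredcomponents} already isolates the correct component $t^{-1}\bullet\fd^\pm$, so the irreducibility needed in step (1) holds there too.
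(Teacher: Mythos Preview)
Your overall architecture --- use the global Hitchin image, take germs at $x$, truncate, and sandwich $(\pi\circ\chi)(\tp^\perp)$ between that truncated global image and $\hat{\fa}_{\fg,\fp}(N)$ --- is exactly the paper's strategy. But there is a genuine gap at the point you yourself flag as the ``main obstacle'': step (2)--(3).

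The problem is quantitative. With a single marked point $(P,x)$, the global base $\cA_{G,P}$ has \emph{fixed} finite dimension $(g-1)\dim G+\dim(G/P)$, while $\dim\hat{\fa}_{\fg,\fp}(N)$ grows linearly in $N$. So for $N$ large the truncation map $\cA_{G,P}\to\hat{\fa}_{\fg,\fp}(N)$ cannot possibly have dense image, and the dimension formula \eqref{eq:dim} does nothing to help here --- it only matches a \emph{global} dimension, not the local jet spaces. Your sentence ``leaves no room for a proper closed image'' is therefore false as stated: for large $N$ the image \emph{is} proper, simply by counting.

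The missing idea, which the paper supplies, is to enlarge the global moduli problem by adjoining auxiliary marked points $y_1,\dots,y_m$ (with Borel reductions) away from $x$. Choosing $m>N$ makes the ambient global base $\cB=\bigoplus_j\Gamma(X,\Omega^{d_j}(d_j(x+y_1+\cdots+y_m)))$ large enough that the localisation map $\theta:\cB\to\fb(N)$ is \emph{surjective} (a Riemann--Roch computation). Because the Borel constraints at the $y_i$ impose no local condition at $x$, the closure of the enlarged Hitchin image is exactly $\theta^{-1}(\hat{\fa}_{\fg,\fp}(N))$, and the Cartesian square then forces $\theta|_{\cA}$ to surject onto $\hat{\fa}_{\fg,\fp}(N)$. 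From there your sandwich argument (step (4)) finishes the proof. Once you insert this auxiliary-points trick, your outline becomes the paper's proof.
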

\begin{proof}
Our proof uses global methods. Let $X$ be a smooth complex projective curve of genus $g > 1$ and let $x \in X$ be a marked point. Fix also an isomorphism of local rings $\cO_x \cong \cO$, where $\cO_x$ is the completed local ring of $X$ at $x$. Let $m$ be a positive integer and $y_1, y_2 , \dots , y_m$ be $m$ distinct points of $X - x$. Let $B$ be a Borel subgroup of $G$. We consider the stack 
\[
\cY = \cM_{G , (P,x) , (B,y_1) , \dots , (B ,y_m)}
\]
of parabolic $G$-Higgs bundles on $X$ with reduction to $P$ at $x$ and reductions to $B$ at the remaining marked points $y_1 , \dots , y_m$. The Hitchin map for $\cY$ has the form
\[
h : \cY \to \cB = \bigoplus_{j=1}^n \Gamma( X, \Omega^{d_j}( d_j(x+y_1 + \dots + y_m))).
\]
Let $\cA \subset \cB$ denote the Zariski closure of the image of $h$. We can describe $\cA$ by relating it to its local counterpart as follows. 
Let 
\[
\theta : \cB \to \fb(N)
\]
be the natural localisation map. In other words, this is the map
\[
\theta : \bigoplus_{j=1}^n \Gamma( X, \Omega^{d_j}( d_j(x+y_1 + \dots + y_m))) \to \bigoplus_{j=1}^n \left( t^{-d_j} \cO / t^N \cO \right)
\]
which for each $j$, takes a section of $\Omega^{d_j}( d_j(x+y_1 + \dots + y_m))$ and restricts it to its germ at $x$ in $t^{-d_j} \cO$, taken modulo $t^N$. Then from the definition of $\hat{\fa}_{\fg,\fp}$ and from Theorems \ref{t:main2} and \ref{t:basetypeD}, it follows that 
\[
\cA = \{ \alpha \in \cB \; | \; \theta(\alpha) \in \hat{\fa}_{\fg,\fp}(N) \}.
\]
Thus, we have a Cartesian square
\begin{equation*}\xymatrix{
\cA \ar[r] \ar[d]^-{\theta|_{\cA}} & \cB \ar[d]^-\theta \\
\hat{\fa}_{\fg,\fp}(N) \ar[r] & \fb(N)
}
\end{equation*}
where the horizontal arrows are inclusions. The kernel of $\theta : \cB \to \fb(N)$ is exactly the sections $(b_1 , \dots , b_n) \in \bigoplus_{j=1}^n \Gamma( X , \Omega^{d_j}( d_j(x+y_1+\dots + y_m)))$ such that for $1 \le j \le n$, we have that $b_j$ vanishes to order $N$ at $x$. That is,
\[
\mathrm{Ker}( \theta ) = \bigoplus_{j=1}^n \Gamma( X , \Omega^{d_j}( -Nx + d_j(y_1 + \dots + y_m) ) ).
\]
Now we choose $m$ such that $m > N$. In this case $\deg( \Omega^{d_j}( -Nx + d_j(y_1 + \dots + y_m) ) ) > 2g-2$, hence by Riemann-Roch, we find
\begin{equation*}
\begin{aligned}
\dim( \cB ) - \dim( \mathrm{Ker}(\theta)) &= \dim\left( \bigoplus_{j=1}^n \Gamma( X , \Omega^{d_j}( d_j(x+y_1+\dots + y_m))) \right) \\
& \; \; \; \; - \dim \left( \bigoplus_{j=1}^n \Gamma( X , \Omega^{d_j}( -Nx + d_j(y_1 + \dots + y_m) ) ) \right) \\
&= nN + \sum_{j=1}^n d_j \\
&= \dim( \fb(N) ).
\end{aligned}
\end{equation*}

In particular, we see that $\theta$ is surjective. Since the diagram is Cartesian, it follows that $\theta|_\cA$ is also surjective. Now by our main global results, $h(\cY)$ is dense in $\cA$; thus, $\theta(h(\mathcal{Y}))$ is dense in $\theta(\cA)=\hat{\fa}_{\fg,\fp}(N)$. On the other hand, by definition, we have the inclusions 
\[
 \theta(h(\cY))\subset (\pi\circ \chi)(\tp^\perp)\subset \hat{\fa}_{\fg,\fp}(N).
\]
 Thus $(\pi\circ \chi)(\tp^\perp)$ is also dense in $\hat{\fa}_{\fg,\fp}(N)$.
\end{proof}


\begin{bibdiv}
\begin{biblist}

\bib{BKV}{article}
  {
    AUTHOR = {Baraglia, D.},
    Author={Kamgarpour, M.}, 
    Author={Varma, R.}
    TITLE = {Complete integrability of the parahoric Hitchin map},
    Journal={arXiv:1608.05454},
      YEAR = {2017},
  }

\bib{BD}{article}
  {
    AUTHOR = {Beilinson, A.},
    Author={Drinfeld, V.}, 
    TITLE = {Quantization of Hitchin's integrable system and Hecke eigensheaves},
    Journal={www.math.uchicago.edu/~mitya/langlands/hitchin/BD-hitchin.pdf},
      YEAR = {1997},
  }

\bib{Boalch}{article}
  {
   AUTHOR = {Boalch, P. P.},
     TITLE = {Hyperkahler manifolds and nonabelian Hodge theory on (irregular) curves },
   JOURNAL = {Texte d'un expose à l'IHP, arXiv:1203.6607},
      YEAR = {2012},
    }

  \bib{CK}{article}
{
Author = {Chen, T-H.}
Author = {Kamgarpour, M.}
Title = {Preservation of depths in local geometric Langlands correspondence}
Year = {2017}
Journal = {Trans. Amer. Math. Soc.}
Volume = {369}
Number = {2}
Pages = {1345--1364}
}

  \bib{CM}{book}{
  Author = {Collingwood, D. H.}
  Author = {McGovern, W. M.}
  Title = {Nilpotent orbits in semisimple Lie algebras}
  Year = {1993}
  Publisher = {Van Nostrand Reinhold Co.}
}

  \bib{Duckworth}{article}
  {
  Author = {Duckworth, W. E.}
  Title = {Jordan blocks of Richardson classes in the classical groups and the Bala-Carter theorem}
  Journal = {Comm. Algebra}
  Volume = {33}
  Year = {2005}
  Number = {10}
  Pages = {3497--3514} 
  }

 \bib{GW}{article}
 {
 Author = {Gukov, S.}
 Author = {Witten, E.}
     TITLE = {Rigid surface operators},
   JOURNAL = {Adv. Theor. Math. Phys.},
  FJOURNAL = {Advances in Theoretical and Mathematical Physics},
    VOLUME = {14},
      YEAR = {2010},
    NUMBER = {1},
     PAGES = {87--177},
 }

\bib{Hitchin1}{article}
{
AUTHOR = {Hitchin, N.},
     TITLE = {The self-duality equations on a {R}iemann surface},
   JOURNAL = {Proc. London Math. Soc. (3)},
  FJOURNAL = {Proceedings of the London Mathematical Society. Third Series},
    VOLUME = {55},
      YEAR = {1987},
    NUMBER = {1},
     PAGES = {59--126},}

\bib{Hitchin}{article} 
{
Author={Hitchin, N.},
Title={Stable bundles and integrable systems},
Year={1987}
Journal = {Duke Math. J.}
Volume = {54}
Number = {1}
Pages = {91--114}
}

    \bib{HitchinG2}{article}
  {
    AUTHOR = {Hitchin, N.},
      TITLE = {Langlands duality and $G_2$ spectral curves},
      YEAR = {2007},
Journal = {Q. J. Math.}
Volume = {58}
Number = {3}
Pages = {319--344}
  }

  \bib{KL}{article}
  {
    AUTHOR = {Kazhdan, D.},
    Author={Lusztig, G.}, 
    TITLE = {Fixed point varieties on affine flag manifolds},
      YEAR = {1987},
      Journal = {Israel J. Math.}
      Volume = {62}
      Number = {2}
      Pages = {129--168}
  }

 \bib{KP}{article}
 {
 Author = {Kraft, H.}
 Author = {Procesi, C.}
 Title = {On the geometry of conjugacy classes in classical groups},
 Journal = {Comment. Math. Helv.}
 Volume = {57}
 Year = {1982}
 Number = {4}
 Pages = {539--602} 
 }

 \bib{LS}{article}{
AUTHOR = {Laszlo, Y.}, 
Author={Sorger, C.},
     TITLE = {The line bundles on the moduli of parabolic {$G$}-bundles over
              curves and their sections},
   JOURNAL = {Ann. Sci. \'Ecole Norm. Sup. (4)},
  FJOURNAL = {Annales Scientifiques de l'\'Ecole Normale Sup\'erieure.
              Quatri\`eme S\'erie},
    VOLUME = {30},
      YEAR = {1997},
    NUMBER = {4},
     PAGES = {499--525}
 }

\bib{LogaresMarten}{article}
{
	Title={Moduli of parabolic Higgs bundles and Atiyah algebroids},
	Author={Logares, M.},
	Author={Martens, J.},
	Journal={J. Reine Angew. Math.},
	Volume={2010},
	Number={649},
	Pages={89--116},
	Year={2010}
} 

\bib{Neukirch}{book}{
 AUTHOR = {Neukirch, J.},
     TITLE = {Algebraic number theory},
    SERIES = {Grundlehren der Mathematischen Wissenschaften [Fundamental
              Principles of Mathematical Sciences]},
    VOLUME = {322},
      NOTE = {Translated from the 1992 German original and with a note by
              Norbert Schappacher,
              With a foreword by G. Harder},
 PUBLISHER = {Springer-Verlag, Berlin},
      YEAR = {1999},}

 \bib{Ngo}{article}
  {
  author = {Ng\^o, B. C.}
  title = {Le lemme fondamental pour les alg\`ebres de Lie.}
  journal = {Publ. Math. Inst. Hautes \'Etudes Sci.}
  volume = {111}
  year = {2010}
  pages = {1--169}
  }

  \bib{SS}{article}
  {
  Author={Scheinost, P.},
  Author={Schottenloher, M.},
  Title={Metaplectic quantization of the moduli spaces of flat and parabolic bundles},
  Year={1995},
  Journal={J. Reine Angew. Math.}
  Volume={466}, 
  Pages={145--219} 
  }

\bib{Simpson90}{article}
{
    AUTHOR = {Simpson, Carlos T.},
     TITLE = {Nonabelian {H}odge theory},
 BOOKTITLE = {Proceedings of the {I}nternational {C}ongress of
              {M}athematicians, {V}ol.\ {I}, {II} ({K}yoto, 1990)},
     PAGES = {747--756},
 PUBLISHER = {Math. Soc. Japan, Tokyo},
      YEAR = {1991},
}
 
  \bib{Simpson}{article}
  {
  AUTHOR = {Simpson, C. T.},
     TITLE = {Higgs bundles and local systems},
   JOURNAL = {Inst. Hautes \'Etudes Sci. Publ. Math.},
  FJOURNAL = {Institut des Hautes \'Etudes Scientifiques. Publications
              Math\'ematiques},
    NUMBER = {75},
      YEAR = {1992},
     PAGES = {5--95},
   }

     \bib{SimpsonNonCompact}{article}
  {
  AUTHOR = {Simpson, C. T.},
     TITLE = {Harmonic bundles on non-compact curves},
   JOURNAL = {Journal of AMS},
    NUMBER = {3},
      YEAR = {1990},
     PAGES = {713--770},
   }

  \bib{Spaltenstein1}{book}
  {
  Author = {Spaltenstein, N.},
  Title = {Classes unipotentes et sous-groupes de Borel},
  Year = {1982},
  Series = {Lecture Notes in Mathematics, 946},
  Publisher = {Springer-Verlag},
  }
  
  \bib{Spaltenstein2}{article}
  {
  Author = {Spaltenstein, N.}
  Title = {Polynomials over local fields, nilpotent orbits and conjugacy classes in Weyl groups.}
  Journal = {Ast\'erisque}
  Volume = {168}
  Year = {1988}
  Pages = {191--217}
}  
  
  \bib{Spaltenstein3}{article}
  {
  Author = {Spaltenstein, N.}
  Title = {A note on the Kazhdan-Lusztig map for even orthogonal Lie algebras.}
  Journal = {Arch. Math. (Basel)}
  Volume = {55}
  Year = {1990}
  Number = {5}
  Pages = {431--437}
  }

 \bib{Rohith}{article} 
 {
 Title={Global nilpotent cone is isotropic: parahoric torsors on curves},
 Author={Varma, R.},
Journal={arXiv:1607.00735},
 Year={2016}	
  	}

  \bib{Zhu}{article}
  {
    AUTHOR = {Zhu, X.},
    TITLE = {Frenkel-Gross' irregular connection and Heinloth-Ng\^{o}-Yun's are the same},
      YEAR = {2017},
      Vol={27}, 
      	Pages={245--274},
Journal = {Selecta Math.}
  }

\end{biblist} 
\end{bibdiv} 
  \end{document}